\theoremstyle{plain}
\newtheorem{theorem}{Theorem}[section]
\newtheorem{untheorem}{Theorem}
\newcommand{\R}{\ensuremath{\mathbb{R_+}}}
\newtheorem{lemma}[theorem]{Lemma}
\newtheorem{proposition}[theorem]{Proposition}
\newtheoremstyle{remark}
    {} 
    {} 
    {}          
    {}          
    {\bfseries} 
    {.}         
    {.5em}      
    {}          
\theoremstyle{remark}
\newtheorem{remark}{Remark}[section]
\newcommand{\ROMAN}{\textls*[-100]}
\newtheoremstyle{example}
    {\dimexpr\topsep/2\relax} 
    {\dimexpr\topsep/2\relax} 
    {}          
    {}          
    {\bfseries} 
    {.}         
    {.5em}      
    {}          
\theoremstyle{example}
\newtheoremstyle{definition}
    {\dimexpr\topsep/2\relax} 
    {\dimexpr\topsep/2\relax} 
    {}          
    {}          
    {\bfseries} 
    {.}         
    {.5em}      
    {}          
\theoremstyle{definition}
\newtheorem*{similartheorem*}{Theorem \dualnumber{$'$}}
\numberwithin{equation}{section}
\newcommand{\N}{\mathbb N}
\def\R{\mathbb{R}}
\def\H{\mathbb{H}}
\def\Ha{\mathbb{H}^n}
\def\C{\mathbb{C}}
\newcommand{\Ca}{\mathbb{C}^n}
\newcommand{\inp}[2]{\langle #1, #2\rangle}
\newcommand{\im}{\mathop{\mathrm{Im}}}
\begin{document}
\title[Distances in sets of positive Kor\'anyi upper density in Heisenberg Group]{Distances in sets of positive Kor\'anyi upper density in Heisenberg Group}

\keywords{Heisenberg group, distance sets}
{\let\thefootnote\relax\footnote{\noindent 2010 {\it Mathematics Subject Classification.} Primary 42B20, 26A33; Secondary 43A85}}

\thanks{The second author has been supported by IISER Berhampur and GITAM University, Visakhapatnam.}

\author{K S Senthil Raani and Rajesh K. Singh}

\address[K S Senthil Raani]{IISER Berhampur, Laudigam, District Ganjam, Odisha, India-760003.}
\email{raani@iiserbpr.ac.in}

\address[Rajesh K. Singh]{Department of Mathematics and Statistics, Gitam Institute of Science, GITAM University, Visakhapatnam - 530045, A.P., India}
\email{agsinghraj12@gmail.com, rsingh4@gitam.edu}

\pagestyle{headings}

\begin{abstract}

We prove that any measurable set in the Heisenberg group, $\Ha$, of positive upper density has the property that all sufficiently large real numbers are realised as the Kor\'anyi distance between points in that set. The result can be seen as a Heisenberg group analogue to a corresponding Euclidean large distance set result in the $1986$ paper of Bourgain, \cite{1986Bourgain}.
 
Along the way, to prove our main theorem, we give the ``decay" of the coefficients $R_{k}(\lambda, \sigma)$, appearing in the spectral decomposition of the group Fourier transform, $\hat{\sigma}(\lambda)$ $= \sum_{k=0}^{\infty} R_{k}(\lambda, \sigma) \mathcal{P}_{k}(\lambda)$, of  the surface measure $\sigma$ on the Kor\'anyi sphere in $\Ha$, in a certain ``high frequency" region, that is, when $2(2k+n) |\lambda| \gg 1$; which seems to be new in the literature. We also show  that the positive upper density cannot be qualitatively improved further.
\end{abstract}

\maketitle



\section{Introduction} \label{intro-section}

The structure and richness of distance sets arising from subsets of metric spaces has been a focal point of investigation in geometric measure theory and harmonic analysis. Understanding distance sets has broad implications in analyzing geometric configurations, understanding patterns in large data sets, and studying the structure of fractals. A fundamental question, referred to as Falconer's distance problem, asks: for what $s$ must a set $E \subset \mathbb R^d$ of Hausdorff dimension $s$ ensure that  the set $\Delta_{\mathbb R^d}(E)$, of all Euclidean distances between points in $E$, has positive Lebesgue measure? This problem remains largely open, with known partial results improving incrementally over time. The behavior of analogous distance set problems in non-Euclidean settings remains comparatively underexplored (see for example, \cite{{BF24},{IM05}, {GGPP25}, {ILX22}} and references within for non-Euclidean set up other than Heisenberg group). It is quite interesting to ask for an analogue of Falconer distance set conjecture in the Heisenberg group set-up, given its foundational importance and the rich interplay with the geometry of sets. 

The basic tools to address distance set problems in Euclidean setup are, primarily, that the energy integral (see \cite{M15} for further details) of the sparse sets can be written in terms of the Fourier transform of the measures supported on these sets, and secondarily, the pointwise Fourier decay of the surface measure of the Euclidean sphere. It does not seem to be easy to observe any analogue of the energy integral in terms of the group Fourier transform of generic measures in the Heisenberg group and it is of future interest to the authors. Nevertheless in this article we attempt to state an analogue of the secondary tool, that is to find the right ``decay" of the coefficients in the spectral decomposition of the group Fourier transfrom of the surface measure on the Kor\'anyi sphere in the Heisenberg group (see \Cref{coeff. surface measure}). 

In line with earlier studies focusing on Kakeya-type or Besicovitch-type problems or other geometric measure theoretic problems in the Heisenberg group (see \cite{{BFMT12},{Harris23}, {BDFMT13}, {EJJ20}, {FO23}} and references within), this article aims to initiate the study of distance set problems in $\mathbb H^n$. We believe this will open pathways for further quantitative and structural investigations in the Heisenberg group set-up.

The classical theorem of Steinhaus \cite{Steinhaus} on the Euclidean space, $\R^d$, states that if $E \subset \R^d$ has positive Lebesgue measure, then the set $\Delta_{\mathbb R^d}(E)$ has non-empty interior. Steinhaus' theorem has led to a substantial development in a vast array of subsequent work. In particular, analyzing the distance sets $\Delta_{\mathbb R^d}(E)$  and other configuration sets of a similar nature involving generalized distance functions in the literature (see  \cite{{Piccard-1942}, {M15}, {KL06}} and references within). The generalized version of the Steinhaus' Theorem for any locally compact group $G$ with identity $e$ and a left Haar measure $\nu$ in \cite{Stromberg} states that {\emph{``If $A$ is a $\nu$-measurable subset of $G$ such that $0<\nu(A)<\infty$, then the set $A \cdot A^{-1}=\{yx^{-1}:x,y\in A)\}$ has $e$ in its interior."}}

In the Euclidean case, the quantitative version of Steinhaus' theorem was not evident from the proof in \cite{Steinhaus,Stromberg} and was studied in \cite[Lemma II]{Boardman-1970}. As an easy application, the author in \cite{Boardman-1970} observed that unbounded sets with asymptotic density away from $1/2$ has all  distances. In the Heisenberg group set-up, these results are easily observed. We state these results in \Cref{uniform distances of small sets}.  For completeness, we give the proof in \Cref{uniform distances of small sets proof}, similar to the proof described in \cite[Section 10]{Pramanik-Raani2023}.

The existence of sufficiently large distances in the Euclidean sets of positive upper density was studied independently by Bourgain \cite{1986Bourgain}, Falconer et al \cite{Falconer-Marstrand-1986} and Furstenberg, et al \cite{FKW-1990}. We state their result with a slight modification in their hypothesis as observed in \cite[Section 10.2]{Pramanik-Raani2023}:

\begin{untheorem}[\cite{{1986Bourgain}, {Falconer-Marstrand-1986},{FKW-1990}}] \label{all-large-dist-thm}
Suppose $d \geq 2$, and that $A \subseteq \mathbb R^d$ has positive upper density, that is,
\begin{equation} \label{Bourgain-liminf-condition} 
\limsup_{R \rightarrow \infty} \sup_{x \in \R^d} \frac{\lambda_d(A \cap B(x,R))}{\lambda_d(B(x,R))} > 0,
\end{equation} 
where $B(x,R)$ denotes a ball of radius $R$ centred at $x$ and $\lambda_d$ denotes the $d$-dimensional Lebesgue measure. Then the set $A$ contains all sufficiently large distances; namely, there exists $R_0 = R_0(A) > 0$ with the property that for every $R \geq R_0$, one can find $x, y \in A$ with $|x-y| = R$. Here, $|\cdot|$ denotes the Euclidean norm. 
\end{untheorem}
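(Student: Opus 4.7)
My plan is to follow the Fourier-analytic approach of Bourgain, exploiting the decay $|\widehat{\sigma_t}(\xi)| \lesssim (t|\xi|)^{-(d-1)/2}$ of the Fourier transform of the normalized surface measure $\sigma_t$ on the Euclidean sphere of radius $t$ in $\R^d$. This decay estimate is the Euclidean model for the spectral-coefficient decay on the Kor\'anyi sphere that the authors establish in the paper; the hypothesis $d \geq 2$ is precisely what makes the decay exponent $(d-1)/2$ positive.

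First I would extract, from the positive upper density condition \eqref{Bourgain-liminf-condition}, a parameter $\delta > 0$ and a sequence of balls $B_n = B(x_n, R_n)$ with $R_n \to \infty$ and $|A \cap B_n| \geq \delta|B_n|$. After translation assume $x_n = 0$ and set $f := \mathbf{1}_{A \cap B_n}$, so $\|f\|_1 = \|f\|_2^2 \geq \delta|B_n|$. For a target $t \geq R_0 = R_0(\delta, d)$, I would pick $n$ with $R_n$ large in a polynomial relation to $t$ and $\delta$ dictated below, and consider the nonnegative bilinear counting form
\[
J(t) \,:=\, \int_{\R^d} f(x)\,(f \ast \sigma_t)(x)\,dx \,=\, \int_{\R^d} |\hat f(\xi)|^2\, \widehat{\sigma_t}(\xi)\,d\xi,
\]
where the second equality is Plancherel (valid since $f$ is bounded and compactly supported). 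If $J(t) > 0$, then $\sigma_t(A - x) > 0$ for some $x \in A$, so $A \cap S(x,t) \neq \emptyset$, yielding a pair in $A$ at distance exactly $t$. It thus suffices to prove $J(t) > 0$ for every $t \geq R_0$.

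I would then split the Fourier integral at the threshold $|\xi| \sim 1/t$. On the low-frequency region $|\xi| \lesssim 1/t$, the Bessel expansion of $\widehat{\sigma_t}$ gives $\widehat{\sigma_t}(\xi) \geq \tfrac12$, while the Lipschitz estimate $|\hat f(\xi) - \hat f(0)| \leq 2\pi R_n \|f\|_1 |\xi|$ shows $|\hat f(\xi)| \gtrsim \delta R_n^d$ throughout $|\xi| \lesssim 1/R_n$. Provided $R_n \gtrsim t/\delta$, this smaller ball sits inside the low-frequency region and yields a main-term lower bound $J_{\mathrm{low}}(t) \gtrsim \delta^2 R_n^d$. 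On the high-frequency complement, I would apply the decay $|\widehat{\sigma_t}(\xi)| \lesssim (t|\xi|)^{-(d-1)/2}$ via a dyadic annular decomposition $|\xi| \sim 2^k/t$, balancing the $L^\infty$ bound $\|\hat f\|_\infty \leq \|f\|_1 = \delta R_n^d$ against Plancherel $\|\hat f\|_2^2 = \delta R_n^d$ on each annulus. This produces an error bound that, for $R_n$ chosen as a suitable power of $t/\delta$, falls below the main term and forces $J(t) > 0$.

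The main obstacle is the high-frequency estimate. A crude $L^\infty$ bound $|\widehat{\sigma_t}| \leq 1$ combined with Plancherel yields only $|J_{\mathrm{high}}| \lesssim \|f\|_2^2 = \delta R_n^d$, which matches but does not beat the main term $\delta^2 R_n^d$. The missing factor of $\delta$ must be extracted by the balanced dyadic interpolation described above, comparing the volume-type bound $|\hat f|^2 \leq \|f\|_1^2$ on each annulus with the global Plancherel identity, and invoking the full strength of the spherical decay exponent $(d-1)/2$ --- which is exactly where $d \geq 2$ enters essentially. The precise calibration of $R_n$ as a polynomial function of $t$ and $\delta$ to close the inequality is the quantitatively delicate step, and it is exactly this interplay between spatial localization, Fourier decay, and density that motivates the decay theorem the authors develop on the Heisenberg group.
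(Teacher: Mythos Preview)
Your single-scale scheme has a genuine gap at exactly the point you flag as ``the main obstacle.'' The balanced dyadic interpolation between $\|\hat f\|_\infty \le \|f\|_1$ and $\|\hat f\|_2^2 = \|f\|_1$ cannot recover the missing factor of $\delta$. Write $m := \|f\|_1 \simeq \delta R_n^d$. On the annulus $t|\xi| \sim 2^k$ the volume bound gives $\int |\hat f|^2 \lesssim m^2 (2^k/t)^d$ and Plancherel gives $\int |\hat f|^2 \le m$; the crossover is at $2^{k_*} \sim t \, m^{-1/d}$. If $R_n \gtrsim t$ (which you need for pairs at distance $t$ to sit inside $B_n$) then $k_* \lesssim 0$, so Plancherel is the better bound on every annulus $k \ge 0$, and your high-frequency sum is still $\sum_{k \ge 0} 2^{-k(d-1)/2} m \simeq m \simeq \delta R_n^d$ --- not $\delta^2 R_n^d$. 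A direct optimisation over $R_n$ as a power of $t/\delta$ shows the best achievable ratio of error to main term is $\gtrsim \delta^{-(d+1)/(2d)}$, which diverges as $\delta \to 0$. No polynomial choice of $R_n$ closes the inequality.

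The missing idea, which is precisely Bourgain's contribution and which the paper replicates in the Heisenberg setting, is an \emph{energy pigeonholing over many scales}. One does not try to beat $\delta R_n^d$ for a single $t$; instead one rescales to $E \subset B(0,1)$ with $|E| \ge \epsilon$, introduces a \emph{third} frequency region --- a ``medium'' annulus $\delta_0 \le r_j|\xi| \le 1/\delta_0$ --- and considers $J \simeq \epsilon^{-1}\log(1/\delta_0)$ lacunary candidate radii $r_1 > \cdots > r_J$. The medium annuli for different $j$ overlap only $O(\log(1/\delta_0))$ times, so Plancherel gives $\sum_{j=1}^{J} (\text{medium contribution at } r_j) \lesssim \log(1/\delta_0)\,|E|$, and pigeonholing produces a single $j$ with medium contribution $\lesssim \epsilon |E|$. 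The genuinely high frequencies ($r_j|\xi| > 1/\delta_0$) are then handled by the spherical decay and Plancherel, yielding $\lesssim \delta_0^{(d-1)/2}|E|$, which is small for $\delta_0$ chosen as a power of $\epsilon$. Your two-region split collapses the low and medium regions together, and without the multi-scale pigeonhole there is no mechanism to make the medium piece small.
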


In this article, our main goal is to find an anologue of \Cref{all-large-dist-thm} in the Heisenberg group set-up. Recall that the Heisenberg group, $\mathbb{H}^n$, is the two step nilpotent Lie group with underlying manifold $\mathbb{C}^n \times \mathbb{R}$ associated with the group law
\begin{equation}
(z,t) \cdot (w,s) := \left(  z + w, t + s + {\textstyle \frac{1}{2} }  \im(  z . \overline{w}) \right),\ \ \text{for all} \ \ (z,t), (w,s) \in \mathbb{H}^n,
\end{equation} 
where $z. \overline{w}= z_{1} \overline{w_{1}}+ \cdots + z_{n} \overline{w_{n}}$, for any $z,w \in \Ca$. We have a family of non-isotropic dilations defined by $\delta_{r}(z,t):=(rz,r^2t)$, for all $(z,t) \in \mathbb{H}^n$, for every $r>0$. The Kor\'anyi norm of $(z,t)$ in $\mathbb H^n$ is given by 
\begin{equation}
    |(z,t)|_K:=\left(|z|^4 + t^2\right)^{\frac14}.  \label{eq: Koranyi}
\end{equation} 
The Kor\'anyi norm is homogeneous of degree 1, that is $|\delta_{r}(z,t)|_K= r \,  |(z,t)|_K$.  The distance between two points $x$ and $y$ in $\mathbb H^n$  will be given by the left invariant metric defined by $d_K(x,y)=| y^{-1} \cdot x|_K$. 
The Haar measure on $\mathbb H^n$ is given by the Lebesgue measure $dz\ dt$ on $\mathbb R^{2n}\times \mathbb R$. The Kor\'anyi ball, $B(a,r)$, of radius $r>0$ centered at $a \in \Ha$ is $ a \cdot B(0,r) = \{x \in \mathbb H^n:|a^{-1} \cdot x|_K<r\} $. One has its measure $|B(a,r)| = C_{Q} \, r^{Q}$, where $Q=(2n + 2)$ is known as the homogeneous dimension of $\H^n$. Let $\Delta(E)=\{d_K(x,y):x,y\in E\}$ denote the distance set of $E\subset \mathbb H^n$. 

We define {\textbf{Kor\'anyi upper density}} of a set $A\subset \mathbb H^n$ as 
\begin{equation} \limsup_{R \rightarrow \infty} \sup_{x \in \mathbb H^n}  \frac{ |A \cap B(x,R)|  }{|B(x,R)|}.\label{defn Koranyi density}
\end{equation}
The main result of this paper is to show that, in the Heisenberg group, the set of positive Kor\'anyi upper density admits large enough distances:
\begin{theorem} \label{all-large-dist-thm Hn}
Suppose $n \geq 1$, and that $A \subseteq \mathbb H^n$ has positive Kor\'anyi upper density. Then the set $A$ contains all sufficiently large distances; namely, there exists $R_0 = R_0(A) > 0$ with the property that for every $R \geq R_0$, one can find $x, y \in A$ with $d_K(x,y)=|x \cdot y^{-1}|_K = R$.
\end{theorem}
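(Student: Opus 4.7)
The plan is to adapt Bourgain's averaging argument from \cite{1986Bourgain} to $\mathbb H^n$, substituting the operator-valued group Fourier transform together with the spectral decay of \Cref{coeff. surface measure} for the scalar Fourier decay of spherical surface measures that powers the Euclidean proof. Suppose for contradiction that $A \subseteq \mathbb H^n$ has positive Kor\'anyi upper density $\alpha > 0$ and that there is a sequence $R_j \to \infty$ with $R_j \notin \Delta(A)$. Using the density hypothesis together with the left-invariance of $d_K$, there exist points $y_j \in \mathbb H^n$ and radii $N_j$ with $N_j / R_j \to \infty$ such that the left translates $A_j := y_j^{-1} \cdot A$ satisfy $|A_j \cap B(0,N_j)| \geq (\alpha/2)\, |B(0,N_j)|$; since left translation is an isometry of $d_K$, still $R_j \notin \Delta(A_j)$. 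Writing $\sigma_R$ for the Kor\'anyi surface measure on $\{|x|_K = R\}$ (total mass $\asymp R^{Q-1}$), the pair-count $I_j := \int 1_{A_j}(x)\,(1_{A_j} \ast \sigma_{R_j})(x)\, dx$ vanishes identically, and the objective is to show $I_j > 0$ for $j$ large.

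Fix a small $0 < \epsilon \ll 1$ to be chosen later and decompose $\sigma_{R_j} = \eta_{R_j} + \psi_{R_j}$, where $\eta_{R_j} := \sigma_{R_j} \ast \phi_{\epsilon R_j}$ is the smoothing of $\sigma_{R_j}$ by a non-negative approximate identity at scale $\epsilon R_j$. Then $\eta_{R_j}$ is a bounded function supported in the Kor\'anyi annulus $\{R_j(1-C\epsilon) \leq |x|_K \leq R_j(1+C\epsilon)\}$ with $\|\eta_{R_j}\|_1 \asymp R_j^{Q-1}$ and $\|\eta_{R_j}\|_\infty \lesssim (\epsilon R_j)^{-1}$. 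A covering of $B(0,N_j)$ by boxes of side $\asymp \epsilon R_j$ together with the lower density bound produces, for $N_j \gg R_j/\epsilon$, the \emph{main term}
\begin{equation*}
\int 1_{A_j \cap B(0,N_j)}(x)\,(1_{A_j} \ast \eta_{R_j})(x)\, dx \;\gtrsim\; \alpha^2\, R_j^{Q-1}\, N_j^{\,Q}.
\end{equation*}
The \emph{oscillatory term} involves $\psi_{R_j}$ and is controlled, via the Plancherel formula on $\mathbb H^n$ and Cauchy--Schwarz, by
\begin{equation*}
\Bigl|\int 1_{A_j \cap B(0,N_j)}(x)\,(1_{A_j} \ast \psi_{R_j})(x)\, dx\Bigr| \;\leq\; |B(0,N_j)| \cdot \sup_{\lambda \neq 0} \bigl\|\hat{\psi}_{R_j}(\lambda)\bigr\|_{\mathrm{op}}.
\end{equation*}

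To bound the operator norm, expand $\hat{\psi}_{R_j}(\lambda) = \sum_{k \geq 0} R_k(\lambda,\psi_{R_j})\,\mathcal{P}_k(\lambda)$, so that $\|\hat{\psi}_{R_j}(\lambda)\|_{\mathrm{op}} = \sup_k |R_k(\lambda,\psi_{R_j})|$. Convolution against $\phi_{\epsilon R_j}$ acts as a smoothing at scale $\epsilon R_j$, so the difference coefficients $R_k(\lambda,\psi_{R_j}) = R_k(\lambda,\sigma_{R_j}) - R_k(\lambda,\eta_{R_j})$ are uniformly negligible in the low-frequency regime $2(2k+n)|\lambda| \lesssim (\epsilon R_j)^{-2}$ (where the smoothed and unsmoothed measures agree up to admissible error), while in the complementary high-frequency regime $2(2k+n)|\lambda| \gg 1$ the decay supplied by \Cref{coeff. surface measure} gives $|R_k(\lambda,\sigma_{R_j})| = o(R_j^{Q-1})$ as $R_j \to \infty$. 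Choosing $\epsilon$ small but fixed, the oscillatory term is then $o(R_j^{Q-1}\,N_j^{\,Q})$ and is dominated by the main term, forcing $I_j > 0$ for $j$ large and contradicting $R_j \notin \Delta(A_j)$.

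The main obstacle is this last step: turning the coefficient decay of \Cref{coeff. surface measure} into a uniform operator-norm bound on $\hat{\psi}_{R_j}(\lambda)$ across the entire spectrum — in particular at the transition regime $2(2k+n)|\lambda| \sim 1$ — and coordinating the smoothing scale $\epsilon$ with that decay so that the low- and high-frequency regimes are simultaneously controlled. In the Euclidean setting a single scalar $|\widehat{\sigma_R}(\xi)|$ encodes everything, but the operator-valued Plancherel formula on $\mathbb H^n$ forces one to account for the full spectrum of the twisted Laplacian; the absence (acknowledged in the introduction) of a clean energy-integral identity in $\mathbb H^n$ means the low-frequency modes cannot simply be discarded, so both the mollification step and the high-frequency decay estimate must be quantitatively matched.
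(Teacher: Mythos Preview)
Your proposal has a genuine gap, and it lies not where you flag it but in the main-term estimate. You assert that a covering of $B(0,N_j)$ by boxes of side $\asymp \epsilon R_j$ ``together with the lower density bound'' yields
\[
\int 1_{A_j\cap B(0,N_j)}(x)\,(1_{A_j}\ast\eta_{R_j})(x)\,dx \gtrsim \alpha^2 R_j^{Q-1} N_j^Q,
\]
but density $\geq \alpha/2$ on $B(0,N_j)$ says nothing about the density of $A_j$ on Kor\'anyi annuli of radius $R_j$ and thickness $\epsilon R_j$. Under the contradiction hypothesis $R_j\notin\Delta(A_j)$, the set $A_j$ may perfectly well consist of clumps of diameter $\ll \epsilon R_j$, arranged so that no two clumps lie within Kor\'anyi distance $R_j(1\pm C\epsilon)$ of one another; then $1_{A_j}\ast\eta_{R_j}$ vanishes on $A_j$ and your main term is zero. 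No box-covering argument rescues this, because the obstruction is exactly that density at scale $N_j$ does not control density on a single thin annulus at scale $R_j$.

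The paper's route (following Bourgain) is built precisely to circumvent this. After rescaling to $E\subset B(0,1)$ with $|E|\geq\epsilon$, it works not with one scale but with a \emph{lacunary family} $r_1>\cdots>r_J$, and it obtains the main term on the Fourier side: the low-frequency portion $\mu|\lambda|r_j^2\leq\delta$ of the spectral expansion of $\int 1_E(1_E\ast\sigma_{r_j})$ is shown directly to be $\gtrsim |E|^2$ (\Cref{the LFE}), via the twisted-convolution identity~(\ref{twisted convol identity}) and the fact that $R_k(\lambda r_j^2,\sigma)\approx C_n$ there --- no spatial covering enters. The regime your mollification cannot tame, namely the ``medium'' frequencies $\delta\leq\mu|\lambda|r_j^2\leq 1/\delta$ where $R_k$ is $O(1)$ with no decay, is handled by Bourgain's energy pigeonholing (\Cref{Middle freq lemma}): lacunarity forces the corresponding spectral annuli to overlap only $O(\log(1/\delta))$ times, so Plancherel bounds the \emph{sum} over $j$ of the medium contributions by $O(\log(1/\delta))\,|E|$, and pigeonholing extracts one good scale. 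This multi-scale device is the missing ingredient; a single-scale smoothing decomposition cannot replace it.
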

Finally, as a form of sharpness of the main \Cref{all-large-dist-thm Hn}, we establish that the Kor\'anyi upper density in the above theorem cannot be replaced by a slower decaying density by constructing an example (see \Cref{Example Rice based}). This example is similar to the Euclidean one constructed in \cite{2020Rice} for the Theorem \ref{all-large-dist-thm}.
\begin{proposition}\label{prop:Example}
Suppose $f:(0,\infty)\rightarrow \mathbb [0,1]$ such that $\lim_{R\rightarrow \infty}f(R)=0$. There exists $A\subset \mathbb H^n$ and a sequence of $R_m\rightarrow \infty$ such that 
    \begin{enumerate}
    \item $\frac{|A\cap B(0, R_m)|}{|B(0, R_m)|}\geq f(R_m),$ for all $m=1,2,3, \dots$
    \item $|x-y|_K\neq R_m,$ for all $x,y\in A$ and  for all $m=1,2,3, \dots$.
    \end{enumerate}
    \end{proposition}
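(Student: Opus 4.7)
The plan is to adapt the Euclidean construction of Rice \cite{2020Rice} to the Heisenberg setting. The set $A$ and the sequence $R_1 < R_2 < \cdots$ are built inductively, with the $m$-th stage contributing a piece $A_m$ whose volume accounts for condition (1) while preserving condition (2) on the whole union.

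Fix a small parameter $\epsilon > 0$ and choose $R_1 > 2\epsilon$. Suppose that at stage $m$ I have already constructed $R_1 < \cdots < R_{m-1}$ and $A^{(m-1)} := A_1 \sqcup \cdots \sqcup A_{m-1}$, a finite disjoint union of Korányi balls of radius $\epsilon$. Using $f(R) \to 0$, I choose $R_m$ much larger than $R_{m-1}$ and large enough that the volume bookkeeping below leaves room, and I then place $A_m$ as a disjoint family of balls $\bigsqcup_{i} B(c_{m,i}, \epsilon)$ inside $B(0, R_m) \setminus B(0, R_{m-1})$ arranged so that
\[
|A_m| \ge f(R_m)\,|B(0, R_m)|
\quad\text{and}\quad
d_K(c, c') \notin \bigcup_{j \le m} [R_j - 2\epsilon,\, R_j + 2\epsilon]
\]
for every pair of distinct small-ball centers $c, c'$ appearing in $A^{(m)} := A^{(m-1)} \sqcup A_m$.

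The feasibility of this placement reduces to a volume estimate: each forbidden Korányi shell $\{x : d_K(x, c) \in [R_j - 2\epsilon,\, R_j + 2\epsilon]\}$ around a previously chosen center $c$ has measure of order $\epsilon R_j^{Q-1}$, where $Q = 2n+2$ is the homogeneous dimension. Summed over the finitely many earlier centers and the finitely many radii $R_1, \ldots, R_m$, the total forbidden volume is still a small fraction of $|B(0, R_m)| = C_Q R_m^Q$ once $R_m$ is taken large enough. Hence enough room remains to pack the new small balls with aggregate volume at least $f(R_m)|B(0, R_m)|$, the latter being small by virtue of $f(R_m) \to 0$.

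Setting $A := \bigsqcup_m A_m$, condition (1) is immediate since $A_m \subset B(0, R_m)$. For condition (2), any pair $x, y \in A$ lies in two small balls $B(c, \epsilon)$, $B(c', \epsilon)$: if $c = c'$, the triangle inequality for the Korányi metric yields $d_K(x, y) \le 2\epsilon < R_1 \le R_j$ for every $j$; if $c \neq c'$, then $d_K(x, y) \in [d_K(c, c') - 2\epsilon,\, d_K(c, c') + 2\epsilon]$, which avoids every $R_j$ by the placement. I expect the main obstacle to be this quantitative volume accounting in the placement step: one must choose $R_m$ growing fast enough at each stage (depending on $f$, on the previous $R_j$'s, and on the number of previously placed centers) so that the accumulated forbidden measure is beaten by the room demanded by the new balls, a tension the construction has to delicately balance.
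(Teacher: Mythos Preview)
Your greedy volume argument has a gap at the crucial step: avoiding the distance $R_m$ between pairs of \emph{new} centers placed at stage $m$. To meet the density requirement you need roughly $K_m \simeq f(R_m)\,(R_m/\epsilon)^Q$ new balls, and when you go to place the last of these, the forbidden $R_m$-shell of thickness $4\epsilon$ around each of the $K_m-1$ already-placed new centers has measure $\simeq \epsilon R_m^{\,Q-1}$. The aggregate forbidden volume is therefore $\simeq K_m\,\epsilon\,R_m^{\,Q-1}\simeq f(R_m)\,R_m^{\,2Q-1}/\epsilon^{\,Q-1}$, and for this to be smaller than the placement region $|B(0,R_m)|\simeq R_m^Q$ you would need $f(R_m)\lesssim(\epsilon/R_m)^{Q-1}$, which is far stronger than $f(R_m)\to 0$; already $f(R)=1/\log R$ breaks the bookkeeping. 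Enlarging $R_m$ does not help, because the problematic shell radius is $R_m$ itself.

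The paper (following Rice) escapes this circularity by taking the centers at stage $m$ to be a shifted block of the integer lattice $\mathbb{Z}^{2n}\times\mathbb{Z}$: all center-to-center differences then lie in a fixed discrete set of Kor\'anyi norms, and each $R_m$ is chosen \emph{after the fact} to miss that set by a margin $4\epsilon_m$ (with the $\epsilon_m$ shrinking along the construction). If you prefer to keep the greedy viewpoint, the minimal repair is to place $A_m$ inside $B(0,R_m/3)$ rather than in the full annulus $B(0,R_m)\setminus B(0,R_{m-1})$. Then any two centers of $A^{(m)}$ are at Kor\'anyi distance at most $2R_m/3<R_m$, so only the previously fixed radii $R_1,\ldots,R_{m-1}$ need to be avoided, and the greedy bound becomes $K_m\,\epsilon\,R_{m-1}^{\,Q-1}\ll R_m^Q$, which \emph{is} achievable by choosing $R_m$ large enough that $f(R_m)\ll(\epsilon/R_{m-1})^{Q-1}$.
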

\subsection{On Proof techniques} The study of spherical averaging operators and their associated Fourier transforms has played a central role in Euclidean harmonic analysis and geometric measure theory. A foundational object in this context is the surface measure on the Euclidean sphere, whose Fourier transform exhibits pointwise asymptotic decay that gives a wide range of results, including restriction estimates, distance set problems, and dispersive PDE estimates. The pointwise asymptotics of Fourier transform of the surface measure on the unit sphere in $\mathbb R^n$ are fundamental (see \cite{Ste93}) for understanding the fine-scale structure of convolution operators, oscillatory integrals, and in particular, is the main ingredient of the Fourier analytic proof of Theorem \ref{all-large-dist-thm} by Bourgain in \cite{1986Bourgain}.

A natural analogue of the Euclidean spherical surface measure in the Heisenberg group set up is the surface measure supported on the unit Kor\'anyi Sphere, $S_K(0,1)$, which is defined as 
 \begin{equation}
 S_K(0,1):=\lbrace(z,t) \in \H^n : |(z,t)|_K=1 \rbrace.
 \end{equation}
Let $\sigma$ denote the surface measure on $S_K(0,1)$, normalised to have unit mass, and let $R_{k}(\lambda, \sigma)$ be the coefficients in the spectral decomposition of the group Fourier transform of  $\sigma$ on the Kor\'anyi sphere in $\Ha$: $\hat{\sigma}(\lambda)= \sum_{k=0}^{\infty}  R_{k}(\lambda, \sigma)  \mathcal{P}_{k}(\lambda)$. We refer the reader to \Cref{Preliminaries} for a precise definition and reminder of their basic properties. While previous works have explored various important analytic tools in Heisenberg group, the precise ``decay" of the coefficients $R_k(\lambda,\sigma)$ in the spectral decomposition of the group Fourier transform of the surface measure on the Korányi sphere—that is, the analogue of the Euclidean stationary phase analysis—has not been fully characterized to the authors' best knowledge. Hence it is also of an independent interest to study $R_k(\lambda, \sigma)$. The novelty of the present article lies in providing the right ``decay" of these coefficients $R_{k}(\lambda, \sigma)$ (see Lemma \ref{coeff. surface measure}), thus mirroring the Euclidean setup.

\subsection{Sketch of the proof of the main theorem} Following Bourgain \cite{1986Bourgain}, the proof of our main  \Cref{all-large-dist-thm Hn} is obtained by first establishing the following quantitative analogue in the Heisenberg group.
\begin{theorem}
\label{FKW-quantitative}
Let $0< \epsilon< \frac{1}{2}$, let $E \subset  B(0,1) \subset \H^n$ have measure $|E| \geq \epsilon$, and let $J=J(\epsilon)$ be a sufficiently large natural number depending on $\epsilon$. Suppose that $0 < r_{J} < \cdots < r_{1} \leq 1 $ are a sequence of scales with $r_{j+1} \leq r_{j}/2$ for all $1 \leq j < J$. Then for at least one $1 \leq j \leq J$, one has
\begin{equation}\label{Main inequality}
    \int_{\H^n} \int_{S_{K}(0,1)} 1_{E}(x) 1_{E}( x \cdot \delta_{r_{j}}y ) d \sigma(y) \,  dx \gtrsim \epsilon^{2},
\end{equation}
where, $d \sigma$ is the surface measure on the unit Kor\'anyi sphere $S_{K}(0,1)$, normalised to have unit mass.\footnote{See \Cref{notation-sec} for our conventions on asymptotic notation.} 
\end{theorem}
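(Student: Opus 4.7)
The plan is to adapt Bourgain's $L^2$ Fourier-analytic argument from \cite{1986Bourgain} to the Heisenberg group, substituting the spectral decay of $R_k(\lambda,\sigma)$ from \Cref{coeff. surface measure} for the Euclidean stationary-phase decay $|\hat\sigma(\xi)|\lesssim|\xi|^{-(d-1)/2}$. Define the right spherical averaging operator
\[ T_r f(x) := \int_{S_K(0,1)} f(x \cdot \delta_r y) \, d\sigma(y), \]
so the integral on the left-hand side of \eqref{Main inequality} equals $\langle T_{r_j} 1_E, 1_E\rangle_{L^2(\mathbb H^n)}$. The argument introduces a Kor\'anyi-ball averaging comparator $M_r$, establishes a Plancherel-based $L^2$-sum estimate comparing $T_{r_j}$ with $M_{r_j}$, and closes with pigeonhole plus Cauchy--Schwarz.

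\textbf{Comparator and reduction to a pointwise spectral bound.} I would choose a comparator family $\{M_r\}_{r>0}$ of convolutions with nonnegative bumps of mass one at Kor\'anyi scale $r$, designed so that the Heisenberg-Fourier symbol $\sum_k S_k(r^2\lambda)\mathcal{P}_k(\lambda)$ of $M_r$ agrees with $\sum_k R_k(r^2\lambda,\sigma)\mathcal{P}_k(\lambda)$ to leading order as $(2k+n)r^2|\lambda|\to 0$ and also decays polynomially as $(2k+n)r^2|\lambda|\to\infty$. Applying Plancherel on $\mathbb H^n$ to $T_{r_j} f - M_{r_j} f$, together with the spectral decomposition of $\widehat{\sigma_{r_j}}(\lambda)$ (obtained via the Heisenberg dilation scaling $\widehat{\sigma_r}(\lambda) \sim \hat\sigma(r^2\lambda)$ after absorbing the conjugating unitaries into the projections), the desired $L^2$-sum estimate
\[ \sum_{j=1}^{J}\bigl\|T_{r_j} 1_E - M_{r_j} 1_E\bigr\|_{L^2(\mathbb H^n)}^2 \;\le\; C_n\,\epsilon \]
reduces to the uniform pointwise multiplier bound
\[ \sum_{j=1}^{\infty} \bigl|R_k(r_j^2\lambda,\sigma) - S_k(r_j^2\lambda)\bigr|^2 \;\le\; C_n, \qquad (k,\lambda)\in\mathbb N\times\mathbb R^*. \]
I would split the indices at the threshold $(2k+n)r_j^2|\lambda|=1$. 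In the low-frequency part, a Taylor expansion of the matching symbols near the origin yields $|R_k - S_k|^2\lesssim ((2k+n)r_j^2|\lambda|)^2$, geometrically summable because $r_{j+1}\le r_j/2$. In the high-frequency part, \Cref{coeff. surface measure} provides the polynomial decay of $|R_k(\mu,\sigma)|$, and a suitable choice of $M_r$ makes $|S_k(\mu)|$ decay at least as fast, again producing a convergent geometric series in $j$.

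\textbf{Density lower bound and pigeonhole.} The comparator is engineered so that $\langle M_{r_j} 1_E, 1_E\rangle\gtrsim\epsilon^2$ on the scales of interest; for a Kor\'anyi-ball average this follows from the identity
\[ \langle M_r 1_E, 1_E\rangle \;=\; \frac{|\{(x,y)\in E\times E : d_K(x,y)\le r\}|}{|B(0,r)|}, \]
which at scale $r\sim 1$ gives $\ge |E|^2/|B(0,2)|\gtrsim\epsilon^2$ since $E\subset B(0,1)$ and $|E|\ge\epsilon$; at smaller scales the lower bound is preserved either by an appropriate choice of the bump profile or by passing to a sub-collection of scales (the geometric spacing $r_{j+1}\le r_j/2$ leaves $J(\epsilon)$ control). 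Assume for contradiction that $\langle T_{r_j} 1_E, 1_E\rangle < c\epsilon^2$ for every $j\le J$ in that sub-collection. The reverse triangle inequality forces $|\langle (T_{r_j}-M_{r_j}) 1_E, 1_E\rangle|\gtrsim\epsilon^2$, and Cauchy--Schwarz together with $\|1_E\|_2^2=\epsilon$ gives $\|T_{r_j} 1_E - M_{r_j} 1_E\|_2^2\gtrsim\epsilon^3$. Summing over $j$ yields $J\epsilon^3\lesssim\epsilon$, i.e.\ $J\lesssim\epsilon^{-2}$, contradicting the choice of $J = J(\epsilon)$ once $J(\epsilon)$ exceeds a constant multiple of $\epsilon^{-2}$.

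\textbf{Main obstacle.} The decisive input is the uniform pointwise bound $\sum_j |R_k(r_j^2\lambda,\sigma)-S_k(r_j^2\lambda)|^2\le C_n$, and the only nontrivial ingredient in proving it is the quantitative high-frequency decay of $R_k(\lambda,\sigma)$, exactly what \Cref{coeff. surface measure} supplies — the Heisenberg analogue of Bourgain's stationary-phase estimate. A secondary technical difficulty is designing the comparator $M_r$ so that both the leading-order spectral matching and the density lower bound $\langle M_{r_j} 1_E, 1_E\rangle\gtrsim\epsilon^2$ hold uniformly across all selected scales, and correctly tracking the Heisenberg dilation scaling $\widehat{\sigma_r}(\lambda)\sim\hat\sigma(r^2\lambda)$ through the spectral decomposition.
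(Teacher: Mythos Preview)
Your outline is a legitimate variant of Bourgain's strategy, but it differs from the paper's execution and has one genuine gap.

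\textbf{Comparison with the paper.} The paper does \emph{not} introduce a comparator $M_r$. Instead it writes $\langle T_{r_j}1_E,1_E\rangle$ on the spectral side and splits the $(k,\lambda)$–plane into three zones using an auxiliary parameter $\delta$: a low zone $\mu|\lambda|r_j^2\le\delta$, a middle zone $\delta\le\mu|\lambda|r_j^2\le 1/\delta$, and a high zone $\mu|\lambda|r_j^2>1/\delta$. The low zone is shown to contribute $\gtrsim|E|^2$ by a direct Heisenberg-specific computation (Lemma~4.2), estimating the twisted convolution $f^\lambda *_\lambda \varphi_{k,\lambda}^{n-1}(z)$ near $z=0$ via Laguerre asymptotics. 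The middle zone is handled by the overlap/pigeonhole argument (Lemma~4.3), and the high zone by \Cref{coeff. surface measure}. This yields $J(\epsilon)\gtrsim\epsilon^{-1}\log(1/\epsilon)$, better than the $\epsilon^{-2}$ your Cauchy--Schwarz closing gives. Your two-zone comparator scheme is cleaner to state but hides the low-frequency work inside the claim $\langle M_{r_j}1_E,1_E\rangle\gtrsim\epsilon^2$.

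\textbf{The gap.} Your justification of $\langle M_r1_E,1_E\rangle\gtrsim\epsilon^2$ is incorrect as written. For the Kor\'anyi ball average, you assert that at ``scale $r\sim1$'' one has
\[
\frac{|\{(x,y)\in E\times E:\ d_K(x,y)\le r\}|}{|B(0,r)|}\ \ge\ \frac{|E|^2}{|B(0,2)|},
\]
because $E\subset B(0,1)$. But this containment only forces $\{(x,y)\in E\times E\}\subset\{d_K(x,y)\le r\}$ when $r\ge\mathrm{diam}\,E=2$, which is outside your range $r_j\le 1$; for $r<2$ the numerator can be strictly smaller than $|E|^2$. The hedge ``passing to a sub-collection of scales'' does not help, since you need the lower bound at \emph{every} (or a positive proportion of) the $J$ scales to run the pigeonhole.

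The inequality you want is nevertheless true for all $r\le 1$, but the proof requires an extra step: writing $\phi_r:=|B(0,r)|^{-1}1_{B(0,r)}$, one has the pointwise domination $\phi_r\ge 2^{-Q}\,\phi_{r/2}*\phi_{r/2}$ (both sides supported in $B(0,r)$, compare at $y=0$), whence
\[
\langle M_r1_E,1_E\rangle\ \ge\ 2^{-Q}\,\|M_{r/2}1_E\|_2^2\ \ge\ 2^{-Q}\,\frac{|E|^2}{|B(0,1+r/2)|}\ \gtrsim\ |E|^2,
\]
the middle step being Cauchy--Schwarz applied to $\int M_{r/2}1_E=|E|$ on its support $B(0,1+r/2)$. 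Equivalently, choosing the comparator bump to be of the form $\psi_r*\check\psi_r$ from the outset makes $\langle M_r1_E,1_E\rangle=\|1_E*\psi_r\|_2^2$ and the lower bound immediate.

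\textbf{Minor over-claim.} For the low-frequency matching you assert $|R_k-S_k|^2\lesssim((2k+n)r_j^2|\lambda|)^2$. The paper only proves $R_k(\lambda,\sigma)=C_n+O((\mu|\lambda|)^{1/2})$ (\Cref{Surface coeff. low}), which gives $|R_k-S_k|^2\lesssim(2k+n)r_j^2|\lambda|$. This weaker rate still sums (since $\sum_j r_j^2<\infty$ by lacunarity), so the argument survives, but the squared bound would need its own proof.
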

Intuitively, the reason for this quantitative result, as in the Euclidean case \cite{1986Bourgain}, is based on Fourier analytic techniques. Thus, from Fourier inversion, we write left side of (\ref{Main inequality}) in terms of group Fourier transforms $\widehat{1_{E}}$ of the function $f:=1_{E}$, the indicator function of the set $E$, and $\widehat{\sigma}$ that of the Kor\'anyi surface measure  $\sigma$ as 
\begin{equation}\label{total integral}
    (2\pi)^{-n-1}  \int_{\R^{*}} \sum_{k=0}^{\infty} R_{k}(\lambda r_{j}^{2},\sigma) \sum_{|\alpha|=k} \langle \hat{f}(\lambda)^{*} \hat{f}(\lambda) \Phi_{\alpha}^{\lambda},\Phi_{\alpha}^{\lambda} \rangle \ |\lambda|^n d\lambda,
\end{equation}
which we decompose into ``low", ``middle" and ``high frequencies" determined by the size of $2(2k+n) \, |\lambda| r_{j}^2$. Here, $R_{k}(\lambda, \sigma)$  are the coefficients in the spectral decomposition $$\widehat{\sigma}(\lambda)= \sum_{k=0}^{\infty} R_{k}(\lambda, \sigma) \mathcal{P}_{k}(\lambda),$$ (see Subsection \ref{FT of sigma}) which are given by
\begin{equation*}
    R_{k}(\lambda, \sigma):= c_{n} \frac{\Gamma(k+1)}{\Gamma(k+n)} \int_{- \pi/2}^{\pi/2} L_{k}^{n-1} \left( {\textstyle\frac{1}{2} } |\lambda |    \cos{\theta}  \right) \, e^{-\frac{1}{4}  |\lambda|  \cos{\theta}}  e^{i \frac{1}{4} \lambda \sin{\theta}} (\cos{\theta})^{n-1} d \theta.
\end{equation*}
For the contribution of the low  frequencies to (\ref{total integral}), the factor $R_{k}(\lambda, \sigma)$ being close to $1$ (See \Cref{Surface coeff. low}) so by relating the expression $\sum_{|\alpha|=k} \langle \hat{f}(\lambda)^{*} \hat{f}(\lambda) \Phi_{\alpha}^{\lambda},\Phi_{\alpha}^{\lambda} \rangle$ to the $L^{2}$-norm of the twisted convolution of $f^{\lambda}$ (the Euclidean Fourier transform of $f$ in the last variable, see (\ref{eq:inverseFT})) with the Laguerre functions, we will obtain the lower bound to this quantity that is  $\gtrsim |E|^2 \geq \epsilon^2$ if $2(2k+n)|\lambda|r_{j}^2 \leq \delta$ for some small enough $\delta$, see \Cref{the LFE}.  The contribution of the middle frequencies (that is, when $ \delta \leq 2(2k+n) \, |\lambda| r_{j}^2 \leq 1/ \delta $) to (\ref{total integral}) can be handled by a straightforward adaptation of Bourgain's ``energy pigeonholing" argument used by him in the Euclidean case.

To show that the contribution  of high frequencies  to the integral (\ref{total integral}) is negligible, we need the precise ``decay" of these coefficients $R_{k}(\lambda, \sigma)$, for high frequencies, that is when $ 2(2k+n) |\lambda|$ large. The key novelty of this paper is that we could manage to find the right ``decay" of these  coefficient of $R_{k}(\lambda, \sigma)$ that works for $\H^n$, for all $n \geq 1$. Set $\mu = 2(2k+n)$.
\begin{lemma} \label{coeff. surface measure}
    Let $n \geq 1$. For $\mu |\lambda| \gg 1$, one has
    \begin{equation}
        | R_{k}(\lambda, \sigma) | \lesssim ( |\lambda| \mu )^{-( \frac{n}{2} - \frac{1}{4} )}.
    \end{equation}
\end{lemma}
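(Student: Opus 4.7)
The plan is to discard the complex exponential $e^{i\lambda \sin\theta/4}$, bound the integrand by its modulus, and extract the required decay directly from a sharp pointwise estimate on the Laguerre function. Indeed, $e^{-x/2} L_k^{n-1}(x)$ oscillates in a Bessel-like fashion for $\mu x \gg 1$ and $x \lesssim \mu$, and the decay coming from that oscillation already matches the target rate $(\mu |\lambda|)^{-(n/2-1/4)}$, so no additional stationary phase in $\theta$ is needed.

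The key pointwise ingredient is the Hilb-type asymptotic (Szeg\"o, \emph{Orthogonal Polynomials}, formula 8.22.4), which compares a Laguerre polynomial with Gaussian weight to a Bessel function:
\begin{equation*}
\frac{\Gamma(k+1)}{\Gamma(k+n)}\, e^{-x/2} L_k^{n-1}(x) \;=\; x^{-(n-1)/2} N^{-(n-1)/2} J_{n-1}\bigl(2\sqrt{N x}\bigr) \;+\; (\text{lower-order remainder}),
\end{equation*}
where $N = k + n/2$ so that $4N = \mu$. Combined with the classical bound $|J_{\nu}(t)| \lesssim \min\{t^{\nu},\, t^{-1/2}\}$, this gives
\begin{equation*}
\left| \frac{\Gamma(k+1)}{\Gamma(k+n)}\, e^{-x/2} L_k^{n-1}(x) \right| \;\lesssim\; \min\bigl\{1,\; (\mu x)^{-(n-1)/2 - 1/4}\bigr\},
\end{equation*}
uniformly throughout the oscillatory range $x \lesssim \mu$; the same inequality is also available directly from the Askey--Wainger pointwise bounds for Laguerre functions.

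Substituting $x = \tfrac{1}{2}|\lambda|\cos\theta$ and splitting the $\theta$-integral at $\mu|\lambda|\cos\theta = 1$, the main region $\{\mu|\lambda|\cos\theta \ge 1\}$ contributes at most
\begin{equation*}
(\mu|\lambda|)^{-(n-1)/2 - 1/4} \int_{-\pi/2}^{\pi/2} (\cos\theta)^{(n-1)/2 - 1/4}\, d\theta,
\end{equation*}
whose $\theta$-integral is finite because $(n-1)/2 - 1/4 > -1$ for every $n \ge 1$; the identity $-(n-1)/2 - 1/4 = -(n/2 - 1/4)$ then produces exactly the claimed exponent. The complementary endpoint region $\{\mu|\lambda|\cos\theta < 1\}$ consists of two short arcs of length $\lesssim (\mu|\lambda|)^{-1}$ near $\theta = \pm \pi/2$; using the trivial bound there together with the factor $(\cos\theta)^{n-1}$ contributes at most $O((\mu|\lambda|)^{-n})$, which is absorbed into the main term under the hypothesis $\mu|\lambda| \gg 1$.

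The main obstacle is to verify that the pointwise Laguerre bound holds uniformly across the full relevant range of $x$: the Airy transition region $x \sim \mu$ and the exponentially decaying tail $x \gtrsim \mu$ become active precisely when $|\lambda| \gtrsim \mu$. I would handle these by invoking the Askey--Wainger inequality, which retains the $(\mu x)^{-(n-1)/2-1/4}$ shape throughout the oscillatory region, together with the standard exponential decay of $e^{-x/2}L_k^{n-1}(x)$ beyond the turning point; in either sub-regime the resulting portion of the $\theta$-integral is no worse than the Bessel-regime contribution already obtained.
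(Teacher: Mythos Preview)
Your approach is genuinely different from the paper's and, in outline, correct and simpler: the paper decomposes into stationary and nonstationary regions for the phase $e^{i\lambda\sin\theta/4}$ and invokes van der Corput in the nonstationary piece, whereas you discard that phase entirely and rely solely on pointwise Laguerre bounds. The Bessel-regime computation you give is exactly right, and the identity $-(n-1)/2-1/4=-(n/2-1/4)$ together with the convergent $(\cos\theta)^{(n-1)/2-1/4}$ integral delivers the target exponent. Both arguments ultimately rest on the same Laguerre asymptotics, but yours avoids the two-page van der Corput case analysis of Section~5; what the paper's extra work buys is slightly sharper sub-regime bounds (e.g.\ the additional factor $\lambda^{-(n/4+1/8)}$ in its Case~A), which are not needed for the Lemma as stated.

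There is, however, a real gap at the Airy transition. The Hilb--Szeg\H{o} asymptotic (8.22.4) and the bound $\bigl|\tfrac{\Gamma(k+1)}{\Gamma(k+n)}e^{-x/2}L_k^{n-1}(x)\bigr|\lesssim(\mu x)^{-(n-1)/2-1/4}$ are valid only for $x$ up to a fixed fraction of $\mu$; they \emph{fail} near $x=\mu$. The correct Askey--Wainger (or Thangavelu, \emph{Lectures on Hermite and Laguerre Expansions}, Lemma~1.5.3) bound in the Airy window $\mu/2\le x\le 3\mu/2$ is $\mu^{-(n-1)}\mu^{-1/4}(\mu^{1/3}+|\mu-x|)^{-1/4}$, which at $x=\mu$ equals $\mu^{-n+2/3}$, \emph{larger} than your claimed $\mu^{-n+1/2}$ by a factor $\mu^{1/6}$. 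This matters: when $|\lambda|\sim\mu$ the main mass of the $\theta$-integral (for $n\ge 2$, where $(\cos\theta)^{(n-1)/2-1/4}$ peaks at $\theta=0$) sits precisely at $x\sim|\lambda|/2\sim\mu$, so the Airy region is the dominant contribution, not a negligible endpoint.

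The gap is fixable without changing your strategy. Using the correct Airy bound, one checks (after the change of variables $x=\tfrac12|\lambda|\cos\theta$, so $d\theta=dx/\sqrt{(|\lambda|/2)^2-x^2}$) that the Airy window contributes
\[
\lambda^{-n+1}\,\mu^{-(n-1)-1/4}\int_{\mu/2}^{\min(3\mu/2,\,|\lambda|/2)}(\mu^{1/3}+|\mu-x|)^{-1/4}\,\frac{x^{n-1}\,dx}{\sqrt{(|\lambda|/2)^2-x^2}}\;\lesssim\;(\mu|\lambda|)^{-(n/2-1/4)},
\]
with the worst case $|\lambda|\sim 2\mu$ giving exactly this order (the $(\mu-x)^{-1/4}$ and $(|\lambda|/2-x)^{-1/2}$ singularities are both integrable and their product integrates to $O(\mu^{1/4})$ over the window). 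You should carry out this short estimate explicitly rather than asserting that the Askey--Wainger bound ``retains the $(\mu x)^{-(n-1)/2-1/4}$ shape'' there---it does not.
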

This ``decay" of these coefficients, $R_{k}(\lambda, \sigma)$,  seems to be new in the literature. Roughly speaking, our approach to achieve it involves first splitting the integral for $R_{k}(\lambda, \sigma)$ into  the integral over the region where the  phase  $\exp{\textstyle i \frac{1}{4} \lambda \sin{ \theta}}$ is stationary plus over the remaining  nonstationary region. To handle the contribution of the stationary region to $R_{k}(\lambda, \sigma)$ one can discard the factor $\exp{\textstyle i \frac{1}{4} \lambda \sin{ \theta}}$ and estimate the rest part of the integral by employing the  delicate asymptotics of the Laguerre polynomials to deal with the further oscillations present due to the factor  $L_{k}^{n-1}( \frac{1}{2} \lambda \cos{ \theta} )$. The contribution of the nonstationary region (with respect to the complex exponential) can be dealt by again using the asymptotics of the Laguerre polynomials and estimating the resulting oscillatory integral using van der Corput lemma. See \Cref{mainnovelproof}, for the detailed discussion.

\subsection{Outline of the paper} We begin with the required preliminaries in \Cref{Preliminaries}. \Cref{mainresultproof} reduces \Cref{all-large-dist-thm Hn} to its quantitative analogue \Cref{FKW-quantitative}. The proof of Theorem \ref{FKW-quantitative} is in \Cref{mainpropproof}. The crucial \Cref{coeff. surface measure} is proven in \Cref{mainnovelproof}. \Cref{appendix} establishes \Cref{prop:Example} along with the quantitative version of Steinhaus's theorem in the Heisenberg group and its consequence.
\subsection{Notation}\label{notation-sec} We use the following asymptotic notation throughout the paper:
\begin{itemize}
    \item  We adopt the notation  $A\lesssim B$ or $A=O(B)$ or $ B \gtrsim A$ to denote the estimate $|A| \leq C B$ for some constant $C$ independent of $A$ and $B$. Here, if $C$ can be chosen small enough, we use the notation $A \ll B$ to mean that $B$ is sufficiently larger than $A$. And, $A \simeq B $ means both $A\lesssim B$ and $ A \gtrsim B$. If we need the constant $C$ to depend on additional parameters (for instance, the dimensional index $n$ of $\Ha$ or some other parameter such $k_{0}$)  we indicate this by subscripts; thus for example, $A \lesssim_{n,k_{0}} B$ or $A =O_{n,k_{0}}( B )$ denotes the bound $|A| \leq C_{n,k_{0}} B$ for some constant $C_{n,k_{0}}$ that depends on $n$ and $k_{0}$.
    \item We use $A \sim B$ as $x \rightarrow \infty$ to denote $A = (1 + E) B$ where $|E| \leq c(x)$ for some function $c(x)$ of a parameter $x$ that goes to zero as {$x \rightarrow \infty$} (holding all other parameters fixed).
    \item $\mu:= 2(2k+n)$, where $k \in \N_{0}$, the set of nonnegative integers, and $n \geq 1$.
\end{itemize}

\section{Preliminaries}\label{Preliminaries}
As our approach is to use Fourier analytic tools (especially in proving the quantitative version of our main theorem) so we gather here all the material relevant for analysis on the Heisenberg group. In particular, we need to recall the facts about Schr\"odinger representations, Weyl transform, Hermite and special Hermite operators and Laguerre functions.
\subsection{Fourier transform on the Heisenberg group}
\label{sec:GFT}
Let us first briefly recall the unitary representation theory of the Heisenberg group, $\Ha$. For our purposes, it will be more convenient to work
with Schr\"odinger representations. For  $ \lambda \in  \R^{*}:=\R \setminus \{0\}$, the \emph{Schr\"odinger representation} $ \pi_\lambda $ acts on the Hilbert space $ L^2( \R^n)$ as follows:
$$ \pi_\lambda(z,t) \varphi(\xi) = e^{i\lambda t} e^{i(x \cdot \xi+ \frac{1}{2}x \cdot y)}\varphi(\xi+y),\,\,\,$$
where $ z = x+iy \in \Ca $ and $ \varphi \in L^2(\R^n)$. One can easily check that these representations are irreducible and unitary. Ignoring the finite dimensional representations of $\Ha$ (as they do not contribute to the Plancherel measure) one has that, up to unitary equivalence, these $\pi_{\lambda}$ accounts for all irreducible unitary representations of $\Ha$ which are nontrivial at the center of $\Ha$, thanks to a theorem of Stone-von Neumann. For a proof we refer to the monograph of Folland \cite{Fo}. In accordance with general theory, these unitary representations $\pi_{\lambda}$ give rise to a group Fourier transform on $L^{1}(\Ha)$. Given $f \in L^{1}(\Ha)$ its group Fourier transform is an operator valued function on $\R^{*}$ obtained by integrating $ f $ against $ \pi_\lambda$:
$$ \hat{f}(\lambda) = \int_{\Ha} f(z,t) \pi_\lambda(z,t)  dz dt .$$  
Clearly, $ \hat{f}(\lambda) $ is a bounded linear operator on $ L^2(\R^n).$ Let $ f^\lambda (z) $ stands for Euclidean inverse Fourier transform of $ f $ in the last variable $t$: 
\begin{equation}
\label{eq:inverseFT}
f^\lambda(z) = \int_{-\infty}^\infty f(z,t) e^{i\lambda t} dt,
\end{equation}
then one can easily see that
\begin{equation}
\label{eq:FTbis}
\widehat{f}(\lambda) = \int_{\C^n} f^\lambda(z) \pi_\lambda(z,0) dz.
\end{equation}
The operator which takes  a function $ g $ on $ \C^n $ into the operator
$$ \int_{\C^n} g(z) \pi_\lambda(z,0) dz $$ is called the \emph{Weyl transform} of $ g $ and is denoted by $ W_\lambda(g)$. Thus, 
\begin{equation}
    \widehat{f}(\lambda) = W_\lambda(f^\lambda).
\end{equation}
From the explicit formula for the kernel of the Weyl tranform and the Plancherel theorem over $\R$ with respect to the last variable, one has that for $g \in L^{1} \cap L^2 (\Ca)$ its Weyl tranform $W_{\lambda}(g)$ is a Hilbert-Schmidt operator satisfying
\begin{equation}\label{Plancherel Weyl}
    \int_{\Ca} |g(z)|^2 dz = (2\pi)^{-n} |\lambda|^n \,  \| W_{\lambda}(g) \|_{\text{HS}}^2.
\end{equation}
This immediately leads to the the following Plancherel formula:
for $ f \in L^1 \cap L^2(\Ha) $ its Fourier transform  is in fact a Hilbert-Schmidt operator and one has 
\begin{equation}\label{Plancherel}
    \int_{\Ha} |f(z,t)|^2 dz dt = (2\pi)^{-n-1} \int_{-\infty}^\infty \|\hat{f}(\lambda)\|_{HS}^2 |\lambda|^n d\lambda .
\end{equation}
Thus,  the Fourier transform extends to be  a unitary operator between $ L^2(\Ha) $ and the Hilbert space of Hilbert-Schmidt operator valued functions  on $ \R $ which are square integrable with respect to the Plancherel measure, $ (2\pi)^{-n-1} |\lambda|^n d\lambda.$

Taking the inverse Fourier transform in the central variable, that is to say \eqref{eq:inverseFT}, is an important tool which is quite often employed in studying problems on $ \H^n$. For instance, it converts the group convolution on $ \H^n $ into the so-called \emph{twisted convolution} on $ \C^n$. Let us recall that the convolution of two absolutely integrable functions $ f,g $ on $\H^n$ is defined in the usual manner as
$$
f*g(x) := \int_{\H^n} f(xy^{-1})g(y) dy, \quad x\in \H^n.
$$
Writing in coordinates $(z,t) \in \Ha$ and taking inverse Fourier transform with respect to  the central variable $t$, it can be shown that
$$
(f*g)^\lambda  (z) = \int_{\C^n} f^\lambda(z-w)g^\lambda(w) e^{ i \frac{\lambda}{2} \im(z . \overline{w})} dw.
$$
Motivated by this, one defines the $\lambda$-twisted convolution of two absolutely integrable functions $f,g$ on $ \Ca$  to be
\begin{equation*}
  f \ast_{\lambda} g   (z) : = \int_{\C^n} f(z-w) g(w) e^{ i \frac{\lambda}{2} \im(z . \overline{w})} dw.
\end{equation*}
From the definitions above, it follows that the identity $ \widehat{f*g}(\lambda) = \widehat{f}(\lambda) \widehat{g}(\lambda)$ yields the relation 
\begin{equation} \label{Weyl on twisted convolution}
     W_\lambda(f^\lambda*_\lambda g^\lambda)
=W_\lambda(f^\lambda)W_\lambda(g^\lambda).
\end{equation}

\subsection{Hermite functions and the Heisenberg group}
\label{sec:Hermite}
Let $\N_{0}$ denotes the set of all nonnegative integers. For each $k\in \N_0$, the Hermite polynomial $H_k(t) $ on $\mathbb R$ is defined by  Rodrigues' formula $H_k(t)=(-1)^k e^{t^2} {d^k\over d t^k} \big(e^{-t^2}\big)$, and  the $L^2$-normalized  Hermite functions
$h_k(t):=(2^k k !  \sqrt{\pi})^{-1/2} H_k(t) e^{-t^2/2}$, $k\in \mathbb N_0$ form an orthonormal basis
of $L^2(\mathbb R)$.   Whereas, in higher dimensions  
 the $n$-dimensional Hermite functions are given by the tensor products of $h_k$: $\Phi_{\alpha}(x):=\prod_{i=1}^n h_{\alpha_i}(x_i), \quad \alpha=(\alpha_1, \cdots, \alpha_n)\ {\in\N_0^n.}$
The set $\{\Phi_{\alpha}\}_{\alpha\in \mathbb N_0^n}$ forms an orthonormal basis of $L^2(\mathbb R^n)$ and the functions $\Phi_{\alpha}$ are eigenfunctions for the Hermite operator with eigenvalue  $2|\alpha|+n$ where $|\alpha|=\sum_{i=1}^n \alpha_i$. 
Thus, for  every $\varphi \in L^2(\mathbb R^n)$  we have  the Hermite expansion
\[ 
\varphi =\sum_{\alpha\in\N_0^n} \inp{\varphi}{\Phi_\alpha}\Phi_\alpha=\sum_{k \in 2\N_0+d} \mathcal{P}_{k} \varphi,
\]
where $\langle \cdot, \cdot \rangle$ is the inner product in $L^2(\R^n)$ and $\mathcal{P}_{k}$ denotes the Hermite spectral projection given by $\mathcal{P}_{k}f : =\sum_{2|\alpha|+n= k}\langle f, \Phi_{\alpha}\rangle\Phi_{\alpha}$, see for instance \cite[Chapter 1.4]{Thangavelu-Heisenberg}.

Since we will be working on  the Heisenberg group $\Ha \equiv \Ca \times \R$ where we employ the trick of taking inverse Fourier transform in the central variable to reduce the matters to $\Ca$, so for us it more suited to consider the scaled Hermite functions on $\R^n$. So, for $\lambda\in \R^*$ and each $ \alpha \in \N_{0}^n $, we introduce the family of scaled Hermite functions
$$
\Phi_\alpha^\lambda(x) := |\lambda|^{\frac{n}{4}}\Phi_\alpha(\sqrt{|\lambda|} \, x), \quad x\in \R^n.
$$
As these $ \Phi_\alpha^\lambda $ also forms an orthonormal  basis of $L^{2}(\R^n)$, so expressing Hilbert Schmidt norm of $\widehat{f}(\lambda)$ in this basis one gets
$ \|\widehat{f}(\lambda)\|_{\operatorname{HS}}^2=\sum_{\alpha\in \N^n}\|\widehat{f}(\lambda) \Phi_\alpha^\lambda \|_{2}^2 $
and hence, by (\ref{Plancherel}), the Plancherel formula can be written as
$$
\int_{\H^n}|f(z,w)|^2\,dz\,dw=\frac{2^{n-1}}{\pi^{n+1}}
\int_{-\infty}^{\infty}\Big(\sum_{\alpha\in \N^n}\|\widehat{f}(\lambda)\Phi_\alpha^\lambda\|_{2}^2\Big)|\lambda|^n\,d\lambda.
$$
Let $\mathcal{P}_{k} (\lambda)$ denotes the orthogonal projection $L^{2}(\R^n)$ onto the finite dimensional subspace spanned by $\Phi_{\alpha}^{\lambda}$,  with $|\alpha|=k$: $ \mathcal{P}_{k}(\lambda)\varphi = \sum_{|\alpha| =k} \langle \varphi,\Phi_\alpha^\lambda \rangle \Phi_\alpha^\lambda$. Then $\Phi_{\alpha}^{\lambda}$ are eigen functions of the scaled Hermite operators  $ H(\lambda) :=  -\Delta+|\lambda|^2 |x|^2 $, indexed by $\lambda \in \R^{*}$, whose spectral decomposition is given by $H(\lambda) = \sum_{k=0}^\infty (2k+n)|\lambda| \mathcal{P}_{k}(\lambda)$.

If a  function $ f $ on $\Ha$ is radial , that is, $f( z,t)=f_{0}(|z|,t)$, for some $f_{0}$ defined on $(0, \infty) \times \R$,  then its Fourier transform $ \widehat{f}(\lambda) $ is a function of the Hermite operator $ H(\lambda)$. To see this one need to bring in the Laguerre functions. Consider the Laguerre functions of type $n-1$ defined on $\Ca$ as
\begin{equation}
\label{eq:Laguerre}
\varphi_{k,\lambda}^{n-1}(z) := L_k^{n-1}\Big( {\textstyle \frac{1}{2} } |\lambda||z|^2\Big)e^{-\frac14 |\lambda||z|^2}.
\end{equation}
Here, $ L_k^{n-1} $ is the $k$-th Laguerre polynomials of type $ (n-1)$. In general, for $\beta > -1$ the Laguerre polynomials of  type $\beta$ are given by the Rodrigues' formula  
\begin{equation*}
   e^{-r} r^{-\beta} L_{k}^{\beta}(r) := \frac{ 1 }{k!}  \frac{d^k}{dr^{k}} ( e^{-r} r^{k+\beta} ),
\end{equation*}
where, $r>0$ and $k \in \N_{0}$.  The collection $\{ \varphi_{k,\lambda}^{n-1}\}_{k=0}^{\infty}$ forms an orthogonal basis for the subspace consisting of  the (usual) radial functions in $ L^2(\C^n).$  

As can be seen from \cite{STU}, for any $L^{2}(\Ca)$-function $g$ one has the following compact form of the  so-called special Hermite expansion $ g(z)=(2\pi)^{-n} |\lambda|^n \sum_{k=0}^\infty  g*_\lambda \varphi_{k,\lambda}^{n-1}(z).$ The connection between the Hermite projections $\mathcal{P}_{k} (\lambda)$ and the Laguerre functions $\varphi_{k, \lambda}^{n-1}$, via the Weyl transform, is given by the following important formula
\begin{equation}
\label{WeylLaguerre}
W_\lambda(\varphi_{k,\lambda}^{n-1}) = (2\pi)^n |\lambda|^{-n} \mathcal{P}_{k} (\lambda).
\end{equation}
For any function $f$ on $\H^n$, we have the expansion $f^{\lambda}(z)=(2\pi)^{-n} |\lambda|^n \sum_{k=0}^\infty  f^{\lambda}*_\lambda \varphi_{k,\lambda}^{n-1}(z)$ 
which, when specialised to radial $f$ on $\Ha$, reduces to simply a Laguerre expansion. 
$$
f^{\lambda}(z)=(2\pi)^{-n} |\lambda|^n \sum_{k=0}^\infty  \mathfrak{c}_k(\lambda, f) \, \varphi_{k,\lambda}^{n-1} (z).
$$
Here, $ \mathfrak{c}_k(\lambda, f) $ are the Laguerre coefficients of the radial function $ f^\lambda $ on $ \C^n$ given by
$$
\mathfrak{c}_k(\lambda, f)=  |\lambda|^{n/2} \,  \frac{\Gamma(k+1) \Gamma(n)}{\Gamma(k+n)}   \int_{\C^n} f^\lambda(z) \, \varphi_{k,\lambda}^{n-1}(z) dz.
$$
Thus, for radial $f$ on $\Ha$, the above Weyl transform argument and the identity \eqref{WeylLaguerre} yields
$$
\widehat{f}(\lambda) = \sum_{k=0}^\infty  \mathfrak{c}_k(\lambda, f) \,  \mathcal{P}_{k} (\lambda).
$$
We record the following useful identity we will need for lower bounding the low frequency part of (\ref{total integral}) : For $f \in L^{2}(\Ha)$ one has
\begin{equation}\label{twisted convol identity}
  \sum_{|\alpha|=k} \langle \widehat{f}(\lambda)^{*} \widehat{f}(\lambda) \Phi_{\alpha}^{\lambda} , \Phi_{\alpha}^{\lambda}  \rangle = (2 \pi)^{-n} |\lambda|^n \,   \| f^{\lambda} \ast_{\lambda} \varphi_{k, \lambda}^{n-1} \|_{2}^2,
\end{equation}
which is a consequence of (\ref{Weyl on twisted convolution}), (\ref{Plancherel Weyl}) and (\ref{WeylLaguerre}).  We refer the reader to \cite{Thangavelu-Heisenberg} or \cite{STU} for a more detailed treatment of these topics.

\subsection{ Fourier transforms of radial measures}\label{FT of sigma}
Let $\nu$ be a finite Borel measure on $\Ha$. The Fourier transform of such $\nu$ are defined similarly: 
\begin{equation*}
    \widehat{\nu}(\lambda) := \int_{\Ha} \pi_{\lambda}(z,t) \, d \nu (z ,t),
\end{equation*}
which is again a bounded linear operator on $L^{2}(\R^n)$. If the measure $\nu$ has more symmetry then one can give good description of its Fourier transform. Consider the natural action of the unitary group, $ U(n) $, on $ \Ha $ given by $ \bm{k} \cdot(z,t) := ( \bm{k} \cdot z,t)$, $ \bm{k} \in U(n) $. This induces an action on functions and measures on the Heisenberg group.  We say that a measure $ \nu $ is radial if it is invariant under the action of $ U(n)$. Similar to the case of radial functions, if a finite Borel measure $\nu$ is also radial then it turns out that its Fourier transform  $\hat{\nu}(\lambda)$ is a function of scaled Hermite operator $ H(\lambda) = -\Delta+\lambda^2 |x|^2$. If  $ H(\lambda) = \sum_{k=0}^\infty (2k+n)|\lambda| \mathcal{P}_{k} (\lambda)$  stands for the spectral decomposition of this operator, then for a radial measure $ \nu $ we have
$$ \widehat{\nu}(\lambda)  = \sum_{k=0}^\infty  R_k(\lambda, \nu) \mathcal{P}_{k} (\lambda).$$ 
Here, $\mathcal{P}_{k} (\lambda)$ are the orthogonal projection of $L^2(\mathbb{R}^n)$ onto the $k^{th}$ eigenspace spanned by scaled Hermite functions $\Phi^{\lambda}_{\alpha}$ for $|\alpha|=k$. The coefficients $ R_k(\lambda,\nu) $ are explicitly given by
$$ R_k(\lambda,\nu)  =  \frac{\Gamma(k+1) \Gamma(n)}{\Gamma(k+n)}       \int_{\Ha}  e^{i\lambda t} \varphi_{k,\lambda}^{n-1}(z) \, d\nu(z,t). $$ 
Recall that there exist a unique Radon measure $\sigma$, the surface measure, on $S_{K}(0,1)$ such that for every integrable function on $\H^n$ we have
\begin{equation*}
    \int_{\H^n} f(z,t) dz \, dt = \int_{0}^{\infty} \int_{S_{K}(0,1)} f(\delta_{r} y) r^{Q-1} d \sigma(y) dr.
\end{equation*}
For $r>0$, one defines the $r$-dilate, $\sigma_{r}$, of $\sigma$ by 
\begin{equation*}
    \langle \sigma_{r} , \phi \rangle := \int_{\Ha} \phi(\delta_{r}(x)) dx, \ \ \ \phi \in C_{c}(\Ha).
\end{equation*}
Then we have $ \widehat{\sigma_r}(\lambda)  = \sum_{k=0}^\infty  R_k(\lambda, \sigma_r) \mathcal{P}_{k} (\lambda)$, where 
\begin{equation}\label{Formula for coeficients of FT of sigma}
   \begin{split}
   R_k(\lambda, \sigma_r) & = \frac{\Gamma\big(\frac{n+1}{2}\big)}{\sqrt{\pi}\Gamma\big(\frac{n}{2}\big)}   \frac{\Gamma(k+1) \Gamma(n)}{\Gamma(k+n)} \int_{-\pi /2}^{\pi/2} \varphi_{k,\lambda}^{n-1}(r \sqrt{\cos \theta}) e^{i \lambda \frac{1}{4}r^2 \sin \theta} \,\,\, (\cos \theta)^{n-1} d \theta \\
     & =   c_{n} \frac{\Gamma(k+1)}{\Gamma(k+n)} \int_{- \pi/2}^{\pi/2} L_{k}^{n-1} \left( {\textstyle\frac{1}{2} } |\lambda |  r^2  \cos{\theta}  \right) \, e^{-\frac{1}{4}  |\lambda| r^2 \cos{\theta}}  e^{i \frac{1}{4} \lambda r^2\sin{\theta}} (\cos{\theta})^{n-1} d \theta.
   \end{split}
\end{equation}
The above integral cannot be evaluated in a closed form. Nevertheless, this representation of $R_{k}(\lambda, \sigma_{r})$ will be used extensively throughout the paper. In some earlier work of Fischer \cite{VF}, this is used to study the spherical means $ f \ast \sigma_r$ and corresponding spherical maximal function.

\subsection{Asymptotic of Laguerre polynomials} To study the behavior of the coefficients $R_k(\lambda, \sigma_r)$ given by (\ref{Formula for coeficients of FT of sigma}) one require the precise asymptotic expression of Laguerre polynomials $L_{k}^{n-1}$. In fact, there is a large body of literature concerning the asymptotic behavior of the Laguerre polynomials. However, for our purpose we will repeatedly use the asymptotic given below in \Cref{Laguerre-Asymp-Our-setup} which is based on  the following relatively simple asymptotic formula for Laguerre polynomial due to Erd\'elyi.
\begin{theorem}[\cite{Askey-Wainger}, p. 697] \label{Laguerre-Asymp}
Let $\alpha \geq 0$.
\begin{enumerate}
    \item [\emph{(\emph{i})}] \hspace*{.7mm} Given $0<b<1$, there exists $k_{0}$ such that if $k \geq k_{0}$ and $0 \leq x \leq b \,  \nu$, then
    \begin{equation} \label{Bessel.regime}
        \begin{split}
            L^{\alpha}_{k}(x) & = \frac{\Gamma(k + \alpha +1)}{ \Gamma(k+1)} 2^{\alpha - 1/2} \nu^{(1 - \alpha)/2} \,  x^{-(\alpha +1)/2} \\
            & \cdot e^{x/2} \left( \frac{  \psi( \frac{x}{\nu} )  }{  \psi^{ \prime} ( \frac{x}{\nu} )  }  \right)^{1/2} \left[ J_{\alpha} \left( \nu \, \psi( \textstyle\frac{x}{\nu} )  \right) +  O \left ( \frac{1}{\nu} \sqrt{\frac{x}{\nu - x}} \,   \widetilde{J_{\alpha}} \left( \nu \, \psi( \textstyle\frac{x}{\nu} )  \right)   \right) \right].
        \end{split}
    \end{equation}
    \item [\emph{(\emph{ii})}] \hspace*{.7mm} Given $a>0$, there exists $k_{0}$ such that if $k \geq k_{0}$ and $x \geq a \,  \nu$, then
    \begin{equation} \label{Bessel.regime}
        \begin{split}
            L^{\alpha}_{k}(x) & = \frac{  (-1)^{k} \pi^{1/2} 2^{5/6} N^{N + 1/6} e^{x/2}  
            }
            { \Gamma(k+1) \left( - \phi^{\prime}(\textstyle\frac{ x}{\nu})  \right)^{1/2} x^{(\alpha +1)/2} e^{N}
            }    \left[ \emph{Ai} \left( - \nu^{2/3} \, \phi( \textstyle\frac{x}{\nu} )  \right) +  O \left( {\frac{1}{x} } \, \widetilde{ \emph{Ai}} \left( - \nu^{2/3} \, \phi( {\textstyle\frac{x}{\nu} } )  \right)   \right) \right],
        \end{split}
    \end{equation}
\end{enumerate}
where, $\nu = 4N = 2 (2k + \alpha + 1)$,
\begin{equation}
    \psi(t) = \frac{1}{2} \left[ (t- t^2)^{1/2} + \sin^{-1}(t^{1/2}) \right], \ \ \ 0 \leq t <1,
\end{equation}
\begin{equation}
    \phi(t)= \left( \frac{3}{4} \right)^{2/3} \, \begin{cases}
                  \left[ \cos^{-1}{t^{1/2}} - (t - t^{2})^{1/2} \right]^{2/3}, & 0<t\leq 1,\\
                 - \left[ (t^2 - t)^{1/2} - \cosh^{-1} {t^{1/2}} \right]^{2/3},  & t>1,
                \end{cases}
\end{equation}
where,
\begin{equation*}
    \widetilde{J_{\alpha}}(u)= \begin{cases}
                 J_{\alpha}(u), & 0 \leq u \ll  1,\\
                 \left( |J_{\alpha}(u)|^2 + |Y_{\alpha}(u)|^2 \right)^{1/2},  & \text{otherwise},
                \end{cases}
\end{equation*}
and
\begin{equation*}
    \widetilde{ \emph{Ai}  }(u)= \begin{cases}
                 \emph{Ai}(u), & u \geq 0,\\
                 \left( |\emph{Ai}(u)|^2 + |\emph{Bi}(u)|^2 \right)^{1/2},  & u<0.
                \end{cases}
\end{equation*}
Here, $J_{\alpha}$ and $Y_{\alpha}$ are Bessel functions of order $\alpha \geq 0$ and $\emph{Ai}$, $\emph{Bi}$ are Airy functions. 
\end{theorem}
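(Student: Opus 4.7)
The plan is to prove both expansions by the Liouville--Green (LG) method applied to the Laguerre differential equation, using a Bessel comparator in regime (i) and an Airy comparator in regime (ii). First reduce to normal form: with $y(x) = L_k^\alpha(x)$, the substitution $y = e^{x/2} x^{-(\alpha+1)/2} u(x)$ eliminates the first-derivative term in $xy'' + (\alpha+1-x)y' + ky = 0$ and yields
\begin{equation*}
u''(x) + q(x) u(x) = 0, \qquad q(x) = \frac{\nu}{4x} - \frac{1}{4} + \frac{1-\alpha^2}{4x^2}, \qquad \nu = 2(2k+\alpha+1).
\end{equation*}
Here $q$ is positive on $(0,\nu)$, vanishes to leading order at the turning point $x = \nu$, and is negative for $x > \nu$; this change of sign is precisely the mechanism behind the Bessel/Airy dichotomy.

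For part (i), on $0 \le x \le b\nu$ introduce the LG variable $\zeta = \nu \psi(x/\nu)$. Differentiation shows $\psi'(t) = \tfrac{1}{2}\sqrt{(1-t)/t}$, which integrates to the closed form of $\psi$ in the theorem; moreover $(d\zeta/dx)^2 = q(x) - (1-\alpha^2)/(4x^2)$. Setting $u = (d\zeta/dx)^{-1/2} v(\zeta)$ converts the normal-form equation into the Bessel equation $v'' + (1 + (1/4-\alpha^2)/\zeta^2) v = 0$ plus a small Schwarzian remainder of size $O(\nu^{-2})$ away from the turning point. A Volterra-iteration argument against the Bessel kernel built from $J_\alpha$ and $Y_\alpha$ --- exactly the Olver framework of \emph{Asymptotics and Special Functions}, Ch.~12 --- produces $v(\zeta) = J_\alpha(\zeta)$ with remainder of the displayed shape $\nu^{-1}\sqrt{x/(\nu-x)}\,\widetilde{J_\alpha}(\zeta)$. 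Undoing the substitutions and pinning the overall constant by the exact value $L_k^\alpha(0) = \Gamma(k+\alpha+1)/[\Gamma(k+1)\Gamma(\alpha+1)]$, together with Stirling, produces the prefactor $\Gamma(k+\alpha+1)\Gamma(k+1)^{-1} 2^{\alpha-1/2} \nu^{(1-\alpha)/2}$ in the theorem.

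For part (ii), on $x \ge a\nu$ the turning point sits inside the range, forcing the Airy rather than the Bessel comparator. Define $\zeta$ implicitly by
\begin{equation*}
\tfrac{2}{3}(-\zeta)^{3/2} = \int_x^\nu \sqrt{q(s)}\,ds \ (x < \nu), \qquad \tfrac{2}{3}\zeta^{3/2} = \int_\nu^x \sqrt{-q(s)}\,ds \ (x > \nu),
\end{equation*}
and evaluate the radicals by the substitution $s = \nu\cos^2\theta$ (resp.\ $s = \nu\cosh^2\theta$); the result is precisely $\zeta = -\nu^{2/3}\phi(x/\nu)$ with the piecewise $\phi$ given in the theorem. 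The substitution $u = (d\zeta/dx)^{-1/2} v(\zeta)$ reduces the normal form to Airy's equation $v''(\zeta) = \zeta v(\zeta)$ plus a small perturbation, and Langer's Volterra iteration against the $(\mathrm{Ai},\mathrm{Bi})$ kernel yields $v(\zeta) = \mathrm{Ai}(\zeta) + O(x^{-1}\widetilde{\mathrm{Ai}}(\zeta))$ uniformly across and beyond the turning point. The overall multiplicative constant is then fixed by matching to the known large-$x$ behavior $L_k^\alpha(x) \sim (-x)^k/k!$ (equivalently, the decaying branch for $u$ as $x \to \infty$), which Stirling converts into the factor $\pi^{1/2} 2^{5/6} N^{N+1/6} e^{-N}/\Gamma(k+1)$.

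The main technical obstacle is the uniform-in-$\nu$ control of the LG remainder across the full range of $x$. The explicit shapes $\nu^{-1}\sqrt{x/(\nu-x)}$ and $x^{-1}$ of the errors are not abstract $O(\nu^{-1})$ bounds: they demand sharp estimates on the Schwarzian derivative $\{\zeta,x\}$ and on the Volterra kernels built from Bessel/Airy pairs, uniformly near the regular singular point $x = 0$ in (i) and through the turning point $x = \nu$ in (ii). This uniformity --- Erd\'elyi's refinement of the classical Plancherel--Rotach asymptotics --- is the delicate part of the argument; once the kernel bounds are secured, the remaining work (integrating to recover $\psi$ and $\phi$ explicitly, and assembling constants via $L_k^\alpha(0)$ and Stirling) is essentially bookkeeping.
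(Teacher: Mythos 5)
The paper does not prove this theorem at all: it is quoted verbatim from Askey and Wainger (\cite{Askey-Wainger}, p.~697), who in turn attribute it to Erd\'elyi, so there is no in-paper argument to compare your sketch against --- the statement enters the paper as a black box.

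Your Liouville--Green / Olver outline is the right route, and the algebraic preliminaries all check out: the substitution $y = e^{x/2}x^{-(\alpha+1)/2}u$ does yield $q(x)=\nu/(4x)-1/4+(1-\alpha^2)/(4x^2)$, the derivative $\psi'(t)=\tfrac12\sqrt{(1-t)/t}$ antidifferentiates to the stated $\psi$, and the change of variable $s=\nu\cos^2\theta$ in $\tfrac23(-\zeta)^{3/2}=\int_x^\nu\sqrt{\nu/(4s)-1/4}\,ds$ recovers $\phi$. But as written, this is a plan rather than a proof. The precise remainder shapes $\nu^{-1}\sqrt{x/(\nu-x)}\,\widetilde{J_\alpha}$ and $x^{-1}\widetilde{\mathrm{Ai}}$ --- uniformly down to the regular singular point $x=0$ in (i) and through the turning point $x=\nu$ in (ii) --- \emph{are} the theorem; your argument invokes ``the Olver framework'' and ``Langer's Volterra iteration'' for exactly these bounds instead of producing them, and you concede as much in the final paragraph. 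Until the Volterra iteration is actually carried out and the Schwarzian and Bessel/Airy kernel estimates are derived, the sketch reduces the theorem to the hard part of itself. That is perfectly reasonable as a motivating explanation of why the asymptotics take this form, but it is not a self-contained proof; to supply one you would need to state and verify the uniform kernel bounds (Olver's $\mathcal{E}$-functions and modulus functions for $J_\alpha, Y_\alpha$ and $\mathrm{Ai}, \mathrm{Bi}$) and track how the Schwarzian $\{\zeta, x\}$ behaves near $x = 0$ and $x = \nu$.
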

In view of the asymptotic of Bessel and Airy functions (see \cite{Leb72}), and the Stirling formula, the  above theorem when specialized to our set up leads to
\begin{lemma}\label{Laguerre-Asymp-Our-setup}
    Let $n \in \mathbb{N}$ and $k \in \mathbb{N}_{0}$. Let $0 < a < b < 1$. Set $\mu := 2 (2k +n)$. Then, there exists $k_{0} \in \mathbb{N}$, large enough, such that for $k \geq k_{0}$, the following hold: 
    \begin{enumerate}
        \item[\emph{(1)}] In the Bessel regime, that is $0 \leq x \leq b \mu$, one has
    \begin{enumerate}
 \item [\emph{(\emph{a})}] \hspace*{.7mm} If $x \,  \leq 1/ \mu$, then
    \begin{equation} \label{less than q by mu}
        \frac{\Gamma(k+1)}{\Gamma(k+n)} \,  L^{n-1}_{k}(x) \, e^{- \frac{1}{2} x} \,  x^{n-1} = C \,  x^{n-1} \left[1 + O \left(  (x \mu )^{1/2} \right) \right].
    \end{equation}  
 \item [\emph{(\emph{b})}] \hspace*{.7mm} If $ 1 / \mu \leq x \, \leq b \mu$, then
    \begin{equation}\label{1 by mu to mu}
       \begin{split}
            \frac{\Gamma(k+1)}{\Gamma(k+n)} \,  L^{n-1}_{k}(x) &  \, e^{- \frac{1}{2} x} \,  x^{n-1} = C \, x^{\frac{n}{2}-1} \mu^{-\frac{n}{2}+1}  \left( \frac{  \psi( \frac{x}{\mu} )  }{  \psi^{ \prime} ( \frac{x}{\mu} )  }  \right)^{1/2} \\
            & \times \left[  \sqrt{  \frac{2}{ \pi \mu \psi({\textstyle\frac{x}{\mu}})}      } \, \cos \left(  \mu \psi(\textstyle\frac{x}{\mu}) - \frac{\pi}{2}(n-1) - \frac{\pi}{4}      \right) + O \left(  (x \mu )^{-3/4} \right) \right]\\
           & \hspace*{5em}= x^{n-1} O \left(  (x \mu )^{-\frac{2n-1}{4}} \right).
       \end{split}
    \end{equation}      
    \end{enumerate}
     \item[\emph{(2)}] In the Airy regime, that is $a \mu \leq x \leq \frac{1}{a} \mu$,  one has
     \begin{enumerate}
 \item [\emph{(\emph{a})}] \hspace*{.7mm} If $ a \mu \leq x \leq  \mu - c \, \mu^{1/3}$, $0 < c \ll 1$, then    
 \begin{equation} \label{Bessel-Airy interface}
        \begin{split}
            \frac{\Gamma(k+1)}{\Gamma(k+n)} \,   & L^{n-1}_{k}(x) \, e^{- \frac{1}{2} x} \,  x^{n-1} \\
            & = (-1)^k \mu^{- \frac{4}{3} } \mu^{\frac{1}{12}} (\mu - x)^{-\frac{1}{4}}   \left[  \cos \left( {  \textstyle\frac{2}{3} |\xi(x)|^{3/2}  - \frac{\pi}{4} }     \right) + O \left( { \textstyle  \mu^{  \frac{1}{6}} \left( 1 - \frac{x}{\mu}  \right)^{- \frac{1}{2}} } \right)  \right],
        \end{split}
    \end{equation} 
    Here, $\xi(x) :=  - \mu^{2/3} \phi({\textstyle \frac{x}{\mu}})$. Moreover,    $ \xi(x) \sim - \mu^{2/3} \left( 1 - {\textstyle \frac{x}{\mu}}  \right)$, if $a \mu \leq x \leq \mu$ and $ \xi(x) \sim  \mu^{2/3} \left(  {\textstyle \frac{x}{\mu}}  -1 \right)$, if $x > \mu$.
 \item [\emph{(\emph{b})}] \hspace*{.7mm} If $ \mu - c \, \mu^{1/3} \leq x \leq \mu + c \, \mu^{1/3}$, $0 < c \ll 1$, then
    \begin{equation} \label{around mu}
        \begin{split}
            \frac{\Gamma(k+1)}{\Gamma(k+n)} \,   & L^{n-1}_{k}(x) \, e^{- \frac{1}{2} x} \,  x^{n-1} \\
            & = \, (-1)^{k} \mu^{{-\textstyle\frac{n}{2}} - \frac{7}{12} }  x^{ {\textstyle\frac{n}{2} - \frac{3}{4}} } \left[ \emph{Ai}( \xi(x))   +  O \left( {\textstyle \frac{1}{x} } \, \emph{Ai}( \xi(x)) \right)  \right]\\
            &= O \big(  \mu^{{-4/3 }  } \big).
        \end{split}
    \end{equation}  
    
 \item [\emph{(\emph{c})}] \hspace*{.7mm} If $ \mu + c \, \mu^{1/3} \leq x \, \leq {\textstyle \frac{3}{2}} \mu$, then
    \begin{equation}\label{Airy merging with exponential}
        \begin{split}
            \frac{\Gamma(k+1)}{\Gamma(k+n)} \,   & L^{n-1}_{k}(x) \, e^{- \frac{1}{2} x} \,  x^{n-1} \\
            & = \, (-1)^{k} \mu^{{-\textstyle\frac{4}{3}} } \,   \mu^{ {\textstyle\frac{1}{12} } }  (x- \mu)^{ - {\textstyle\frac{1}{4}  } } \, e^{ - {\frac{2}{3}  } \mu^{- \frac{1}{2}} (x- \mu)^{3/2}    } \left[   1 + O \left(   \mu^{-\frac{1}{4}} (x - \mu )^{- \frac{3}{4}}  \right)    \right].
        \end{split}
    \end{equation}  
    \end{enumerate}
    \item[\emph{(3)}] In the exponential regime, that is $x \geq \frac{3}{2} \mu$, one has
     \begin{equation}\label{expo regime}
        \begin{split}
            \frac{\Gamma(k+1)}{\Gamma(k+n)} \,   & L^{n-1}_{k}(x) \, e^{- \frac{1}{2} x} \,  x^{n-1} = O \left( \mu^{ -\frac{n}{2}-\frac{1}{2}} x^{\frac{n}{2}-1} e^{- \frac{2}{3} x}  \right).
        \end{split}
    \end{equation} 
\end{enumerate}
Here, the implied constants in the asymptotic notation may depend on $n, k_{0}$. 
\end{lemma}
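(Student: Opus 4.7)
The plan is to derive \Cref{Laguerre-Asymp-Our-setup} as a case-by-case specialization of Erd\'elyi's \Cref{Laguerre-Asymp} with $\alpha = n-1$, so that the parameter $\nu$ there agrees with our $\mu = 2(2k+n)$. The only universal simplification done once and for all at the start is Stirling's formula, which yields
\begin{equation*}
\frac{\Gamma(k+1)}{\Gamma(k+n)} = \frac{1}{\Gamma(n)}\,\mu^{1-n}\bigl(1+O_{n}(1/\mu)\bigr),
\end{equation*}
so the prefactor on the left-hand side of the identities in the lemma can be replaced by $c_{n}\mu^{1-n}$ modulo lower-order terms. With this understood, the remaining work is to substitute the asymptotic expansions of $J_{n-1}$ and $\mathrm{Ai}$ into Erd\'elyi's formula in each prescribed range of $x$ and collect powers of $x$ and $\mu$.

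In the Bessel regime, I would use part (i) of \Cref{Laguerre-Asymp} together with the elementary expansion $\psi(t) = t^{1/2} + O(t^{3/2})$ and $\psi'(t)\sim\tfrac12 t^{-1/2}$ near $0$, so that $\mu\psi(x/\mu) \simeq (x\mu)^{1/2}$ and $\psi/\psi' \simeq t$ for small $t$. In subregime (1a), $(x\mu)^{1/2}\lesssim 1$, I would plug in the power-series expansion $J_{n-1}(u) = u^{n-1}/(2^{n-1}\Gamma(n))\bigl[1+O(u^{2})\bigr]$; the powers of $\mu$ cancel against $\nu^{(1-\alpha)/2} = \mu^{(2-n)/2}$ and the dominant exponentials $e^{x/2}$ from Erd\'elyi and $e^{-x/2}$ from our statement collapse, producing the stated $Cx^{n-1}[1+O((x\mu)^{1/2})]$. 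In subregime (1b), $(x\mu)^{1/2}\gtrsim 1$, I would instead use the large-argument Bessel expansion $J_{n-1}(u) = \sqrt{2/(\pi u)}\cos(u - (n-1)\pi/2 - \pi/4) + O(u^{-3/2})$, and the extra $u^{-1/2}$ converts into the advertised $(x\mu)^{-(2n-1)/4}$.

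In the Airy regime I would work from part (ii) of \Cref{Laguerre-Asymp}, first using Stirling on $N^{N+1/6}e^{-N}/\Gamma(k+1)$ (where $4N = \mu$) to extract the clean powers $\mu^{-4/3}$ and $\mu^{-n/2-7/12}$ appearing in the conclusion. In subregime (2a), $\xi(x) = -\mu^{2/3}\phi(x/\mu)$ is large negative, and $\phi(t)\sim(3/4)^{2/3}[\tfrac{4}{3}(1-t)]^{2/3}/\cdot\cdot\cdot$ gives $|\xi| \sim \mu^{2/3}(1 - x/\mu)$; I would substitute the oscillatory Airy asymptotic $\mathrm{Ai}(-s) \sim \pi^{-1/2}s^{-1/4}\cos(\tfrac{2}{3}s^{3/2}-\pi/4)$ and collect factors. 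In (2b) the turning-point regime $|x-\mu|\lesssim \mu^{1/3}$ forces $\xi(x)=O(1)$ and $\mathrm{Ai}(\xi(x))=O(1)$; the size statement $O(\mu^{-4/3})$ follows because $\mu^{-n/2-7/12} x^{n/2-3/4}\simeq \mu^{-4/3}$ for $x\simeq\mu$. In (2c) and (3), $\xi(x)$ is large positive and I would use the exponentially decaying Airy tail $\mathrm{Ai}(s) \sim \tfrac{1}{2\sqrt{\pi}}s^{-1/4}e^{-2s^{3/2}/3}$; combining $s^{3/2}\simeq\mu^{-1/2}(x-\mu)^{3/2}$ with the $e^{-x/2}$ multiplier (and, for case (3), noting that $(x-\mu)^{3/2}/\mu^{1/2} \gtrsim x$ once $x\geq\tfrac32\mu$) yields the stated exponentials.

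The main obstacle I anticipate is purely algebraic bookkeeping: matching the gamma, power, and exponential factors in Erd\'elyi's general formula with the clean $\mu$-powers that appear in our lemma. In particular, the prefactor in part (ii) of \Cref{Laguerre-Asymp} mixes $N^{N+1/6}e^{-N}/\Gamma(k+1)$ with $(-\phi'(x/\nu))^{-1/2}$ and $x^{-(\alpha+1)/2}$, and one must carefully re-express $N^{N+1/6}e^{-N}$ via Stirling as $\Gamma(N+1)N^{-5/6}(2\pi)^{-1/2}$ and track the resulting power of $\mu$ through each subregime. Once this bookkeeping is done uniformly and the error terms $\widetilde{J_{\alpha}}$ and $\widetilde{\mathrm{Ai}}$ are controlled by their leading-order Bessel/Airy behavior, all seven assertions follow by direct substitution and simplification; no new analytic input beyond the standard asymptotics of $J_{n-1}$, $\mathrm{Ai}$, $\mathrm{Bi}$, and Stirling's formula is needed.
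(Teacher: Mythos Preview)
Your proposal is correct and is precisely the approach the paper takes: the paper states the lemma as an immediate consequence of Erd\'elyi's \Cref{Laguerre-Asymp} (with $\alpha=n-1$, so $\nu=\mu$) combined with Stirling's formula and the standard small/large-argument asymptotics of $J_{n-1}$, $\mathrm{Ai}$, $\mathrm{Bi}$, exactly as you outline. The paper gives no further details beyond this reduction, so your case-by-case substitution and bookkeeping is all that is required.
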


\section{Proof of Theorem \ref{all-large-dist-thm Hn}: Reduction to quantitative formulation}\label{mainresultproof}
In this section, we deduce \Cref{all-large-dist-thm Hn} from \Cref{FKW-quantitative}. This deduction  is straightforward extension of the corresponding Euclidean argument by Bourgain. Assume for the sake of contradiction that \Cref{all-large-dist-thm Hn} failed, then one can find a set $A \subseteq \mathbb{H}^n$ of positive upper density $\delta>0$, and a sequence of scales $l_{1}< l_{2}< \cdots$ going to  infinity such that for each $l_{j}$ there are no $x,y \in A$ with $|x \cdot y^{-1}|_{K}= l_{j}$. Sparsifying this sequence of scales so that we have $l_{j+1} \geq 2l_{j}$ for all $j$. Let $\epsilon>0$ be sufficiently small (depending only on $\delta$) and let $J$ be as in \Cref{FKW-quantitative}. As $A$ has density $\delta$, one can find $x_0\in \mathbb H^n$ and  a radius $R> l_{J}$ so that 
\begin{equation*}
    \frac{|A \cap B(x_0,R)|}{|B(x_0,R)|}\geq \frac{1}{2} \delta,
\end{equation*}
or equivalently
\begin{equation*}
    |A \cap B(x_0,R)| \gtrsim \delta \, R^{Q}, \ \ \ Q=2n+2.
\end{equation*}
If we consider the rescaling $E=\lbrace x \in B(0,1) : \delta_{R}x\cdot x_0 \in A  \rbrace = B(0,1) \cap \delta_{1/R}\left( x_0^{-1} \cdot A \right)$, and define $0< r_{J}<\cdots <r_{1} \leq 1$ by
\begin{equation*}
    r_{j}:= l_{J+1-j} \Big{/} R,
\end{equation*}
for $j=1, \cdots, J$, then $|E| \gtrsim \delta  \ (\gg \epsilon)$ and $r_{j+1}= \frac{1}{R} l_{J-j} \leq \frac{1}{R} \frac{1}{2} l_{J-j+1}= \frac{1}{2} r_{j}$, for all $1 \leq j \leq J$; and for any $1 \leq j \leq J$ there are no points $x, y \in E$ so that $d_K(x,y)=r_{j}$.  In particular, the left hand side of (\ref{Main inequality}) vanishes for all $1 \leq j \leq J$,  contradicting \Cref{FKW-quantitative}.

\section{Proof of Theorem \ref{FKW-quantitative}}\label{mainpropproof}
Fix $0 < \epsilon < 1/2$. Let us also fix $E$ to be a Lebesgue measurable set inside $B(0,1) \subset \H^n$ such that $|E| \geq \epsilon$. Set $f=1_{E}$, the indicator function of the set $E$. A short Fourier-analytic calculation reveals that the integral in (\ref{Main inequality}), at the radius $r$, may be written as
\begin{equation*}
    \begin{split}
        \int_{\H^n} & \int_{S_{K}(0,1)} 1_{E}(x) 1_{E}( x \cdot \delta_{r}y ) \, d \sigma(y) \,  dx 
     = (2\pi)^{-n-1}  \int_{\R^{*}} \sum_{k=0}^{\infty} R_{k}(\lambda r^{2},\sigma) \sum_{|\alpha|=k} \langle \hat{f}(\lambda)^{*} \hat{f}(\lambda) \Phi_{\alpha}^{\lambda},\Phi_{\alpha}^{\lambda} \rangle \,  |\lambda|^n d\lambda,
    \end{split}
\end{equation*}
where $R_{k}(\lambda, \sigma)$ are the coefficients of the group Fourier transform of the surface measure on the Kor\'anyi sphere : $\hat{\sigma}(\lambda)= \sum_{k=0}^{\infty} R_{k}(\lambda, \sigma) P_{k}(\lambda)$, which from (\ref{Formula for coeficients of FT of sigma}) are given by
\begin{equation*}
    R_{k}(\lambda, \sigma)=   c_{n} \frac{\Gamma(k+1)}{\Gamma(k+n)} \int_{- \pi/2}^{\pi/2} L_{k}^{n-1} \left( {\textstyle\frac{1}{2} } |\lambda |    \cos{\theta}  \right) \, e^{-\frac{1}{4}  |\lambda|  \cos{\theta}}  e^{i \frac{1}{4} \lambda \sin{\theta}} (\cos{\theta})^{n-1} d \theta.
\end{equation*}
Let $J=J(\epsilon)$ be a large positive integer depending only on $\epsilon$. Given any sequence of sufficiently small scales $0 < r_{J} < \cdots < r_{1} \leq 1 $ depending on all previous ones and $\epsilon$ as $r_{j+1} \leq r_{j}/2$ for all $1 \leq j < J$, we need to ensure that  for at least one $1 \leq j \leq J$, one has
\begin{equation}\label{euivalent main ineq}
   \int_{\R^{*}} \sum_{k=0}^{\infty} R_{k}(\lambda r_{j}^{2},\sigma) \sum_{|\alpha|=k} \langle \hat{f}(\lambda)^{*} \hat{f}(\lambda) \Phi_{\alpha}^{\lambda},\Phi_{\alpha}^{\lambda} \rangle \ |\lambda|^n d \lambda \gtrsim \epsilon^{2}.
\end{equation}
As in the 1986 paper of Bourgain, \cite{1986Bourgain}, we will follow the energy pigeonholing argument. Thus, for $ r_{j}^2 < \delta \ll 1$ to be specified later,  we can split the the integral in (\ref{euivalent main ineq}) into the contribution of the  ``low frequencies"
\begin{equation}\label{Low}
    \int_{\R^{*}} \sum_{k \, : \, 2(2k+n) |\lambda | r_{j}^2 \leq \delta } R_{k}(\lambda r_{j}^{2},\sigma) \sum_{|\alpha|=k} \langle \hat{f}(\lambda)^{*} \hat{f}(\lambda) \Phi_{\alpha}^{\lambda},\Phi_{\alpha}^{\lambda} \rangle \ |\lambda|^n d\lambda,
\end{equation}
the ``medium frequencies"
\begin{equation}\label{Medium}
    \int_{\R^{*}} \sum_{k \, : \, \delta / r_{j}^{2}  \leq 2(2k+n) |\lambda |  \leq 1/ (\delta r_{j}^2) } R_{k}(\lambda r_{j}^{2},\sigma) \sum_{|\alpha|=k} \langle \hat{f}(\lambda)^{*} \hat{f}(\lambda) \Phi_{\alpha}^{\lambda},\Phi_{\alpha}^{\lambda} \rangle \ |\lambda|^n d\lambda,
\end{equation}
and the ``high frequencies"
\begin{equation}\label{High}
    \int_{\R^{*}} \sum_{k \, : \,  2(2k+n) |\lambda |  >  1/ (\delta r_{j}^2) } R_{k}(\lambda r_{j}^{2},\sigma) \sum_{|\alpha|=k} \langle \hat{f}(\lambda)^{*} \hat{f}(\lambda) \Phi_{\alpha}^{\lambda},\Phi_{\alpha}^{\lambda} \rangle \ |\lambda|^n d\lambda.
\end{equation}

\subsection{Low frequency part } \label{Low part}
In this subsection we tackle the low frequency part of the expression in the left hand side of (\ref{euivalent main ineq}), namely, the region where $\mu |\lambda|   r_{j}^2 \leq \delta  \ll 1$, where $\mu = 2(2k+n)$. We begin with observing the behavior of the coefficients of the surface measure in this low frequency case.
\begin{lemma}\label{Surface coeff. low}
    Let $\delta \ll 1$. If $\mu |\lambda| \leq \delta \ll 1$ then one has 
    \begin{equation}\label{Low frequency surface coeff}
        R_{k}(\lambda, \sigma) =  C_{n} + O \big( (\mu |\lambda|)^{1/2} \big),
    \end{equation}
    for some $C_{n}>0$, where the implied constant in the asymptotic notation  depends on $n$.
\end{lemma}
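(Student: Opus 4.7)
The plan is to apply the very-small-$x$ Bessel-regime asymptotic of \Cref{Laguerre-Asymp-Our-setup}(1)(a) to the Laguerre factor inside the integral defining $R_k(\lambda,\sigma)$, producing a fixed dimensional constant as the main term modulo an $O((x\mu)^{1/2})$ correction, and then Taylor-expanding the oscillatory factor $e^{i\lambda\sin\theta/4}$ in the small parameter $|\lambda|$.

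Setting $x := \tfrac{1}{2}|\lambda|\cos\theta$, the hypothesis $|\lambda|\mu \le \delta \ll 1$ gives $x\mu \le \tfrac{1}{2}\delta \le 1$ uniformly in $\theta \in [-\pi/2,\pi/2]$, so the very-small-$x$ asymptotic applies throughout the range of integration. Matching the leading constant by letting $x \downarrow 0$ (using $L_k^{n-1}(0) = \binom{k+n-1}{k}$ and $\tfrac{\Gamma(k+1)}{\Gamma(k+n)}\binom{k+n-1}{k} = \tfrac{1}{\Gamma(n)}$) identifies it as $1/\Gamma(n)$, and so for $k \ge k_0$,
\begin{equation*}
\frac{\Gamma(k+1)}{\Gamma(k+n)}\, L_k^{n-1}(x)\, e^{-x/2} = \frac{1}{\Gamma(n)} + O\bigl((x\mu)^{1/2}\bigr).
\end{equation*}
For the finitely many $k < k_0$, $\mu$ is bounded depending only on $n$ and $k_0$, and $L_k^{n-1}$ is a polynomial of bounded degree, so a direct Taylor expansion at $x = 0$ produces the same leading term with error $O_{n,k_0}(x)$. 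Since $x \le |\lambda|/2$ and $|\lambda|\mu \le 1$ together imply $|\lambda| \le (|\lambda|\mu)^{1/2}$, this error is again $O((|\lambda|\mu)^{1/2})$, uniformly in $\theta$.

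Substituting the uniform expansion into the integral formula for $R_k(\lambda,\sigma)$, and using that $\int_{-\pi/2}^{\pi/2}(\cos\theta)^{n-1}\,d\theta$ is a positive finite constant, yields
\begin{equation*}
R_k(\lambda,\sigma) = \frac{c_n}{\Gamma(n)} \int_{-\pi/2}^{\pi/2} e^{i\lambda\sin\theta/4}(\cos\theta)^{n-1}\,d\theta + O\bigl((|\lambda|\mu)^{1/2}\bigr).
\end{equation*}
The imaginary part of the integrand is $\sin(\tfrac{\lambda}{4}\sin\theta)(\cos\theta)^{n-1}$, which is odd in $\theta$ and integrates to zero. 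The real part equals $(\cos\theta)^{n-1} + O(\lambda^2)$ by Taylor expansion of $\cos$; and since $|\lambda|\mu \le 1$ forces $\lambda^2 \le |\lambda| \le (|\lambda|\mu)^{1/2}$, this $\lambda^2$ contribution is absorbed into the existing error, giving
\begin{equation*}
R_k(\lambda,\sigma) = \frac{c_n}{\Gamma(n)} \int_{-\pi/2}^{\pi/2}(\cos\theta)^{n-1}\,d\theta + O\bigl((|\lambda|\mu)^{1/2}\bigr),
\end{equation*}
which is of the claimed form $C_n + O((|\lambda|\mu)^{1/2})$ with $C_n = (c_n/\Gamma(n))\int_{-\pi/2}^{\pi/2}(\cos\theta)^{n-1}\,d\theta > 0$.

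The only technical point is obtaining a $k$-uniform expansion of the Laguerre--exponential factor; this splits into the large-$k$ case (Erd\'elyi's asymptotic) and the small-$k$ case (direct polynomial Taylor expansion), with both regimes conveniently producing the same leading constant $1/\Gamma(n)$. No delicate oscillatory-integral analysis of the complex-exponential phase is needed in this low-frequency regime, because the smallness of $|\lambda|$ (a consequence of $|\lambda|\mu \le \delta$) already makes that phase nearly trivial.
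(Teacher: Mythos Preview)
Your proof is correct and follows essentially the same approach as the paper: both split into the regimes $k \ge k_0$ (where Erd\'elyi's Bessel-regime asymptotic \eqref{less than q by mu} applies) and $k < k_0$ (handled by direct Taylor expansion of the Laguerre polynomial at $0$), and both dispose of the factor $e^{i\lambda\sin\theta/4}$ using the smallness of $|\lambda|$. The only cosmetic difference is that the paper performs the change of variable $x = \tfrac{1}{2}\lambda\cos\theta$ before invoking the asymptotic, whereas you apply it pointwise in the $\theta$-integral; your error bookkeeping for the oscillatory factor (via $|\lambda| \le (|\lambda|\mu)^{1/2}$) is in fact slightly more explicit than the paper's.
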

\begin{proof}
By symmetry, we can assume $\lambda>0$ and re-write $R_{k}(\lambda, \sigma)$ as 
\begin{equation*}\label{0 Coeff. Rk as Laguerre integral}
   \begin{split}
        R_{k}(\lambda, \sigma) 
        & = c_{n} \frac{\Gamma(k+1)}{\Gamma(k+n)} \int_{- \pi/2}^{\pi/2} L_{k}^{n-1} \left( {\textstyle\frac{1}{2} } \lambda  \cos{\theta}  \right) \, e^{-\frac{1}{4} \lambda \cos{\theta}}  \cos{ \big( {\textstyle \frac{1}{4} } \lambda \sin{\theta} \big)}     (\cos{\theta})^{n-1} d \theta.
   \end{split}
\end{equation*}
Since here $0 < \lambda \ll 1$ so it suffices to show
\begin{equation}\label{0 Coeff. Rk as Laguerre integral}
   \begin{split}
       \frac{\Gamma(k+1)}{\Gamma(k+n)} \int_{- \pi/2}^{\pi/2} L_{k}^{n-1} \left( {\textstyle\frac{1}{2} } \lambda  \cos{\theta}  \right) \, e^{-\frac{1}{4} \lambda \cos{\theta}}      (\cos{\theta})^{n-1} d \theta =  C_{n} + O \big( (\mu |\lambda|)^{1/2} \big),
   \end{split}
\end{equation}
where the implied constants in the asymptotic notation are allowed to depend on $n$.

\noindent Fix $k_{0}$ to be the large natural number as in the \Cref{Laguerre-Asymp-Our-setup}. Making a change of variable  $x = \frac{1}{2} \lambda \cos{\theta}$, the left side of (\ref{0 Coeff. Rk as Laguerre integral}) becomes 
\begin{equation}\label{0 x var. lambda less than 1 Coeff. Rk}
   \begin{split}
        \lambda^{-n +1} \frac{\Gamma(k+1)}{\Gamma(k+n)}  \int_{0}^{ \lambda /2}  L_{k}^{n-1}(x) \, e^{-\frac{1}{2} x }  x^{n-1} \frac{dx}{\sqrt{(\lambda/2)^2 -  x^2}},
   \end{split}
\end{equation}
which, being in the Bessel regime $0 \leq  x \leq \lambda \ll 1 / \mu$, using the asymptotics (\ref{less than q by mu}) for  $k \geq k_{0}$,  can be written as
\begin{equation*}\label{-1 x var. lambda less than 1 Coeff. Rk}
   \begin{split}
     C_{n} \,   \lambda^{-n +1}   \int_{0}^{ \lambda /2}  x^{n-1} \left[1 + O_{k_{0}} \left(  (x \mu )^{1/2} \right) \right] \frac{dx}{\sqrt{(\lambda/2)^2 -  x^2}}.
   \end{split}
\end{equation*}
By rescaling, the last expression equals a positive constant (depending only on $n$) times the expression
\begin{equation*}
    \int_{0}^{1} \frac{x^{n-1}}{\sqrt{1 - x^2}} dx + O_{k_{0}}( (\lambda \mu)^{1/2}),
\end{equation*}
which {leads the desired estimate (\ref{0 Coeff. Rk as Laguerre integral}) but for $k \geq k_{0}$.}  

The case of bounded $k$'s easily follows from the behavior (near $0$) of the explicit expression of Laguerre polynomials. Indeed, if $k \leq k_{0}$, then we are essentially in the region $0 < \lambda \ll_{k_{0}} \delta$. Thus, for  Laguerre polynomials, $L_{k}^{n-1}$, with their degrees bounded by $k_{0}$, one has $L_{k}^{n-1}(x) = \frac{\Gamma(k+n)}{\Gamma(k+1) \Gamma(n)} + O_{k_{0}}(x)$ valid for small $x$ uniformly for all $0 \leq k \leq k_{0}$. Substituting this into
(\ref{0 x var. lambda less than 1 Coeff. Rk}) one has that for $\lambda$ sufficiently small (\ref{0 x var. lambda less than 1 Coeff. Rk}) becomes a positive constant (depending on $n, k_{0}$) times the expression
\begin{equation}\label{0 x var. lambda less than 1 Coeff. Rk 1}
   \begin{split}
        \lambda^{-n +1}  \int_{0}^{ \lambda /2}  [1 + O_{k_{0}}(x)] \, e^{-\frac{1}{2} x }  x^{n-1} \frac{dx}{\sqrt{(\lambda/2)^2 -  x^2}},
   \end{split}
\end{equation}
which yields (\ref{0 Coeff. Rk as Laguerre integral}) for all $k \leq k_{0}$. 

Altogether, these gives the claimed estimate (\ref{0 Coeff. Rk as Laguerre integral}) and hence (\ref{Low frequency surface coeff}), and the lemma is proved.
\end{proof}

We now return to the low frequency estimate
\begin{lemma}\label{the LFE} For $1 \gg \delta \geq r^{2}$, there exists an absolute constant $C_{n}$ such that
\begin{equation}\label{low frequency contri}
    \int_{\R^{*}} \sum_{k \, : \, 2(2k +n) |\lambda | r^2 \leq \delta } R_{k}(\lambda r^{2},\sigma) \sum_{|\alpha|=k} \langle \hat{f}(\lambda)^{*} \hat{f}(\lambda) \Phi_{\alpha}^{\lambda},\Phi_{\alpha}^{\lambda} \rangle \ |\lambda|^n d\lambda \geq C_{n} |E|^2.
\end{equation}
\end{lemma}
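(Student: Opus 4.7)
The first step is to use \Cref{Surface coeff. low} to reduce (\ref{low frequency contri}) to a non-negative quantity. Choosing $\delta$ small enough that the error term $O(\delta^{1/2})$ in that lemma is at most $C_n/2$, we have $R_k(\lambda r^2, \sigma) \geq C_n/2 > 0$ throughout the low-frequency set. Since each summand $\sum_{|\alpha| = k} \langle \hat f(\lambda)^* \hat f(\lambda) \Phi_\alpha^\lambda, \Phi_\alpha^\lambda \rangle = \|\hat f(\lambda)\mathcal{P}_k(\lambda)\|_{HS}^2$ is automatically non-negative, it suffices to extract a single summand with integrated contribution $\gtrsim_n |E|^2$. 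I will take $k=0$ together with $|\lambda|$ small: because $r^2 \leq \delta$, the index $k = 0$ lies in the cutoff $2n|\lambda|r^2 \leq \delta$ for all $|\lambda| \leq 1/(2n)$, and the identity (\ref{twisted convol identity}) with $\varphi_{0,\lambda}^{n-1}(z) = e^{-|\lambda||z|^2/4}$ gives
\[ \sum_{|\alpha|=0} \langle \hat f(\lambda)^* \hat f(\lambda) \Phi_\alpha^\lambda, \Phi_\alpha^\lambda \rangle = (2\pi)^{-n} |\lambda|^n \bigl\|f^\lambda *_\lambda \varphi_{0,\lambda}^{n-1}\bigr\|_2^2. \]

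The main step is a pointwise lower bound on this twisted convolution. Substituting $u = z - w$,
\[ f^\lambda *_\lambda \varphi_{0,\lambda}^{n-1}(z) = \int_{\C^n} f^\lambda(u)\, e^{-|\lambda||z-u|^2/4}\, e^{-i\lambda\, \im(z\bar u)/2}\, du. \]
Since $E \subset B(0,1)$ in the Kor\'anyi metric, both $|u|<1$ for every $u \in \operatorname{supp}(f^\lambda)$ and $|t|<1$ for every $(u,t) \in E$. Restricting to $|z| \leq 1$, both exponential factors above are $1 + O(|\lambda|)$ on the effective integration domain, while slicing $E$ in the central variable yields
\[ \int_{\C^n} f^\lambda(u)\, du = \int_{-1}^1 g(t)\, e^{i\lambda t}\, dt = |E| + O(|\lambda|\,|E|), \qquad g(t) := |\{u : (u,t) \in E\}|. \]
Combining these shows $|f^\lambda *_\lambda \varphi_{0,\lambda}^{n-1}(z)| \geq |E|/2$ uniformly for $|z| \leq 1$ and $|\lambda| \leq \lambda_1$, where $\lambda_1 > 0$ is an absolute constant; integrating over $|z| \leq 1$ then gives $\|f^\lambda *_\lambda \varphi_{0,\lambda}^{n-1}\|_2^2 \gtrsim_n |E|^2$ throughout this range of $\lambda$.

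Finally, integrating in $\lambda$ over $|\lambda| \leq \lambda_* := \min\{1/(2n),\, \lambda_1\}$,
\[ \int_{\R^*} \sum_{k \,:\, \mu|\lambda| r^2 \leq \delta} \sum_{|\alpha|=k} \langle \hat f(\lambda)^* \hat f(\lambda) \Phi_\alpha^\lambda, \Phi_\alpha^\lambda \rangle |\lambda|^n d\lambda \geq (2\pi)^{-n} \int_{|\lambda| \leq \lambda_*} |\lambda|^{2n}\, \bigl\|f^\lambda *_\lambda \varphi_{0,\lambda}^{n-1}\bigr\|_2^2\, d\lambda \gtrsim_n |E|^2. \]
Multiplying by the factor $\geq C_n/2$ from the first step yields the desired bound. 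The main obstacle is precisely the pointwise lower bound on the twisted convolution: it depends crucially on the support condition $E \subset B(0,1)$ to keep both the Gaussian factor and the twisting phase uniformly near $1$. This is the Heisenberg analogue of Bourgain's observation in the Euclidean setting that $|\widehat{1_E}(\xi)| \gtrsim |E|$ on a bounded frequency ball.
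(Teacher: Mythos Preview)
Your proof is correct and, in fact, more economical than the paper's own argument. Both proofs begin the same way: invoke \Cref{Surface coeff. low} to get $R_k(\lambda r^2,\sigma) \geq C_n/2$ on the low-frequency set, reducing the problem to lower-bounding the nonnegative quantity $\int \sum_k \|f^\lambda *_\lambda \varphi_{k,\lambda}^{n-1}\|_2^2 \, |\lambda|^{2n}\, d\lambda$. The divergence is in how that lower bound is produced. The paper establishes the pointwise approximation $f^\lambda *_\lambda \varphi_{k,\lambda}^{n-1}(z) \approx C_n' \mu^{n-1}|E|$ for \emph{all} $k \geq k_0$ (using the Laguerre asymptotic (\ref{less than q by mu}) in the Bessel regime), then sums over $k_0 \leq k \lesssim \lambda^{-1}$ and integrates over a shrinking ball $|z| \leq \mu^{-M}$. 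You instead keep only the single term $k=0$, where $\varphi_{0,\lambda}^{n-1}(z) = e^{-|\lambda||z|^2/4}$ is explicit, and obtain the pointwise bound $|f^\lambda *_\lambda \varphi_{0,\lambda}^{n-1}(z)| \geq |E|/2$ directly from the support condition $E \subset B(0,1)$, without any appeal to Laguerre asymptotics. Your route is shorter and entirely self-contained; the paper's route, while more elaborate here, is consistent with the machinery (\Cref{Laguerre-Asymp-Our-setup}) already set up for the high-frequency analysis.
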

\begin{proof}
Let us abbreviate  $\mu = 2(2k +n)$. We recall that $f= 1_{E}$ as well as the identity 
\begin{equation}\label{twisted convol identity1}
  \sum_{|\alpha|=k} \langle \widehat{f}(\lambda)^{*} \widehat{f}(\lambda) \Phi_{\alpha}^{\lambda} , \Phi_{\alpha}^{\lambda}  \rangle = (2 \pi)^{-n} |\lambda|^n \,   \| f^{\lambda} \ast_{\lambda} \varphi_{k, \lambda}^{n-1} \|_{2}^2,
\end{equation}
which is just (\ref{twisted convol identity}).

Note that,  in view of \Cref{Surface coeff. low} (applied at $\lambda r^2$ instead of $\lambda$) and the identity \eqref{twisted convol identity1}, there exists a positive constant $C_{n}$ so that
\begin{eqnarray}
\nonumber && \left|\int_{\R^{*}} \sum_{k \, : \, 2(2k +n) |\lambda | r^2 \leq \delta} \left(R_{k}(\lambda r^{2},\sigma) -C_n\right)\sum_{|\alpha|=k} \langle \hat{f}(\lambda)^{*} \hat{f}(\lambda) \Phi_{\alpha}^{\lambda},\Phi_{\alpha}^{\lambda} \rangle \ |\lambda|^n d\lambda\right|\\
&&\leq C_{n} \sqrt{\delta} \int_{\R^{*}} \sum_{k \, : \, 2(2k +n) |\lambda | r^2 \leq \delta}\| f^{\lambda} \ast_{\lambda} \varphi_{k, \lambda}^{n-1} \|_{2}^2 \ |\lambda|^{2n} d\lambda \label{eq:CompuLowThorough}
\end{eqnarray}
Hence, with an appropriate choice of $\delta<\frac14$, we see from  \eqref{twisted convol identity1} and \eqref{eq:CompuLowThorough} that the left-hand  side of \eqref{low frequency contri} is  bounded below by
    \begin{equation}\label{low frequency contri 1}
    \frac12C_n\int_{\R^{*}} \sum_{k \, : \, \mu |\lambda | r^2 \leq \delta } \,   \| f^{\lambda} \ast_{\lambda} \varphi_{k, \lambda}^{n-1} \|_{2}^2 \ |\lambda|^{2n} d\lambda.
\end{equation}
Let $\lambda>0$. We now concentrate on the factor $\| f^{\lambda} \ast_{\lambda} \varphi_{k, \lambda}^{n-1} \|_{2}^2$ in the above expression. For this we shall prove that if  $\mu \lambda \ll 1$ for $k \geq k_{0}$, with $k_{0}$ being a large natural number as in \Cref{Laguerre-Asymp-Our-setup}, and $| z| \leq \rho_{k}$ (for some $\rho_{k} \ll 1$ to be optimized later) we have
\begin{equation}\label{twisted term near 0}
    f^{\lambda} \ast_{\lambda} \varphi_{k, \lambda}^{n-1} \, (z) = C_{n}^{\prime} \, \mu^{n-1} |E| +   O (  \mu^{n-1} (\lambda \mu)^{1/2} |E|).
\end{equation}
Note that for $\delta \geq r^2$ the approximation \eqref{twisted term near 0} gives the lower bound for (\ref{low frequency contri 1}). Indeed, for appropriate positive constants $a_n,b_n$,  the expression \eqref{low frequency contri 1} is

\begin{equation*}
    \begin{split}
      & \geq \frac{1}{2} C_{n} \int_{\R^{*}} \sum_{ k \geq k_{0} \, : \,  \mu \lambda \ll 1  } \,   \int_{z \in \Ca : |z | \leq \rho_{k}} |f^{\lambda} \ast_{\lambda} \varphi_{k, \lambda}^{n-1} \, (z)|^2 dz \, \lambda^{2n} d\lambda   \\
       & \geq \frac18 C_{n}{C_n^{\prime}}^2 |E|^2 \,   \int_{0 <\lambda \,  \leq  b_n} \sum_{k_{0} \leq  k \leq a_{n} \lambda^{-1}   } \,   \rho_{k}^{2n} \mu^{2n-2} \ \lambda^{2n} d\lambda.
    \end{split}
\end{equation*}
Now $\rho_{k}$ can be chosen to be $\mu^{-M}$, for some $M>0$, and the last expression then becomes
\begin{equation*}
    \begin{split}       
       &  \frac18 C_{n}{C_n^{\prime}}^2 |E|^2 \,   \int_{0 <\lambda \,  \leq  b_n} \sum_{k_{0} \leq  k \leq a_{n} \lambda^{-1}   } \,  \mu^{-2n M + 2n-2} \ \lambda^{2n} d\lambda,
\end{split}
\end{equation*}
which is $\simeq_{n, k_{0}} |E|^2$. This gives the asserted lower bound for \eqref{low frequency contri 1}, and hence \eqref{low frequency contri} is proved. 

It remains to establish \eqref{twisted term near 0}. Here we use asymptotic of Laguerre polynomials in the Bessel regime and write  
    \begin{equation*}\label{3 twisted term near 0}
    \begin{split}
        f^{\lambda} \ast_{\lambda} \varphi_{k, \lambda}^{n-1} \, (z) &  = \int_{\mathbb H^n} 1_{E}(z-w,-t) \, L_k^{n-1}\big( {\textstyle \frac{1}{2}} \lambda|w|^2 \big) \, e^{- \frac{1}{4}\lambda |w|^2}   e^{i \lambda \left(-t+\frac12 \Im(z\cdot \overline{w}) \right)}dwdt \\
        & = \int_{(z,0) - E \,  \subseteq \mathbb H^n}  L_k^{n-1}\big( {\textstyle \frac{1}{2}} \lambda|w|^2 \big) \, e^{- \frac{1}{4}\lambda |w|^2}   e^{i \lambda \left(-t+\frac12 \Im(z\cdot \overline{w}) \right)}dwdt
    \end{split}
\end{equation*}
where,  we have used that  $1_{E}(z-w, -t)=1$ if and only if $(w,t) \in (z,0) - E$. Since $E \subseteq B(0,1) \subset \Ha$, the Kor\'anyi norm $|(w-z,t)|_{K} <1$, whence  $\frac{\lambda}{4} |w |^2 \leq \lambda$ and $| \lambda \left(-t+\frac12 \Im(z\cdot \overline{w}) \right) | \leq 2\lambda$. Since $\lambda \ll  1$, so $ \left| \exp{ i \lambda \left(-t+\frac12 \Im(z\cdot \overline{w}) \right) } - 1 \right| \leq 4 \lambda$ which together with the estimate $\frac{\Gamma(k+1)}{\Gamma(n)\Gamma(k+n)} |L_{k}^{n-1}(x) e^{-x/2}| \leq 1$,  and the Stirling approximation, gives that the last integral equals
\begin{equation}\label{4 twisted term near 0}
    \begin{split}
         \int_{(z,0) - E \,  \subseteq \mathbb H^n}  L_k^{n-1}\big( {\textstyle \frac{1}{2}} \lambda|w|^2 \big) \, e^{- \frac{1}{4}\lambda |w|^2}  dwdt \ + \ O \left( \lambda \mu^{n-1} |E| \right).
    \end{split}
\end{equation}
Since, ${\textstyle\frac{1}{2}} \lambda |w|^2 \leq 2 \lambda \ll 1/ \mu$  in the integral in (\ref{4 twisted term near 0}), using Laguerre polynomial asymptotic (\ref{less than q by mu}) in conjugation with the Stirling approximation the last expression (\ref{4 twisted term near 0})  becomes
\begin{equation*}\label{5 twisted term near 0}
    \begin{split}
        & C  \int_{(z,0) - E \,  \subseteq \mathbb H^n} \frac{\Gamma(k+n)}{\Gamma(k+1)} \left[ 1 + O  \left( \left( {\textstyle\frac{1}{2}} \lambda |w|^2 \, \mu \right)^{1/2} \right) \right]  dwdt \ + \ O \left( \lambda \mu^{n-1} |E| \right)\\
        & \simeq \mu^{n-1} |E| +  O (  \mu^{n-1} (\lambda \mu)^{1/2} |E|),
    \end{split}
\end{equation*}
which is what we were aiming for in (\ref{twisted term near 0}). Thus, we have completed the proof of \Cref{the LFE}.
\end{proof}

\subsection{Middle frequency part }

\begin{lemma}\label{Middle freq lemma} Let $0< \epsilon< \frac{1}{2}$, let $E \subset  B(0,1) \subset \H^n$ have measure $|E| \geq \epsilon$, and let $J=J(\epsilon)$ be a sufficiently large natural number depending on $\epsilon$. Suppose that $0 < r_{J} < \cdots < r_{1} \leq 1 $ are a sequence of scales with $r_{j+1} \leq r_{j}/2$ for all $1 \leq j < J$. Let $r_{j}^2 < \delta \ll 1$. Then for at least one $1 \leq j \leq J$, one has
\begin{equation}\label{middle freq estimate}
    \int_{\R^{*}} \sum_{k \, : \, \delta / r_{j}^{2}  \leq (2k+1) |\lambda |  \leq 1/ (\delta r_{j}^2) } R_{k}(\lambda r^{2},\sigma) \sum_{|\alpha|=k} \langle \hat{f}(\lambda)^{*} \hat{f}(\lambda) \Phi_{\alpha}^{\lambda},\Phi_{\alpha}^{\lambda} \rangle \ |\lambda|^n d\lambda \lesssim \epsilon |E|.
\end{equation}
\end{lemma}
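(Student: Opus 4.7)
The plan is to carry out Bourgain's energy-pigeonholing argument in the Heisenberg setting. Two ingredients are required: a uniform bound on the coefficients $R_k(\Lambda,\sigma)$, and a bounded-overlap statement for the medium-frequency windows across the scales $r_j$. Given these, averaging over $j=1,\ldots,J$ and invoking the pigeonhole principle will isolate at least one $j$ for which \eqref{middle freq estimate} holds.

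For the uniform bound, observe that $\sigma$ is a probability measure on $S_K(0,1)$ and each $\pi_\Lambda$ is unitary, so $\|\hat\sigma(\Lambda)\|_{\mathrm{op}}\leq \int_{\Ha}\|\pi_\Lambda(z,t)\|_{\mathrm{op}}\,d\sigma(z,t)=1$. Since $\hat\sigma(\Lambda)\Phi_\alpha^\Lambda = R_k(\Lambda,\sigma)\Phi_\alpha^\Lambda$ for any multi-index $\alpha$ with $|\alpha|=k$, one obtains
\begin{equation*}
|R_k(\Lambda,\sigma)| \;=\; |\langle\hat\sigma(\Lambda)\Phi_\alpha^\Lambda,\Phi_\alpha^\Lambda\rangle| \;\leq\; 1, \qquad k\in\N_{0},\ \Lambda\in\R^{*}.
\end{equation*}
Because $\sum_{|\alpha|=k}\langle\hat f(\lambda)^{*}\hat f(\lambda)\Phi_\alpha^\lambda,\Phi_\alpha^\lambda\rangle = \sum_{|\alpha|=k}\|\hat f(\lambda)\Phi_\alpha^\lambda\|_2^2$ is nonnegative, the triangle inequality dominates the modulus of the medium-frequency integral at scale $r_j$ by
\begin{equation*}
I_j \;:=\; \int_{\R^{*}}\sum_{k\,:\,\delta\,\leq\,\mu|\lambda|r_j^2\,\leq\,1/\delta}\;\sum_{|\alpha|=k}\|\hat f(\lambda)\Phi_\alpha^\lambda\|_2^2\;|\lambda|^n\,d\lambda.
\end{equation*}

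For the overlap bound, summing $I_j$ over $1\leq j\leq J$ and interchanging orders gives
\begin{equation*}
\sum_{j=1}^{J}I_j \;=\; \int_{\R^{*}}\sum_{k\geq 0} N(k,\lambda)\sum_{|\alpha|=k}\|\hat f(\lambda)\Phi_\alpha^\lambda\|_2^2\,|\lambda|^n\,d\lambda,
\end{equation*}
where $N(k,\lambda)=\#\{1\leq j\leq J : r_j^2\in[\delta/(\mu|\lambda|),\,1/(\delta\mu|\lambda|)]\}$. The endpoints of this interval have ratio $\delta^{-2}$, and the hypothesis $r_{j+1}^2\leq r_j^2/4$ then forces $N(k,\lambda)\lesssim \log(1/\delta)$ uniformly in $(k,\lambda)$. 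Invoking the Plancherel formula \eqref{Plancherel} yields $\sum_{j=1}^{J}I_j\lesssim_{n} \log(1/\delta)\,\|f\|_2^2 = \log(1/\delta)\,|E|$. Since $\delta$ was fixed in \Cref{Low part} as an absolute small constant independent of $\epsilon$, $\log(1/\delta)$ is likewise an absolute constant; choosing $J=J(\epsilon)\geq C_{n}\log(1/\delta)/\epsilon$ and pigeonholing produces at least one index $j$ with $I_j\leq \epsilon|E|$, which is \eqref{middle freq estimate}. No genuine obstacle appears: all the Heisenberg-specific technical work has been carried out in the low- and high-frequency estimates, and what remains here is a direct adaptation of Bourgain's Euclidean pigeonholing argument.
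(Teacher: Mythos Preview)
Your proof is correct and follows essentially the same route as the paper: bound $|R_k|\le 1$, sum the resulting nonnegative quantities over $j$, use lacunarity of the $r_j$ to control the overlap of the medium-frequency windows by $O(\log(1/\delta))$, apply Plancherel, and pigeonhole. Your derivation of $|R_k(\Lambda,\sigma)|\le 1$ via the operator norm of $\hat\sigma(\Lambda)$ is slightly cleaner than the paper's pointwise Laguerre bound, and one small caveat: your remark that $\delta$ is an absolute constant independent of $\epsilon$ is not quite how the paper ultimately uses it (in the proof of \Cref{FKW-quantitative} $\delta$ is chosen so that $\delta^{\alpha(n)}\le 3\epsilon$), but this does not affect the validity of the lemma as stated, since your choice $J\gtrsim \log(1/\delta)/\epsilon$ already records the correct dependence.
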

\begin{proof}
We simply observe that since $\textstyle \frac{\Gamma(k+1)}{\Gamma(n) \Gamma(k+n)} \, |\varphi^{n-1}_{k, \lambda}(x)| \leq 1$ so $|R_{k}(\lambda, \sigma)| \lesssim 1$, for all $\lambda \in \mathbb{R}$. Thus, of course, we can estimate left hand side of (\ref{middle freq estimate}) by
\begin{equation*}
    \int_{\R^{*}} \sum_{k \, : \, \delta / r_{j}^{2}  \leq (2k+1) |\lambda |  \leq 1/ (\delta r_{j}^2) } \, \sum_{|\alpha|=k} \langle \hat{f}(\lambda)^{*} \hat{f}(\lambda) \Phi_{\alpha}^{\lambda},\Phi_{\alpha}^{\lambda} \rangle \ |\lambda|^n d\lambda, 
\end{equation*}
and by an application of Plancherel's theorem (\ref{Plancherel}) this in turn is crudely estimated by
\begin{equation}
     \int_{\R^{*}} \sum_{k \geq 0 } \, \sum_{|\alpha|=k} \langle \hat{f}(\lambda)^{*} \hat{f}(\lambda) \Phi_{\alpha}^{\lambda},\Phi_{\alpha}^{\lambda} \rangle \ |\lambda|^n d\lambda = C_n  \int_{\R^{*}} \| \hat{f}(\lambda)\|^{2}_{\text{HS}} \, |\lambda|^n d \lambda = C_n \,  |E|.
\end{equation}
However, because of the lacunarity hypothesis, $r_{j+1} \leq r_{j}/2$, the annulii $A^{\delta}_{j}:= \lbrace (\lambda, \mu ) \in \mathbb{R} \times (2 \mathbb{N} +n) : \delta / r_{j}^2 \leq \lambda \mu \leq 1 /  \delta r_{j}^2 \rbrace$ only overlap with multiplicity $O \left( \log \frac{1}{\delta} \right)$, hence the Plancherel bound, in fact, yields
\begin{equation*}
   \sum_{j=1}^{J}  \int_{\R^{*}} \sum_{k \, : \, \delta / r_{j}^{2}  \leq (2k+1) |\lambda |  \leq 1/ (\delta r_{j}^2) } \, \sum_{|\alpha|=k} \langle \hat{f}(\lambda)^{*} \hat{f}(\lambda) \Phi_{\alpha}^{\lambda},\Phi_{\alpha}^{\lambda} \rangle \ |\lambda|^n d\lambda \lesssim    \log \frac{1}{\delta} \ |E|,
\end{equation*}
and hence by the pigeonhole principle we can choose a scale $r_{j}$, for some $1 \leq j \leq J$, for which
\begin{equation*}
    \int_{\R^{*}} \sum_{k \, : \, \delta / r_{j}^{2}  \leq (2k+1) |\lambda |  \leq 1/ (\delta r_{j}^2) } \, \sum_{|\alpha|=k} \langle \hat{f}(\lambda)^{*} \hat{f}(\lambda) \Phi_{\alpha}^{\lambda},\Phi_{\alpha}^{\lambda} \rangle \ |\lambda|^n d\lambda \lesssim   \frac{ \log \frac{1}{\delta} } {J}  \, |E|.
\end{equation*}
We will now compare it with the contribution of the low frequencies, (\ref{low frequency contri}). Choose $J=J(\delta, \epsilon)$ large enough so that $\textstyle \frac{\log 1/ \delta}{J} \, |E| \lesssim \epsilon |E|$, or equivalently $\frac{1}{ {\displaystyle  \epsilon}} \log 1 / \delta \lesssim J$, that is for $J = J (\delta, \epsilon)$ large enough, $J = O \big( \frac{1}{ {\displaystyle  \epsilon}} \log 1 / \delta \big) $, one can capitalize this ``good" scale, $r_{j}$, to make the contribution due to the medium frequencies small compared to that the low frequencies (which was $|E|^2$), and one can conclude the desired estimate, (\ref{middle freq estimate}).
\end{proof}
\begin{remark}
    We remark that the bound on the number $J$ of scales is  $ J \gtrsim \frac{1}{ {\displaystyle  \epsilon}} \log \frac{1}{ {  \delta}} \gtrsim \frac{1}{ {\displaystyle  \epsilon}} \log \frac{1}{ {\displaystyle  \epsilon}}$. So, we can take $j = O \big( \frac{1}{ {\displaystyle  \epsilon}} \log \frac{1}{ {\displaystyle  \epsilon}} \big)$, which is the same dependence on $\epsilon$ as in the Euclidean case.
\end{remark}

\subsection{High frequency part }
After locating the scale $r_{j}$ which takes care of the contribution (\ref{Medium}) of the medium frequencies as in \Cref{Middle freq lemma}, we proceed to address high frequencies. We use the precise ``decay" of the coefficients $R_{k}(\lambda, \sigma)$, for $ \mu |\lambda|$ large from \Cref{coeff. surface measure} to show that the contribution (\ref{High}) of the high frequencies is negligible.

Using Plancherel's theorem and the \Cref{coeff. surface measure}, one can make contribution (\ref{High}) of the high frequencies small as well:
\begin{align}\label{High Freqncy estimate}
      & \left|  \int_{\R^{*}} \sum_{k \, : \,  2(2k+n) \lambda |  >  1/ (\delta r_{j}^2) } R_{k}(\lambda r_{j}^{2},\sigma) \sum_{|\alpha|=k} \langle \hat{f}(\lambda)^{*} \hat{f}(\lambda) \Phi_{\alpha}^{\lambda},\Phi_{\alpha}^{\lambda} \rangle \ |\lambda|^n d\lambda \right| \notag \\
      & \lesssim  \int_{\R^{*}} \sum_{k \, : \,  2(2k+n) |\lambda |  >  1/ (\delta r_{j}^2) } \left( 2(2k+n)|\lambda| r_{j}^2 \right)^{- \alpha(n)}    \sum_{|\alpha|=k} \langle \hat{f}(\lambda)^{*} \hat{f}(\lambda) \Phi_{\alpha}^{\lambda},\Phi_{\alpha}^{\lambda} \rangle \ |\lambda|^n d\lambda \notag \\
       & \lesssim \delta^{\alpha(n)} \int_{\R^{*}} \sum_{k \, : \,  2(2k+n) |\lambda |  >  1/ (\delta r_{j}^2) }   \,  \sum_{|\alpha|=k} \langle \hat{f}(\lambda)^{*} \hat{f}(\lambda) \Phi_{\alpha}^{\lambda},\Phi_{\alpha}^{\lambda} \rangle  \ |\lambda|^n d\lambda \notag \\
       & \lesssim \delta^{\alpha(n)} \int_{\R^{*}} \sum_{k =0 }^{\infty}   \left|  \sum_{|\alpha|=k} \langle \hat{f}(\lambda)^{*} \hat{f}(\lambda) \Phi_{\alpha}^{\lambda},\Phi_{\alpha}^{\lambda} \rangle \right| \ |\lambda|^n d\lambda \notag \\
       & \simeq \delta^{\alpha(n)} (2 \pi)^{-n-1} \int_{\R^{*}} \| \hat{f}(\lambda) \|^{2}_{\text{HS}}  \ |\lambda|^n d\lambda \notag \\
       & = \delta^{\alpha(n)} \| f \|^{2}_{L^{2}(\H^n)} \notag \\
       & = \delta^{\alpha(n)} |E|, \ \ \ ( \text{recall} \, f = 1_{E}). 
   \end{align}
Altogether, we can now quickly prove the quantitative version of our main theorem, that is \Cref{FKW-quantitative}.  
\subsection{Proof of \Cref{FKW-quantitative}} We continue the discussion before Subsection \ref{Low part}. For $\delta \ll 1$ in view of \Cref{Middle freq lemma} we get a single scale $r_{j}$ such that $r_{j}^2 < \delta$ satisfying (\ref{middle freq estimate}), making the contribution (\ref{Medium}) of medium frequency small compared to that of  the lower bound  (\ref{low frequency contri}) in the low frequencies case. And, for $\delta$ small enough (\ref{High Freqncy estimate}) implies the contribution (\ref{High}) of the high frequencies is negligible as compared to that of the low frequency term, (\ref{low frequency contri}):
\begin{equation*}
    \begin{split}
        & \int_{\R^{*}} \sum_{k=0}^{\infty} R_{k}(\lambda r^{2},\sigma) \sum_{|\alpha|=k} \langle \hat{f}(\lambda)^{*} \hat{f}(\lambda) \Phi_{\alpha}^{\lambda},\Phi_{\alpha}^{\lambda} \rangle \ d\mu(\lambda) \\
        & \geq C |E|^2 - {\textstyle\frac{C}{3} } \, \epsilon |E| - C \delta^{\alpha(n)} |E|\\
        & \geq \frac{C}{3}|E| \epsilon \gtrsim \epsilon^2,
    \end{split}
\end{equation*}
by choosing $\delta \ll 1$ so that $\delta^{\alpha(n)} \leq 3 \epsilon$ since $\alpha(n)= n/2 - 1/4>0$ for all $n \geq 1$.  Thus, (\ref{euivalent main ineq}) follows. This completes the proof of the theorem.

 \section{Proof of Lemma \ref{coeff. surface measure}: \\ ``Decay" of  coefficients in the spectral decomposition of $\widehat{\sigma}$  }\label{mainnovelproof}
Below, we give the ``decay" of $R_{k}(\lambda, \sigma)$ in the high frequency case, namely,  $\mu |\lambda| \gg 1$. First we will prove the lemma for large $k$, whereas small $k$ will be treated towards the end of the proof. We continue to allow the implied constants in the asymptotic notation to depend on $n$, unless specified otherwise. Fix $k_{0}$ large enough, so that we can use the asymptotic for Laguerre polynomials in \Cref{Laguerre-Asymp-Our-setup} for $k \geq k_{0}$.  Recall that 
\begin{equation}\label{Coeff. Rk as Laguerre integral}
   \begin{split}
        R_{k}(\lambda, \sigma) 
        & = c_{n} \frac{\Gamma(k+1)}{\Gamma(k+n)} \int_{- \pi/2}^{\pi/2} L_{k}^{n-1} \left( {\textstyle\frac{1}{2} } |\lambda|  \cos{\theta}  \right) \, e^{-\frac{1}{4} |\lambda| \cos{\theta}}  e^{i \frac{1}{4} \lambda \sin{\theta}} (\cos{\theta})^{n-1} d \theta.
   \end{split}
\end{equation}   
By symmetry, we can assume $\lambda>0$. Also, recall the notation $\mu = 2(2k +n)$ introduced in the notation subsection of the paper; this will be convenient to use in presentation of the argument that follows. To show Lemma \ref{coeff. surface measure} we consider the cases $ \frac{1}{\mu} \leq \lambda \lesssim 1$ and $\lambda \gg 1$, separately.

\noindent \textbf{When $ \frac{1}{\mu} \leq \lambda \lesssim 1$. } Since $\lambda \lesssim 1$, so the the factor $ e^{i \frac{1}{4} \lambda \sin{\theta}}$, being $\sim 1$, produces no  significant extra oscillations. By abuse of notation we continue denoting  integral in (\ref{Coeff. Rk as Laguerre integral})  without $ e^{i \frac{1}{4} \lambda \sin{\theta}}$ by $R_{k}(\lambda, \sigma)$ only:
\begin{equation}\label{lambda less than 1 Coeff. Rk }
   \begin{split}
        R_{k}(\lambda, \sigma) 
        & = c_{n} \frac{\Gamma(k+1)}{\Gamma(k+n)} \int_{- \pi/2}^{\pi/2} L_{k}^{n-1} \left( {\textstyle\frac{1}{2} } \lambda  \cos{\theta}  \right) \, e^{-\frac{1}{4} \lambda \cos{\theta}}  (\cos{\theta})^{n-1} d \theta.
   \end{split}
\end{equation}
After change of variable $x = \frac{1}{2} \lambda \cos{\theta}$, we see that
\begin{equation}\label{x var. lambda less than 1 Coeff. Rk}
   \begin{split}
        R_{k}(\lambda, \sigma)
        & = C_{n} \lambda^{-n +1} \frac{\Gamma(k+1)}{\Gamma(k+n)}  \int_{0}^{ \lambda /2}  L_{k}^{n-1}(x) \, e^{-\frac{1}{2} x }  x^{n-1} \frac{dx}{\sqrt{(\lambda/2)^2 -  x^2}},
   \end{split}
\end{equation}
for some $C_n>0$. Since, $\lambda \mu \gg 1$, we have only one possible situation which is $\frac{1}{\mu} \leq \lambda /2 < \lambda \lesssim 1$. Inspired by the asymptotic of the Laguerre polynomial we decompose the region of integration into two pieces:
\begin{equation}\label{split lambda less than 1 Coeff. Rk }
   \begin{split}
         R_{k}(\lambda, \sigma)
        & = C_{n} \lambda^{-n +1} \frac{\Gamma(k+1)}{\Gamma(k+n)} \left\lbrace  \int_{0}^{ 1 / \mu} + \int_{1/ \mu}^{ \lambda /2}  \right\rbrace L_{k}^{n-1}(x) \, e^{-\frac{1}{2} x }  x^{n-1} \frac{dx}{\sqrt{(\lambda/2)^2 -  x^2}}\\
        & =: J_{1} + J_{2},
   \end{split}
\end{equation}
For the first piece we use the asymptotic (\ref{less than q by mu}) to obtain
\begin{equation*}
   \begin{split}
         |J_{1}|
        & \simeq \lambda^{-n +1}   \int_{0}^{ 1 / \mu}      x^{n-1} \frac{dx}{\sqrt{(\lambda/2)^2 -  x^2}}\\
        & \simeq  \lambda^{-n +1}   \int_{0}^{ 1 / \mu}      x^{n-1} \frac{dx}{\lambda}\\
        & \simeq \lambda^{-n}   \mu^{-n},
   \end{split}
\end{equation*}
where the second equivalence follows from the fact that $\lambda/2 \leq \lambda/2 + x \leq \lambda$, and $\lambda = \frac{1}{\mu} \lambda \mu \simeq \frac{1}{\mu} (\lambda \mu /2 -1) = \lambda/2 - 1 / \mu \leq \lambda /2 - x \leq \lambda /2$ which in turn uses our hypothesis $\lambda \mu \gg 1$. Whereas, using the asymptotic (\ref{1 by mu to mu}) we can bound the second piece as
\begin{equation} \label{J2 estimate}
   \begin{split}
         |J_{2}|
        & \lesssim \lambda^{-n +1}   \int_{ 1 / \mu}^{\lambda/2}      x^{n-1} (x \mu )^{-\frac{2n-1}{4}} \frac{dx}{\sqrt{(\lambda/2)^2 -  x^2}}\\
         & = \lambda^{-n +1} \mu^{- \frac{n}{2} + \frac{1}{4}}   \int_{ 1 / \mu}^{\lambda/2}      x^{\frac{n}{2} - \frac{3}{4}} \frac{dx}{\sqrt{(\lambda/2)^2 -  x^2}}\\
         & \lesssim \lambda^{-n +1} \mu^{- \frac{n}{2} + \frac{1}{4}} \lambda^{-1/2} \lambda^{ \frac{n}{2} - \frac{1}{2}}  \int_{ 1 / \mu}^{\lambda/2}      x^{- \frac{1}{4}} \frac{dx}{\sqrt{\lambda/2 -  x}}\\
         & \overset{ x \rightarrow \frac{\lambda}{2} x }{ \simeq } \lambda^{-n +1} \mu^{- \frac{n}{2} + \frac{1}{4}} \lambda^{-1/2} \lambda^{ \frac{n}{2} - \frac{1}{2}}  \int_{ 2 / \lambda \mu}^{1}   \lambda^{-1/4}   x^{- \frac{1}{4}} \frac{ \lambda dx}{\lambda^{1/2} \sqrt{\lambda/2 -  x}}\\
         & \simeq \lambda^{- \frac{n}{2} + \frac{1}{4}} \mu^{- \frac{n}{2} + \frac{1}{4}}.
   \end{split}
\end{equation}


Thus, we obtain that
\begin{equation*} \label{J2 estimate cont..}
   \begin{split}
         |J_{2}|
         & \lesssim \lambda^{-n +1} \mu^{- \frac{n}{2} + \frac{1}{4}} \lambda^{-1/2} \lambda^{ \frac{n}{2} - \frac{1}{2}} \lambda^{1/4}\\
         & = \lambda^{- \frac{n}{2} + \frac{1}{4}} \mu^{- \frac{n}{2} + \frac{1}{4}}.
   \end{split}
\end{equation*}

Combining the bounds for $J_{1}, J_{2}$ we obtain that when $\frac{1}{\mu} \leq \lambda \lesssim 1$ then
\begin{equation*}
    |R_{k}(\lambda, \sigma)| \simeq (\lambda \mu )^{-n} + (\lambda \mu )^{-\left( \frac{n}{2} - \frac{1}{4}  \right)} \simeq (\lambda \mu )^{-\left( \frac{n}{2} - \frac{1}{4}  \right)}.
\end{equation*}

\noindent \textbf{When $\lambda \gg 1$.}  In view of symmetry, we will abuse the notation and write $R_{k}(\lambda, \sigma)$ for integral in (\ref{Coeff. Rk as Laguerre integral}) but taken over the interval $[0, \pi/2]$ only. Because we have that $\lambda \gg 1$, this will give more possibility of cancellation due to the factor $ e^{i \frac{1}{4} \lambda \sin{\theta}}$ in $R_{k}(\lambda, \sigma)$. To exploit cancellation due to this factor, we use Van der Corput lemma (see \Cref{Van der Corput}) and decomposition away from the stationary points. Since the only stationary point for the phase function $\theta \mapsto \frac{1}{4} \lambda \sin{\theta}$ in $[0, \pi/2]$ is $ \pi/2$, we decompose the integral in (\ref{Coeff. Rk as Laguerre integral}) near and away from $ \pi/2$:
\begin{equation}\label{I plus II Coeff. Rk as Laguerre integral}
   \begin{split}
         R_{k}  (\lambda, \sigma) 
        & = c_{n} \frac{\Gamma(k+1)}{\Gamma(k+n)}  \left\lbrace  \int_{ |\theta - \pi/2| \gtrsim \lambda^{-1/2}} +  \int_{ |\theta - \pi/2| \lesssim \lambda^{-1/2}} \right\rbrace L_{k}^{n-1} \left( {\textstyle\frac{1}{2} } \lambda  \cos{\theta}  \right) \, e^{ - \frac{1}{4} \lambda \cos{\theta}}  e^{i \frac{1}{4} \lambda \sin{\theta}} (\cos{\theta})^{n-1} d \theta \\
        & =:  R_{k}^{I}(\lambda, \sigma) + R_{k}^{II}(\lambda, \sigma)
   \end{split}
\end{equation}
The first integral, where $  |\theta - \pi/2| \gtrsim \lambda^{-1/2} $, has huge oscillations produced by $ e^{i \frac{1}{4} \lambda \sin{\theta}}$. Hence we expect it to be relatively smaller than the second integral. The second integral  forms the base of the oscillatory integral $R_{k}(\lambda, \sigma)$.  We estimate the second term, $R_{k}^{II}(\lambda, \sigma)$, first.   Observe that, while dealing with the term $R_{k}^{II}(\lambda, \sigma)$, the factor $ e^{i \frac{1}{4} \lambda \sin{\theta}}$, being $\sim e^{i \frac{\lambda}{4}}$, produces no significant extra oscillations in the stationary region, $  |\theta - \pi/2| \gtrsim \lambda^{-1/2} $,  other than the fixed one $e^{i \frac{\lambda}{4}}$. By abuse of notation we continue writing for the  second term with $ e^{i \frac{1}{4} \lambda \sin{\theta}}$ replaced by $ e^{i \frac{1}{4} \lambda}$  as $R_{k}^{II}(\lambda, \sigma)$: 

\begin{equation}\label{II Coeff. Rk as Laguerre integral}
   \begin{split}
        R_{k}^{II}(\lambda, \sigma)
        & = c_{n} \frac{\Gamma(k+1)}{\Gamma(k+n)} \,  e^{i \frac{1}{4} \lambda}  \int_{ |\theta - \pi/2| \lesssim \lambda^{-1/2}}  L_{k}^{n-1} \left( {\textstyle\frac{1}{2} } \lambda  \cos{\theta}  \right) \, e^{-\frac{1}{4} \lambda \cos{\theta}}   (\cos{\theta})^{n-1} d \theta.
   \end{split}
\end{equation}
Indeed, in view of the estimate $ \frac{\Gamma(k+1)}{\Gamma(n)\Gamma(k+n)} |L_{k}^{n-1}(x)| e^{-x/2}   \leq 1$, the difference of the original $R_{k}^{II}(\lambda, \sigma)$ and (\ref{II Coeff. Rk as Laguerre integral}) can easily be seen to be $O(\lambda^{-n/2})$, which is acceptable.
\subsection*{Treatment of the base of oscillatory integral: $R_{k}^{II}(\lambda, \sigma)$, $\lambda \gg 1$}

Let us fix a change of variable in above integral:
\begin{equation}\label{x coords}
    x= \frac{1}{2} \, \lambda \cos{\theta}.
\end{equation}
Then we have 
\begin{equation*}
    0 \leq x \lesssim \lambda^{1/2}, \ \lambda \gg 1,
\end{equation*}
whence we get
\begin{equation}\label{x var. II Coeff. Rk as Laguerre integral}
   \begin{split}
        R_{k}^{II}(\lambda, \sigma)
        & = C_{n} \lambda^{-n +1} \frac{\Gamma(k+1)}{\Gamma(k+n)} \,  e^{i \frac{1}{4} \lambda}   \int_{0}^{c \lambda^{1/2}}  L_{k}^{n-1}(x) \, e^{-\frac{1}{2} x }  x^{n-1} \frac{dx}{\sqrt{\lambda^2 - 4 x^2}}\\
   \end{split}
\end{equation}
for some $0<c \ll 1$, $C_n>0$.

To avoid repetition of similar calculations, set $S_{k}(y)$ to be
\begin{equation}\label{factor of Rk}
    \begin{split}
        S_{k}(y)
        & : =  \frac{\Gamma(k+1)}{\Gamma(k+n)}  \int_{0}^{y}  L_{k}^{n-1}(x) \, e^{-\frac{1}{2} x }  x^{n-1} \frac{dx}{\sqrt{\lambda^2 - 4 x^2}},
   \end{split}
\end{equation}
then $R_{k}^{II}(\lambda, \sigma) = C_{n} \lambda^{-n + 1} \,  e^{i \frac{1}{4} \lambda} \, S_{k}(c \lambda^{1/2})$.

Further analysis of $R_{k}^{II}(\lambda, \sigma)$ is dictated by the behavior of the Laguerre polynomial. We distinguish the following three cases:
\begin{equation*}
    \begin{split}
        & \text{A} : 1 \leq c \lambda^{1/2} \leq b \mu  \ \ \ (\text{``Bessel regime"}),\\
        & \text{B} : a \mu  \leq c \lambda^{1/2} \leq \frac{1}{a} \mu, \ \ 0<a <b<1.  \ \ \ (\text{ up to the ``Airy regime"}),\\
        & \text{C} :  c \lambda^{1/2} \gg \mu  \ \ \  (\text{ up to  ``exponential regime"}).\\
    \end{split}
\end{equation*}
\textbf{Case A.} In view of the asymptotic of Laguerre polynomial in this region, we break
\begin{equation*}\label{x var. II Coeff. Rk as Laguerre integral}
   \begin{split}
        S_{k}(c \lambda^{1/2})
        & =  \frac{\Gamma(k+1)}{\Gamma(k+n)}  \int_{0}^{c \lambda^{1/2}}  L_{k}^{n-1}(x) \, e^{-\frac{1}{2} x }  x^{n-1} \frac{dx}{\sqrt{\lambda^2 - 4 x^2}}\\
        & = \int_{0}^{c 1 / \mu} + \int_{1 / \mu}^{c \lambda^{1/2}} =: \ROMAN{I} + \ROMAN{II}.
   \end{split}
\end{equation*}
For $\ROMAN{I}$,
\begin{equation*}
   \begin{split}
        \ROMAN{I}
        & =  \frac{\Gamma(k+1)}{\Gamma(k+n)}  \int_{0}^{ 1 / \mu}  L_{k}^{n-1}(x) \, e^{-\frac{1}{2} x }  x^{n-1} \frac{dx}{\sqrt{\lambda^2 - 4 x^2}}\\
        & \sim  \int_{0}^{ 1 / \mu}   x^{n-1} \left[ 1 + O \left( (x \mu)^{1/2} \right)  \right] \frac{dx}{\sqrt{\lambda^2 - 4 x^2}} \sim  \int_{0}^{ 1 / \mu}   x^{n-1} \frac{dx}{\lambda}\\
        & \sim  \lambda^{-1} \frac{1}{\mu^{n}},
   \end{split}
\end{equation*}
whereas for $\ROMAN{ II }$, one has
\begin{equation*}
   \begin{split}
        \ROMAN{ II }
        & =  \frac{\Gamma(k+1)}{\Gamma(k+n)}  \int_{1 / \mu}^{c \lambda^{1/2}}  L_{k}^{n-1}(x) \, e^{-\frac{1}{2} x }  x^{n-1} \frac{dx}{\sqrt{\lambda^2 - 4 x^2}}\\
        & \sim  \int_{ 1 / \mu}^{c \lambda^{1/2}}   x^{n-1} O \left(  (x \mu )^{-\frac{2n-1}{4}} \right) \frac{dx}{\sqrt{\lambda^2 - 4 x^2}}.
   \end{split}
\end{equation*}
Thus, 
\begin{equation*}
   \begin{split}
        |\ROMAN{II}| 
        & \lesssim   \int_{ 1 / \mu}^{c \lambda^{1/2}}   x^{n-1} \, (x \mu )^{-\frac{2n-1}{4}} \frac{dx}{\lambda} \simeq \frac{1}{\lambda}   \mu^{-n/2+ 1/4} (\lambda^{1/2})^{n/2 + 1/4}.
   \end{split}
\end{equation*}
Altogether,
\begin{equation}\label{upto mu base}
   \begin{split}
        |S_{k}(c \lambda^{1/2})|
        & \lesssim    \frac{1}{\lambda} \frac{1}{\mu^{n/2 - 1/4}}  \left[  \frac{1}{\mu^{n/2 + 1/4}} +   (\lambda^{1/2})^{n/2 + 1/4} \right]\\
        & \simeq  \frac{1}{\lambda} \frac{1}{\mu^{n/2 - 1/4}}   (\lambda^{1/2})^{n/2 + 1/4}.
   \end{split}
\end{equation}
Therefore,
\begin{equation}\label{Case A estimate}
   \begin{split}
        | R_{k}^{II}(\lambda, \sigma)|
        & \simeq \lambda^{-n+1} \frac{1}{\lambda} \frac{1}{\mu^{n/2 - 1/4}}   (\lambda^{1/2})^{n/2 + 1/4}\\
        & = \frac{1}{(\lambda \mu)^{n/2 - 1/4}} \, \frac{1}{(\lambda )^{n/4 + 1/8}}. 
   \end{split}
\end{equation}

\textbf{Case B.} If $a \mu \leq y \leq \mu - c \mu^{1/3}$ then
\begin{equation*}
    S_{k}(y) = S_{k}(a \mu) +    \frac{\Gamma(k+1)}{\Gamma(k+n)}  \int_{a \mu}^{y}  L_{k}^{n-1}(x) \, e^{-\frac{1}{2} x }  x^{n-1} \frac{dx}{\sqrt{\lambda^2 - 4 x^2}}.
\end{equation*}
From (\ref{upto mu base}), $S_{k}(a \mu) \simeq  \lambda^{-1} \mu^{1/2} $. For the remaining piece we use asymptotic (\ref{Bessel-Airy interface}) to obtain
\begin{equation}\label{B case before nbd of mu}
    \begin{split}
        & \left|   \frac{\Gamma(k+1)}{\Gamma(k+n)}  \int_{a \mu}^{y}  L_{k}^{n-1}(x) \, e^{-\frac{1}{2} x }  x^{n-1} \frac{dx}{\sqrt{\lambda^2 - 4 x^2}}    \right| \\
        & \simeq    \int_{a \mu}^{y}  \mu^{- \frac{4}{3}} \mu^{\frac{1}{12}} (\mu - x)^{- 1/4} \frac{dx}{\sqrt{\lambda^2 - 4 x^2}}\\
        & \simeq  \lambda^{-1} \mu^{- \frac{4}{3}} \mu^{\frac{1}{12}}  \int_{a \mu}^{y}    \frac{dx}{(\mu - x)^{1/4}}\\
         & \overset{ x \rightarrow \mu x}{=} \lambda^{-1} \mu^{- \frac{4}{3}} \mu^{\frac{1}{12}} \mu^{3/4}  \int_{a }^{y \mu^{-1}}    \frac{dx}{(1 - x)^{1/4}}\\
         & \simeq \lambda^{-1} \mu^{- \frac{1}{2}}.
    \end{split}
\end{equation}
If $\mu - c \mu^{1/3} \leq y \leq \mu + c \mu^{1/3}$, $0<c \ll 1$, then
\begin{equation*}
    S_{k}(y) = S_{k}(\mu - c \mu^{1/3}) +    \frac{\Gamma(k+1)}{\Gamma(k+n)}  \int_{\mu - c \mu^{1/3}}^{y}  L_{k}^{n-1}(x) \, e^{-\frac{1}{2} x }  x^{n-1} \frac{dx}{\sqrt{\lambda^2 - 4 x^2}},
\end{equation*}
so using the the Airy regime asymptotic of Laguerre polynomial, the contribution due to the  second term above is  
\begin{equation}
    \begin{split}
        & \simeq   \int_{\mu - c \mu^{1/3}}^{y}   \mu^{- \frac{4}{3}}  \frac{dx}{\lambda} \simeq \lambda^{-1} \mu^{-4/3} \mu^{1/3}= \lambda^{-1} \mu^{-1},
    \end{split}
\end{equation}
which combined with previous estimates give 
\begin{equation}\label{B case around nbd of mu}
    \begin{split}
        |S_{k}(y)| & \simeq  \lambda^{-1} \mu^{1/2} + \lambda^{-1} \mu^{- \frac{1}{2}} +  \lambda^{-1} \mu^{-1}\\
        & \simeq  \lambda^{-1} \mu^{1/2}.
    \end{split}
\end{equation}

Finally, if $\mu + c \mu^{1/3} \leq y \leq \frac{3}{2} \mu$, then we write
\begin{equation*}
    S_{k}(y) = S_{k}(\mu + c \mu^{1/3}) +    \frac{\Gamma(k+1)}{\Gamma(k+n)}  \int_{\mu + c \mu^{1/3}}^{y}  L_{k}^{n-1}(x) \, e^{-\frac{1}{2} x }  x^{n-1} \frac{dx}{\sqrt{\lambda^2 - 4 x^2}},
\end{equation*}
wherein for the second part we can use asymptotic (\ref{Airy merging with exponential}) and thus, in modulus,  it is 
\begin{equation*}
    \begin{split}
        & \lesssim \mu^{-\frac{4}{3}} \mu^{\frac{1}{12}} \int_{\mu + c \mu^{1/3}}^{y} (x-\mu)^{-\frac{1}{4}} e^{\frac{2}{3}  \mu^{-1/2} (x-\mu)^{3/2}  } \frac{dx}{\lambda}\\
        & \overset{x \rightarrow \mu x}{=} \lambda^{-1} \mu^{-\frac{4}{3}} \mu^{\frac{1}{12}} \mu^{-\frac{1}{4}} \int_{1 + c \mu^{-2/3}}^{y/ \mu} (x-1)^{-\frac{1}{4}} e^{\frac{2}{3}  \mu (x-1)^{3/2}  } dx.
    \end{split}
\end{equation*}
A short calculation yields that the last integral is $\simeq \mu^{-1/2}$ so the second part is bounded by $\lambda^{-1} \mu^{-\frac{5}{3}}$. On the other hand bound for the first part is given by (\ref{B case around nbd of mu}). So, in the region $\mu + c \mu^{1/3} \leq y \leq \frac{3}{2} \mu$, we obtain total bound of 
\begin{equation}\label{B case after nbd of mu}
    \begin{split}
        |S_{k}(y)| & \simeq  \lambda^{-1} \mu^{1/2} + \lambda^{-1} \mu^{- \frac{5}{3}} \simeq  \lambda^{-1} \mu^{1/2}.
    \end{split}
\end{equation}

Putting all this together, and using $\lambda^{1/2} \simeq \mu$ in this Case B, we obtain here that 
\begin{equation}
    \begin{split}
        |R_{k}^{II}(\lambda, \sigma)| & \simeq \lambda^{-n + 1}   \lambda^{-1} \mu^{1/2}  
        \simeq  (\lambda \mu )^{-\left( \frac{2n}{3} - \frac{1}{6}  \right)}.
    \end{split}
\end{equation}
\textbf{Case C.} In view of Case B, it suffices to bound the portion of  $S_{k}(y)$ after Airy regime, that is to say $\mu \ll x \leq y=c \lambda^{1/2} $:
\begin{equation*}
   \left|  \frac{\Gamma(k+1)}{\Gamma(k+n)}  \int_{ \frac{1}{a}\mu }^{y}  L_{k}^{n-1}(x) \, e^{-\frac{1}{2} x }  x^{n-1} \frac{dx}{\sqrt{\lambda^2 - 4 x^2}} \right| \ll \lambda^{-1} \mu^{- \frac{n}{2} - \frac{1}{2} - N },
\end{equation*}
for all $N>0$. Proving this estimate would ensure in Case C also we have
\begin{equation}
    \begin{split}
        |R_{k}^{II}(\lambda, \sigma)| & \simeq \lambda^{-n + 1}   \lambda^{-1} \mu^{1/2}  \simeq  (\lambda \mu )^{-\left( \frac{2n}{3} - \frac{1}{6}  \right)}.
    \end{split}
\end{equation}

But from the asymptotic (\ref{expo regime}) of $L_{k}^{n-1}$ in this region the above integral expression is majorized by
\begin{equation*}
     \lambda^{-1} \mu^{-\frac{n}{2}-\frac{1}{2} } \int_{ \frac{1}{a}\mu }^{c \lambda^{1/2}}  x^{ \frac{n}{2} -1 } e^{- \frac{2}{3} x} dx =  \lambda^{-1} \mu^{-\frac{n}{2}-\frac{1}{2} } \left[ \Gamma( {\textstyle\frac{n}{2} } ) - O (\lambda^{-N/2}) - \left( \Gamma( {\textstyle\frac{n}{2} } ) - O (\mu^{-N}) \right) \right]
\end{equation*}
whence the claim follows.

Putting all the cases \textbf{A}, \textbf{B} and \textbf{C} together, we conclude that the second term in (\ref{I plus II Coeff. Rk as Laguerre integral}) has the bound
\begin{equation}
    |R_{k}^{II}(\lambda, \sigma)| \simeq (\lambda \mu )^{-\left( \frac{n}{2} - \frac{1}{4}  \right)}.
\end{equation}

\subsection*{Treatment of the auxiliary part of oscillatory integral: $R_{k}^{I}(\lambda, \sigma)$, $\lambda \gg 1$} Next, we show that the first part, that is $R_{k}^{I}(\lambda, \sigma)$ in (\ref{I plus II Coeff. Rk as Laguerre integral}), is relatively smaller than the second. Here, extra  oscillations given by $e^{i \frac{1}{4} \lambda \sin{\theta}}$ will play the crucial role and will be exploited using the following version (to avoid differentiating Laguerre polynomials) of van der Corput lemma about oscillatory integral, for example see  \cite[pp. 332-334]{Ste93}.

\begin{lemma}\label{Van der Corput}
    Let $\varPhi$ be a smooth function on an interval $I=[a, b]$ and $\varPsi$ be a smooth positive decreasing function on $[a, b]$. Suppose that $|\varPhi'| \geq L$ on $I$ and suppose additionally that $\varPhi'$ is monotonic on $I$. Then, there exist a universal constant $C>0$ such that
    \begin{equation*}
        \left|  \int_{a}^{b} e^{i \varPhi(s)} \varPsi(s)  ds \right| \leq C \frac{\varPsi(a)}{ L}.
    \end{equation*}
In case when amplitude function $\varPsi$ is instead increasing then the above oscillatory integral is bounded by $ C \frac{2 \varPsi(b) - \varPsi(a)}{ L}.$
\end{lemma}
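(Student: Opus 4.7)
The plan is to establish the van der Corput estimate by two successive integrations by parts: the first exploits the oscillation coming from $\varPhi$, and the second brings in the amplitude $\varPsi$ via an Abel-type rearrangement.

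First I would introduce the auxiliary antiderivative $F(t) := \int_a^t e^{i\varPhi(s)}\,ds$ and prove the uniform bound $\sup_{t\in[a,b]} |F(t)| \lesssim 1/L$. The standard device is to write $e^{i\varPhi(s)} = \frac{1}{i\varPhi'(s)}\frac{d}{ds} e^{i\varPhi(s)}$, which is legitimate since $|\varPhi'| \geq L > 0$ on all of $I$, and then to integrate by parts. The boundary contributions are each $O(1/L)$. For the remaining interior integral, monotonicity of $\varPhi'$ together with its constant sign (forced by $|\varPhi'| \geq L$) ensures that $1/\varPhi'$ is itself monotonic, so its total variation telescopes to
\begin{equation*}
\int_a^t \left| \frac{d}{ds} \frac{1}{\varPhi'(s)} \right| ds = \left| \frac{1}{\varPhi'(t)} - \frac{1}{\varPhi'(a)} \right| \leq \frac{2}{L},
\end{equation*}
again producing a contribution of size $O(1/L)$.

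Next I would integrate by parts in the full integral, writing $\int_a^b e^{i\varPhi(s)} \varPsi(s)\,ds = \int_a^b \varPsi(s)\,F'(s)\,ds = \varPsi(b) F(b) - \int_a^b F(s)\, \varPsi'(s)\,ds$. Since $\varPsi$ is monotonic, $\varPsi'$ has a definite sign and $\int_a^b |\varPsi'(s)|\,ds = |\varPsi(b) - \varPsi(a)|$. Replacing $|F|$ by $\sup_{[a,b]} |F| \lesssim 1/L$ yields, in the decreasing case, the bound $\sup|F| \cdot (\varPsi(b) + (\varPsi(a) - \varPsi(b))) = \sup|F| \cdot \varPsi(a) \lesssim \varPsi(a)/L$, and in the increasing case, the bound $\sup|F| \cdot (\varPsi(b) + (\varPsi(b) - \varPsi(a))) = \sup|F| \cdot (2\varPsi(b) - \varPsi(a)) \lesssim (2\varPsi(b)-\varPsi(a))/L$. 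Both match the claimed estimates exactly.

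This is the classical van der Corput argument (cf.\ \cite[pp.\ 332--334]{Ste93}), and no step presents a substantive obstacle. The only care required is tracking signs in the final bookkeeping, so that the decreasing and increasing cases produce the precisely stated constants rather than the cruder symmetric bound $(\varPsi(a) + \varPsi(b))/L$.
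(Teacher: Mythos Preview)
Your proof is correct and reproduces precisely the classical argument. The paper does not supply its own proof of this lemma; it merely cites \cite[pp.~332--334]{Ste93}, which is exactly the source and argument you have written out.
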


This time we will work on the original coordinate $\theta$ when integrating to avoid working with singular amplitudes:
\begin{equation}\label{Recall 1 II Coeff. Rk as Laguerre integral}
   \begin{split}
        R_{k}^{I}(\lambda, \sigma) 
        & = c_{n} \frac{\Gamma(k+1)}{\Gamma(k+n)}    \int_{0}^{  \pi/2 - c \lambda^{-1/2} } L_{k}^{n-1} \left( {\textstyle\frac{1}{2} } \lambda  \cos{\theta}  \right) \, e^{ - \frac{1}{4} \lambda \cos{\theta}}  e^{i \frac{1}{4} \lambda \sin{\theta}} (\cos{\theta})^{n-1} d \theta.
   \end{split}
\end{equation}
However, to decide the case study and compact presentation, it will be helpful to read integrand in the notation $x=x(\theta)= \frac{1}{2} \, \lambda \cos{\theta}$  as
\begin{equation*}
   c_{n} \lambda^{-n+1} \frac{\Gamma(k+1)}{\Gamma(k+n)} L_{k}^{n-1}(x) \, e^{-\frac{1}{2} x }  x^{n-1} e^{i \frac{1}{4} \lambda \sin{\theta}}
\end{equation*}

In view of the behavior of Laguerre polynomials we need to consider the following three exhaustive cases:
\begin{equation*}
    \begin{split}
        &  \mathcal{A} : 1 \leq  c \lambda^{1/2}  \leq b \mu  \ \ \ (\text{``Bessel regime" and after} ), \\
        & \mathcal{B}: a \mu  \leq c \lambda^{1/2} \leq {\textstyle\frac{1}{a} } \mu, \ \ 0<a <b<1,  \ \ \ (\text{ ``Airy regime" and after}),\\
        & \mathcal{C}: {\textstyle\frac{1}{a} } \mu \leq  c \lambda^{1/2}  \ \ \  (\text{ only  ``exponential regime"}).\\
    \end{split}
\end{equation*}
\noindent \textbf{Case $\mathcal{C}$.} We begin with the case when oscillatory integral lives only  inside the exponential regime. From the asymptotic (\ref{expo regime}) of $L_{k}^{n-1}$ in this region, it suffices to bound 
\begin{equation*}
   \left| \lambda^{-n+1}  \int_{0 \leq \theta \lesssim \frac{\pi}{2} - c \lambda^{-1/2}} e^{i \frac{1}{4} \lambda \sin{\theta}}  \mu^{-\frac{n}{2} - \frac{1}{2}} x^{\frac{n}{2} - 1} \,   e^{-\frac{2}{3} x }  d \theta \right|
\end{equation*}
Here, $x= x(\theta)=\frac{1}{2} \, \lambda \cos{\theta}$. 

Since $\lambda \gg 1$, so for all $n \geq 1$ the amplitude (off the factors constants for integration) $\theta \mapsto x(\theta)^{\frac{n}{2}-1} e^{-\frac{2}{3} x(\theta)}$ increases on $[0, \frac{\pi}{2}- c \lambda^{-1/2}]$. Also, on this interval phase function $\varPhi(\theta):= \frac{1}{4} \lambda \sin{\theta}$ has its derivative bounded below as : $|\varPhi'(\theta)| \gtrsim \lambda^{1/2} $.  Thus, using van der Corput lemma, the above integral is 
\begin{align}
    & \lesssim \lambda^{-n+1} \left( \mu^{-\frac{n}{2} - \frac{1}{2}} \frac{  \left( \lambda^{1/2} \right)^{\frac{n}{2}-1} e^{-\frac{2}{3} \lambda^{1/2}}         }{\lambda^{1/2}} \right) \label{Corput exponential}  \\
        & = (\lambda \mu)^{- \frac{n}{2}+ \frac{1}{4}} \, \mu^{-3/4} \lambda^{- (n+1)/4} e^{-\frac{2}{3} \lambda^{1/2}}, \nonumber
\end{align}
which is acceptable. 

\noindent \textbf{Case $\mathcal{B}$.} We further distinguish several possibilities. \medskip
\begin{center}
\begin{tikzpicture}
\draw[thick, -latex] (-1, 0) -- (12.5, 0) node[right] {$x$};

\draw[line width=1.5mm, gray] (1, 0) -- (11, 0);


\foreach \x/\aboveLabel/\belowLabel in {1/$x=c \lambda^{1/2}$/$\theta = \frac{\pi}{2} - c \lambda^{-\frac{1}{2}}$, 2.7/$a \mu$/$\theta_{4}$, 4/$\mu - \mu^{1/3}$/$\theta_{3}$, 5.5/$\mu$/, 7/$\mu + \mu^{1/3}$/$\theta_{2}$,  9/$\frac{3}{2} \mu$/$\theta_{1}$, 11/$x=\frac{1}{2} \lambda$/$\theta=0$}
{
    \filldraw[thick] (\x, 0) circle (2pt);

    \node[above] at (\x, 0) {\aboveLabel};

    \node[below] at (\x, 0) {\belowLabel};
}

\end{tikzpicture}
\end{center}
\medskip









\noindent \textbf{Case $\mathcal{B}_3$.} If $\mu + \mu^{1/3} \leq c \lambda^{1/2} \leq \frac{3}{2} \mu$ then, setting $\frac{\lambda}{2} \cos{ \theta_{1} }:= \frac{3}{2} \mu $, we split the integral in $R_{k}^{I}(\lambda, \sigma)$ into two integrals $\textbf{I}$ and $\textbf{II}$ taken over intervals $[0, \theta_{1}]$ and $[\theta_{1}, \pi/2 - c \lambda^{-1/2}]$, respectively. For $\textbf{II}$, since $\mu + \mu^{1/3} \leq c \lambda^{1/2} \leq x(\theta) \leq \frac{3}{2} \mu$ in this case, we use the asymptotic (\ref{Airy merging with exponential}) to bound
\begin{equation*}
  |\textbf{II}| \simeq \left| \lambda^{-n+1}  \int_{\theta_{1} \leq \theta \lesssim \frac{\pi}{2} - c \lambda^{-1/2}} \mu^{{-\textstyle\frac{4}{3}} } \,   \mu^{ {\textstyle\frac{1}{12} } }  (x- \mu)^{ - {\textstyle\frac{1}{4}  } } \, e^{ - {\frac{2}{3}  } \mu^{- \frac{1}{2}} (x- \mu)^{3/2}    }  \, e^{i \frac{1}{4} \lambda \sin{\theta}}    d \theta \right|.
\end{equation*}
In this interval $[\theta_{1}, \pi/2 - c \lambda^{-1/2}]$  derivative of the phase $\varPhi'(\theta)=x(\theta)= \frac{1}{2} \lambda \cos{\theta}$ is still bounded below by $\lambda^{1/2}$ and amplitude is clearly increasing in $\theta$ variable. So from van der Corput lemma 
\begin{equation}\label{theta1 to theta2}
  \begin{split}
   \lambda^{n-1}   | \textbf{II} | & \lesssim    \, \frac{ \mu^{{-\textstyle\frac{4}{3}} } \,   \mu^{ {\textstyle\frac{1}{12} } }  (c \lambda^{1/2} - \mu)^{ - {\textstyle\frac{1}{4}  } } \, e^{ - {\frac{2}{3}  } \mu^{- \frac{1}{2}} ( c \lambda^{1/2} - \mu  )^{3/2}    } } { \lambda^{1/2} } \\
   & \lesssim    \, \mu^{{-\textstyle\frac{4}{3}} } \,      \lambda^{-1/2}, 
  \end{split}
\end{equation}
or equivalently, 
\begin{equation*}
  \begin{split}
      | \textbf{II} | & \lesssim     (\lambda \mu)^{ -\frac{n}{2} + 1/4} \mu^{-\frac{n}{2}- 1- \frac{1}{12}}  .
  \end{split}
\end{equation*}
Whereas integral $\textbf{I}$, which is taken over $[0, \theta_{1}]$ (or equivalently in $x$ coordinates it is the interval $[\frac{3}{2} \mu, \frac{1}{2} \lambda]$) is estimated as in the  Case $\mathcal{C}$ (this time amplitude maximizes at $x(\theta_{1})= \frac{3}{2} \mu$, and lower bound on phase's derivative, which is  $x(\theta)$, is again $\frac{3}{2} \mu$): 
\begin{equation*}
  \begin{split}
      |\textbf{I}| & \lesssim \lambda^{-n+1} \mu^{-\frac{n}{2} - \frac{1}{2}} \frac{  \left( \frac{3}{2} \mu \right)^{\frac{n}{2}-1} e^{-\frac{2}{3} \frac{3}{2} \mu }         }{{\textstyle\frac{3}{2}} \mu} \lesssim  (\lambda \mu)^{ -\frac{n}{2} + 1/4} \mu^{-\frac{n}{2}-\frac{5}{4}} e^{-c \mu} .
  \end{split}
\end{equation*}
Therefore, 
\begin{equation*}
  \begin{split}
      |R_{k}^{I}(\lambda, \sigma)| \lesssim  (\lambda \mu)^{-\frac{n}{2} + \frac{1}{4}}  \mu^{-\frac{n}{2} -1-  \frac{1}{4}} e^{-c \mu}.
  \end{split}
\end{equation*}

\noindent \textbf{Case $\mathcal{B}_2$.} If $\mu - \mu^{1/3} \leq c \lambda^{1/2} \leq \mu + \mu^{1/3}$. Set $\frac{\lambda}{2} \cos{\theta_{2}}:=\mu + \mu^{1/3}$. Now, we break the the integral in $R_{k}^{I}(\lambda, \sigma)$ into three pieces $\textbf{J}_1$, $\textbf{J}_2$, $\textbf{J}_{3}$ taken over the regions $[0, \theta_{1}]$, $[\theta_{1}, \theta_{2}]$ and $[\theta_{2}, \pi/2 - c \lambda^{-1/2}]$, respectively. For $\textbf{J}_{3}$, observe that the interval $[\theta_{2}, \pi/2 - c \lambda^{-1/2}]$ in $x=x(\theta)$ coordinates reads as $\mu - \mu^{1/3} \leq c \lambda^{1/2} \leq x \leq \mu + \mu^{1/3}$. Using the Airy region asymptotic  (\ref{around mu}) and then van der Corput lemma we obtain
\begin{equation*}
  \begin{split}
      |\textbf{J}_{3} | &  \simeq \left| \lambda^{-n+1}  \int_{\theta_{2} \leq \theta \lesssim \frac{\pi}{2} - c \lambda^{-1/2}} \mu^{{-\textstyle\frac{4}{3}} }   \, e^{i \frac{1}{4} \lambda \sin{\theta}}    d \theta \right|\\
      & \lesssim \lambda^{-n+1} \frac{  \mu^{{-\textstyle\frac{4}{3}} }  }{ \lambda^{1/2}   } \simeq (\lambda \mu)^{- \frac{n}{2} + \frac{1}{4}} \cdot \mu^{- \frac{n}{2} - 1 - \frac{1}{12}} .
  \end{split}
\end{equation*}
The prospective computations for the term $\textbf{J}_{2}$ (that is using van der Corput on the interval $[\theta_{1}, \theta_{2}]$, the relation between $\theta$ and $x=x(\theta)$, and the fact that here $\lambda^{1/2} \simeq \mu$) when compared with the corresponding estimation in (\ref{theta1 to theta2}) suggest that  the contribution due to $\textbf{J}_{2}$ is sames as that of $\textbf{II}$:
\begin{equation*}
  \begin{split}
     | \textbf{J}_{2} | & \lesssim   \lambda^{ -\frac{n}{2} + \frac{1}{4}} \mu^{-n-1} e^{-c \mu} \simeq (\lambda \mu)^{ -\frac{n}{2} + 1/4} \mu^{-\frac{n}{2}-1-\frac{1}{12}}  .
  \end{split}
\end{equation*}
And, with the same argument as in Case $\mathcal{C}$ (with $c\lambda^{1/2}$ replaced by $\frac{3}{2} \mu$ inside the parentheses in (\ref{Corput exponential})) we also get
\begin{equation*}
  \begin{split}
     | \textbf{J}_{1} | & \lesssim   \lambda^{ -\frac{n}{2} + \frac{1}{4}} \mu^{-n-1} e^{-c \mu} \simeq (\lambda \mu)^{ -\frac{n}{2} + 1/4} \mu^{-\frac{n}{2}-1-\frac{1}{4}} e^{-c \mu} .\end{split}
\end{equation*}
Therefore, altogether, we get  
\begin{equation} \label{conclusion CaseB2}
  \begin{split}
      |R_{k}^{I}(\lambda, \sigma)| \lesssim  (\lambda \mu)^{- \frac{n}{2} + \frac{1}{4}} \cdot \mu^{- \frac{n}{2} - 1 - \frac{1}{12}}.
  \end{split}
\end{equation}

\noindent \textbf{Case $\mathcal{B}_1$.} If $a \mu  \leq c \lambda^{1/2} \leq \mu - \mu^{1/3}$. In view of the case study Case $\mathcal{B}_{2}$, we only need to concentrate on the interval $[\theta_{3}, \pi / 2 - c \lambda^{-1/2}]$ where $\frac{\lambda}{2} \cos{\theta_{3}}:= x(\theta_{3})=\mu - \mu^{1/3}$. The total contribution due to remaining intervals $[0,\theta_{1}]$, $[\theta_{1},\theta_{2}]$, $[\theta_{2},\theta_{3}]$ is given by (\ref{conclusion CaseB2}). Let $ \textbf{K}_{4}$ denote the portion of $R_{k}^{I}(\lambda, \sigma)$ considered on $[\theta_{3}, \pi / 2 - c \lambda^{-1/2}]$. Then, in view of (\ref{Bessel-Airy interface}), we have
\begin{equation}\label{1 last piece Airy-Bessel}
  |\textbf{K}_{4}| \simeq \left| \lambda^{-n+1}  \int_{\theta_{3} \leq \theta \lesssim \frac{\pi}{2} - c \lambda^{-1/2}} \mu^{{-\textstyle\frac{4}{3}} } \,   \mu^{ {\textstyle\frac{1}{12} } }  ( \mu - x  )^{ - {\textstyle\frac{1}{4}  } }   \, e^{i \left( \pm \frac{2}{3}  |\xi(x(\theta))|^{3/2}    + \frac{1}{4} \lambda \sin{\theta}  \right) }    d \theta \right|.
\end{equation}
 One can easily lower bound the derivative of the phase $\Phi_{\pm}(\theta):= \pm \frac{2}{3}  |\xi(x(\theta))|^{3/2}    + \frac{1}{4} \lambda \sin{\theta}$  to be
\begin{equation*}
    \left| \frac{d}{d \theta} \Phi_{\pm} (\theta) \right| \gtrsim \mu^{5/3},
\end{equation*}   
for $\theta_{3} \leq \theta \lesssim \frac{\pi}{2} - c \lambda^{-1/2}$. Indeed, since we are in the case $2 (a)$ of \Cref{Laguerre-Asymp-Our-setup}, so $|\xi( x(\theta)) | =  \mu^{2/3} \phi \left( {\textstyle\frac{x(\theta)}{\mu} } \right)$ or equivalently $|\xi( x(\theta)) |^{3/2} =  \mu \, \phi \left( {\textstyle\frac{x(\theta)}{\mu} } \right)^{3/2}$. From \cite[page 698]{Askey-Wainger}, $\phi(t)$ is decreasing on the interval $(0,1]$, and $\phi(t) \left[ \phi^{\prime}(t) \right]^2  = \frac{1}{4} \left( \frac{1}{t} -1 \right)$ for all $0 < t< \infty$ whence $ \frac{d}{d \theta} \left( |\xi( x(\theta)) |^{3/2} \right) $ $ = { \textstyle\frac{3}{8} } \lambda \left(  \frac{2 \mu}{ \lambda \cos{\theta}} -1 \right)^{1/2} \sin{\theta}$. Therefore, 
\begin{equation*}
    \frac{d}{d \theta} \left( \Phi_{\pm} (\theta)  \right)=  \frac{1}{4} \lambda  \left[ \pm \left(  \frac{2 \mu}{ \lambda \cos{\theta}} -1 \right)^{1/2} \sin{\theta} + \cos{ \theta} \right].
\end{equation*}
On the one hand, since $\theta_{3} \leq \theta \lesssim \frac{\pi}{2} - c \lambda^{-1/2}$ or equivalently $a \mu \leq c \lambda^{1/2} \leq x (\theta) \leq \mu - \mu^{1/3}$, one can lower bound ${\textstyle\frac{\lambda}{4}} \sin{\theta} \left( \frac{2 \mu}{\lambda \cos{\theta}} -1 \right)^{1/2} \gtrsim \lambda \mu^{-1/3} \simeq \mu^{5/3}$ and on the other hand we have bound $\frac{\lambda}{4} \cos{\theta}= x(\theta) \leq \mu - \mu^{1/3}$. The claimed lower bound on the derivative of $\Phi_{\pm}$ now follows from the triangle inequality. As a consequence, van der Corput lemma and the observation that in  $\theta$-coordinates the ``amplitude", here, $ \theta \mapsto (\mu - x(\theta) )^{-1/4}$ is decreasing, yield the bound
\begin{equation*}
  \begin{split}
 |\textbf{K}_{4}| & \lesssim \lambda^{-n+1} \mu^{-\frac{4}{3} } \mu^{- \frac{1}{12}} \frac{  \left(  \mu - x(\theta_{3}) \right)^{-1/4}  }{ \mu^{5/3} }, \ \ \  (\, x(\theta_{3})= \mu - \mu^{1/3} \,), \\
 & = \lambda^{-n+1} \mu^{-3} \\
  & \simeq (\lambda \mu )^{-\frac{n}{2} + \frac{1}{4} } \mu^{-\frac{n}{2} - 1 - \frac{3}{4}},
  \end{split}
\end{equation*}
which is acceptable.

\noindent \textbf{Case $\mathcal{A}$.} Within this case of $1 \leq  c \lambda^{1/2}  \leq b \mu$ the only new situation which can occur is that of when $1 \leq  c \lambda^{1/2} \leq \lambda \leq b \mu$. The remaining has already been covered in previous cases $\mathcal{B}_{3},\mathcal{B}_{2},\mathcal{B}_{1}, \mathcal{C}$. In this situation of $1 \leq  c \lambda^{1/2} \leq \lambda \leq b \mu$, we can estimate full piece $R_{k}^{I}(\lambda, \sigma)$ once because we are in Bessel regime $1 / \mu \leq 1 \leq x  \leq \frac{1}{2} \lambda \leq b \mu$. In view of the asymptotic (\ref{1 by mu to mu}), it suffices to estimate
\begin{equation}\label{Purely Bessel}
   \left| \lambda^{-n+1}  \int_{0 \leq \theta \lesssim \frac{\pi}{2} - c \lambda^{-1/2}} \,   x^{\frac{n}{2}-1} \mu^{-\frac{n}{2}+\frac{1}{2}} \, \psi^{\prime}({\textstyle\frac{x}{\mu}})^{-1/2}   \, e^{i \left( \pm \mu   \psi \left({\textstyle\frac{x}{\mu}} \right)   + \frac{1}{4} \lambda \sin{\theta}  \right) }    d \theta \right|.
\end{equation}
First we study where, in the region of integration, the phase $\Psi_{\pm}:= \pm \mu   \psi \left({\textstyle\frac{x}{\mu}} \right)   + \frac{1}{4} \lambda \sin{\theta}$ is stationary. Recall from \cite[page 697]{Askey-Wainger} that $\psi$ satisfies $ \psi^{\prime}(t)= {\textstyle\frac{1}{2}}\left(  \frac{1}{t} -1 \right)^{1/2}$, $0<t<\infty$, whence
\begin{equation}\label{stationary2}
    \frac{d}{d \theta} \left( \Psi_{\pm} (\theta)  \right)=  \frac{1}{4} \lambda  \left[ \mp \left(  \frac{2 \mu}{ \lambda \cos{\theta}} -1 \right)^{1/2} \sin{\theta} + \cos{ \theta} \right].
\end{equation}
Clearly, the derivative of the phase $\Psi_{-}$, in (\ref{Purely Bessel}), has a lower bound of $c \lambda^{1/2}$ throughout the region of integration, while the amplitude function decreases in $\theta$. An application of the van der Corput lemma then easily yields the bound of $(\lambda \mu)^{-\frac{n}{2} + \frac{1}{4}} \lambda^{-\frac{1}{2}}$ for (\ref{Purely Bessel}), which is acceptable for our purpose. Whereas, for $\Psi_{+}$ let $\theta_{0}$ be its stationary point, that is, $  \Psi_{+}^{\prime}  (\theta_{0} )=0$. Rearranging (\ref{stationary2}) at $\theta_{0}$ we get $\cos^{2}{\theta_{0}} + \frac{\lambda}{2 \mu} \cos{\theta_{0}} -1 =0$ or equivalently 
 \begin{equation*}
     \cos{\theta_{0}}= - \frac{\lambda}{4 \mu} + \sqrt{ \left( \frac{\lambda}{4 \mu}  \right)^2 + 1 }.
 \end{equation*}
 Since $\frac{\lambda}{2 \mu} \leq b/2 \ll 1$ so $\cos{\theta_{0}} \sim 1 - \frac{\lambda}{4 \mu}$  which forces the stationary point $\theta_{0}$ to lie inside the region of integration $[0,  \frac{\pi}{2} - c \lambda^{-1/2}] $. 
Moreover, using this approximation of $\cos{\theta_{0}}$,  we can compute the $\Psi_{+}^{\prime \prime}(\theta_{0})$ to get an idea of the size of stationary region:
 \begin{equation}\label{size stationary2}
   \begin{split}
        -\frac{d^2}{d \theta^2} \left( \Psi_{+} (\theta_{0})  \right) & =  \frac{1}{2} \lambda  \left[  \left(  \frac{2 \mu}{ \lambda \cos{\theta_{0}}} -1 \right)^{-1/2} \left\lbrace  \frac{1}{2 \cos{\theta_{0}}}    + \frac{2\mu}{\lambda} - \cos{ \theta_{0}}  \right\rbrace + \sin{\theta_{0}}  \right]\\
        & \simeq \frac{1}{2} \lambda  \left(  \frac{\lambda}{ 2 \mu } \cos{\theta_{0}}  \right)^{1/2} \left[   \frac{1}{2 \cos{\theta_{0}}}    + \frac{2\mu}{\lambda} - \cos{ \theta_{0}}     + 1  \right]\\
        & \simeq   \frac{1}{2\sqrt{2}} \frac{ \lambda^{3/2} }{\mu^{1/2}}  \left(  1-  \frac{\lambda}{ 4 \mu }   \right)^{1/2} \left[   \frac{1}{2 }\left(  1-  \frac{\lambda}{ 4 \mu }   \right)^{-1}    + \frac{2\mu}{\lambda} + \frac{\lambda}{2 \mu}  \right]\\
        & \simeq   \frac{1}{2\sqrt{2}} \frac{ \lambda^{3/2} }{\mu^{1/2}}  \left(  1-  \frac{\lambda}{ 8 \mu }   \right) \left[   \frac{1}{2 }\left(  1 + \frac{\lambda}{ 4 \mu }   \right)    + \frac{2\mu}{\lambda} + \frac{\lambda}{2 \mu}  \right]\\
        & \simeq   \frac{ \lambda^{3/2} }{\mu^{1/2}}  \left(  1-  \frac{\lambda}{ 8 \mu }   \right) \frac{\mu}{\lambda} \simeq   \lambda^{1/2} \mu^{1/2}.
   \end{split}
\end{equation}
Whence, the main contribution to (\ref{Purely Bessel}) is given by the stationary region
\begin{equation*}\label{Purely Bessel1}
   \begin{split}
      &  \left| \lambda^{-n+1}  \int_{ | \theta - \theta_{0} | \lesssim  (\lambda \mu)^{-1/4}} \,   x^{\frac{n}{2}-1} \mu^{-\frac{n}{2}+\frac{1}{2}} \, \psi^{\prime}({\textstyle\frac{x}{\mu}})^{-1/2}      d \theta \right| \\
      & \simeq \left| \lambda^{-n+1}  \int_{ | \theta - \theta_{0} | \lesssim  (\lambda \mu)^{-1/4}} \,   x^{\frac{n}{2}-1} \mu^{-\frac{n}{2}+\frac{1}{2}} \, ({\textstyle\frac{x}{\mu}})^{1/4}      d \theta \right|\\
      & \simeq  \lambda^{-n+1} \lambda^{\frac{n}{2} -1 + \frac{1}{4}} \mu^{- \frac{n}{2} + \frac{1}{4}}  \int_{ | \theta - \theta_{0} | \lesssim  (\lambda \mu)^{-1/4}}  \, (\cos{\theta})^{\frac{n}{2}-1+\frac{1}{4}}      d \theta \\
      & \simeq   \lambda^{-\frac{n}{2} + \frac{1}{4}} \mu^{- \frac{n}{2} + \frac{1}{4}}  (\lambda \mu)^{-1/4},
   \end{split}
\end{equation*}
where in the second equivalence we have used $ \psi^{\prime}(t)= \frac{1}{2} \left(  \frac{1}{t} -1 \right)^{1/2} \simeq t^{-1/2}$, for $t \ll 1$ and $x= x(\theta)=\frac{\lambda}{2} \cos{\theta}$, and $\cos{\theta_{0}} \simeq 1 - \frac{\lambda}{2 \mu} \simeq 1$.

For the remaining non-stationary region $| \theta - \theta_{0} | \gtrsim  (\lambda \mu)^{-1/4}  $ inside $[0,  \frac{\pi}{2} - c \lambda^{-1/2}] $ we can use van der Corput lemma to bound
\begin{equation}\label{Non-Stationary Purely Bessel}
   \begin{split}
       & \left| \lambda^{-n+1}  \int_{ | \theta - \theta_{0} | \gtrsim  (\lambda \mu)^{-1/4}} \,   x^{\frac{n}{2}-1} \mu^{-\frac{n}{2}+ \frac{1}{2}  } \, \psi^{\prime}({\textstyle\frac{x}{\mu}})^{-1/2}   \, e^{i \Psi_{\pm}(\theta) }    d \theta \right|\\
       & \simeq \left| \lambda^{-n+1} \mu^{-\frac{n}{2}+ \frac{1}{4}  } \int_{ | \theta - \theta_{0} | \gtrsim  (\lambda \mu)^{-1/4}} \,   x^{\frac{n}{2}- \frac{3}{4}}  \,  e^{i \Psi_{\pm}(\theta) }    d \theta \right|.
   \end{split}
\end{equation}
To this end, since $ \frac{d}{d \theta}(\Psi_{+})$ is decreasing in $\theta$ and $\frac{d}{d \theta}(\Psi_{+})(\theta_{0} \mp (\lambda \mu)^{-1/4}) \sim \frac{d^2}{d \theta^2}(\Psi_{+})(\theta_{0}) (\lambda \mu)^{-1/4}$ so, in view of (\ref{size stationary2}),  in the non-stationary region $| \theta - \theta_{0} | \gtrsim  (\lambda \mu)^{-1/4}  $, we have
\begin{equation*}
    \left| \frac{d}{d \theta}(\Psi_{+})(\theta_{0} \mp (\lambda \mu)^{-1/4}) \right|\gtrsim (\lambda \mu)^{1/4}.
\end{equation*}
We distinguish two cases. If $n\geq2$, then amplitude $\theta \mapsto x(\theta)^{\frac{n}{2} - \frac{3}{4}}$ is decreasing in $\theta$, so from van der Corput lemma, (\ref{Non-Stationary Purely Bessel}) is bounded by
\begin{equation*}
    \lambda^{-n+1} \mu^{-\frac{n}{2}+ \frac{1}{4}} \frac{  (\lambda/2)^{\frac{n}{2} - \frac{3}{4}}  }{   (\lambda \mu )^{1/4}   } \simeq \lambda^{-\frac{n}{2}+ \frac{1}{4}} \mu^{-\frac{n}{2}+ \frac{1}{4}} ( \lambda \mu)^{- \frac{1}{4}},
\end{equation*}
which is acceptable. If $n=1$, then the amplitude $\theta \mapsto x(\theta)^{ - \frac{1}{4}}$ is increasing function of $\theta$ so this time (\ref{Non-Stationary Purely Bessel}) is 
\begin{equation*}
  \begin{split}
       & \left| \mu^{- \frac{1}{4}}  \int_{ | \theta - \theta_{0} | \gtrsim  (\lambda \mu)^{-1/4}} \,   x^{-\frac{1}{4}}   \, e^{i \Psi_{\pm}(\theta) }    d \theta \right|\\
       & \lesssim  \mu^{- \frac{1}{4}} \left[  \frac{  x(\theta_{0} - (\lambda \mu)^{-1/4})^{-\frac{1}{4}}   }{(\lambda \mu)^{1/4}} + \frac{  x(\theta_{0} - c \lambda^{-1/2} )^{-\frac{1}{4}}   }{(\lambda \mu)^{1/4}}   \right]\\
       & \simeq (\lambda \mu)^{- \frac{1}{4}} \mu^{-\frac{1}{4}} \left[ (\lambda)^{-\frac{1}{4}} + (\lambda^{1/2})^{- \frac{1}{4}}\right]\\
       & \simeq (\lambda \mu)^{- \frac{1}{4}} \mu^{-\frac{1}{4}} \lambda^{-\frac{1}{8}},
  \end{split}
\end{equation*}
which is again acceptable. 

It remains to give the desired bound on $R_{k}(\lambda, \sigma)$, for $0 \leq k \leq k_{0}$. Because $k$ are now bounded so  $\mu=2(2k+n) \simeq_{k_{0}} 1$, and therefore $|\lambda| \mu \gg 1$ is equivalent to $|\lambda| \gg 1$. Again, by symmetry, we can assume $\lambda>0$. We decompose $R_{k}(\lambda, \sigma)$ as in (\ref{I plus II Coeff. Rk as Laguerre integral}): $R_{k}(\lambda, \sigma)= R_{k}^{I}(\lambda, \sigma) + R_{k}^{II}(\lambda, \sigma)$. As before the region of integration in $R_{k}^{II}(\lambda, \sigma)$ being the stationary region for the factor $\exp i \frac{1}{4} \lambda \sin{\theta}$ so 
\begin{equation*}\label{x var. II Coeff. Rk as Laguerre integral bounded k}
   \begin{split}
       R_{k}^{II}(\lambda, \sigma) & \sim c_{n} \lambda^{-n+1}  \frac{\Gamma(k+1)}{\Gamma(k+n)} e^{i \frac{1}{4} \lambda}     \int_{0}^{c \lambda^{1/2}}  L_{k}^{n-1}(x) \, e^{-\frac{1}{2} x }  x^{n-1} \frac{dx}{\sqrt{\lambda^2 - 4 x^2}} \\
       & \sim c_{n} \lambda^{-n}  \frac{\Gamma(k+1)}{\Gamma(k+n)}  \,  e^{i \frac{1}{4} \lambda}       \int_{0}^{c \lambda^{1/2}}  L_{k}^{n-1}(x) \, e^{-\frac{1}{2} x }  x^{n-1} dx\\
       & \lesssim_{k_{0},n} \lambda^{-n},
   \end{split}
\end{equation*}
for large $\lambda$, where the last inequality is a consequence of $k \simeq_{k_{0}}1$ and the decay of $e^{-x/2}$ for large $x$. On the other hand, for the piece $R_{k}^{I}(\lambda, \sigma)$, one can substitute the expression of Laguerre polynomial in it and, after swapping summation (in the polynomial) and integration, then applying van der Corput lemma for each term one can easily see $|R_{k}^{I}(\lambda, \sigma)| \lesssim_{k_{0}} \lambda^{-n - N/2}$, for all $N>0$. Combining these two bounds, one obtains $|R_{k}(\lambda, \sigma)| \lesssim_{k_{0}} \lambda^{-n} \simeq_{k_{0}} (\lambda \mu)^{-n}$, in the case $k \leq k_{0}$.

Altogether, this completes the proof of the lemma.

\section{Appendix}\label{appendix}
 The \Cref{all-large-dist-thm Hn} describes the distances of all unbounded sets with a positive upper density. In this section, we initially prove that the sets with higher upper density admit all distances. This is an easy consequence of the quantitative version of the generalized Steinhaus' theorem, which states that all the subsets of the unit ball whose measure is at least $1/2$ admit uniform interval distances. As mentioned in the Introduction, we state and prove these observations in \Cref{uniform distances of small sets}. In addition, we provide an example that explains the necessity of the hypothesis on the upper density by proving  \Cref{prop:Example}. The proofs are trivial analogues from the Euclidean case observed in \cite[Section 10.1]{Pramanik-Raani2023} and we write them out for completion.
 
\subsection{Uniform distances of small sets}\label{uniform distances of small sets proof}
\begin{theorem} \label{uniform distances of small sets} 
 Let $n \geq 1$ and $\rho<\frac12$. 
 There exists a small $c_{\rho} > 0$, depending only on $\rho$ and $n$, such that the following holds: for all Borel sets $E \subseteq B(0,1)\subset \mathbb H^n$ with $|E| \geq 1 -\rho$, we have 
\begin{equation}  \label{Steinhaus-Boardman-Hn}  
E \cdot E^{-1} \supseteq B(0,c_\rho); \quad \text{ as a result } \quad \Delta(E) \supseteq [0,c_{\rho}].  \end{equation} 

As an application, if $C_Q$ denotes the volume of the unit ball $B(0,1)\subset \mathbb H^n$, the sets in Heisenberg group with Kor\'anyi upper density at least $C_Q^{-1}(1-\rho)$ that is, 
\begin{equation}  \limsup_{R \rightarrow \infty} \sup_{x \in \H^n} \frac{|A \cap B(x,R)|}{|B(x,R)|} >C_Q^{-1}(1-\rho),\label{Boardman-condition-Hn} \end{equation} 
has all large distances:
\begin{equation}
\Delta(A) = [0, \infty).\label{part b}
\end{equation}
\end{theorem}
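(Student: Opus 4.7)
The plan is in two steps. First, I would establish the quantitative Steinhaus containment $E\cdot E^{-1}\supseteq B(0,c_\rho)$ by a volume-counting argument, and then rerun the same argument with $E$ replaced by $E^{-1}$ to also obtain $E^{-1}\cdot E \supseteq B(0,c_\rho)$, from which $\Delta(E)\supseteq[0,c_\rho]$ will follow. Second, I would deduce the density corollary from the first part by translating and dilating a density patch of $A$ into the unit Kor\'anyi ball.

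For the containment, fix $z \in \Ha$ and note that $z \in E\cdot E^{-1}$ is equivalent to $E \cap (z\cdot E) \neq \emptyset$, where $z\cdot E := \{z\cdot y : y \in E\}$ is the left translate. Since Lebesgue measure on $\Ha$ is Haar, $|z\cdot E| = |E|$; and since $E \subseteq B(0,1)$, the triangle inequality for the left-invariant metric $d_K$ gives $z\cdot E \subseteq B(0,1+|z|_K)$, a set of volume $C_Q(1+|z|_K)^Q$. Inclusion--exclusion then yields
\begin{equation*}
  |E \cap (z\cdot E)|\ \geq\ 2|E|\ -\ C_Q(1+|z|_K)^Q,
\end{equation*}
which, under the density hypothesis on $|E|$ together with $\rho<\tfrac12$, is strictly positive for all sufficiently small $|z|_K$, producing the required $c_\rho>0$. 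Because inversion is a measure-preserving involution of $\Ha$ sending $B(0,1)$ to itself ($|x^{-1}|_K=|x|_K$), the set $E^{-1}$ satisfies the same hypothesis as $E$, and rerunning the argument with $E^{-1}$ in place of $E$ gives $E^{-1}\cdot E \supseteq B(0,c_\rho)$. For every $r \in [0,c_\rho]$ the point $(r,0)\in \Ha$ has Kor\'anyi norm $r$ and lies in $\overline{B(0,c_\rho)}$, so one may write $(r,0) = y^{-1}x$ for some $x,y\in E$, giving $d_K(x,y) = r$; hence $\Delta(E)\supseteq[0,c_\rho]$.

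For the density corollary, fix $L>0$. The upper-density hypothesis produces $x_0\in\Ha$ and some $R > L/c_\rho$ with $|A\cap B(x_0,R)|/|B(x_0,R)| > C_Q^{-1}(1-\rho)$. Setting
\begin{equation*}
   E \ :=\ \delta_{1/R}\bigl(\, x_0^{-1}\cdot (A\cap B(x_0,R))\,\bigr)\ \subseteq\ B(0,1),
\end{equation*}
the Jacobian $R^{-Q}$ of $\delta_{1/R}$ together with left-invariance of Lebesgue measure transplants the density assumption on $A$ into exactly the hypothesis of the first part. Applying it at scale $L/R \in (0,c_\rho]$ yields $x,y\in E$ with $d_K(x,y) = L/R$, and pushing back through $\delta_R$ (using $|\delta_R(w)|_K = R|w|_K$ and that $\delta_R$ is a group automorphism) followed by left translation by $x_0$ produces two points of $A$ at Kor\'anyi distance exactly $L$. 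The argument is essentially Boardman's Euclidean proof; the only mildly subtle Heisenberg-specific point---the one obstacle worth flagging---is that $|a\cdot b|_K \neq |b\cdot a|_K$ in general, so that $E\cdot E^{-1}$ alone does not encode $\Delta(E)$, which is precisely why the volume argument has to be run symmetrically to yield also $E^{-1}\cdot E \supseteq B(0,c_\rho)$.
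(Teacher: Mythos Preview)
Your proof is correct and follows essentially the same approach as the paper: an inclusion--exclusion volume argument for the Steinhaus part and a translation--dilation reduction for the density corollary. Two differences are worth recording. First, you take the common container for $E$ and $z\cdot E$ to be the Kor\'anyi ball $B(0,1+|z|_K)$, whereas the paper uses a rectangular box $Q_{\rho,n}$; your choice is more natural given the hypothesis $E\subseteq B(0,1)$ and avoids the awkward fit between balls and boxes. Second, you explicitly flag and resolve the non-abelian subtlety that $\Delta(E)$ is read off from $E^{-1}\cdot E$ rather than $E\cdot E^{-1}$ (by running the volume argument on $E^{-1}$ as well), a point the paper passes over in silence when writing ``as a result $\Delta(E)\supseteq[0,c_\rho]$.''
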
 
\begin{proof}
Similar to the argument in \cite[Lemma II]{Boardman-1970}, we prove \eqref{Steinhaus-Boardman-Hn}. Suppose $E \subseteq B(0,1)$, $ |E| \geq 1 -\rho > \frac12$. Since $2(1-\rho) > 1$, we can choose a constant $c_{\rho} >0$ small enough such that
\begin{equation}
    (1+c_{\rho}\sqrt{2n})^{2n}   (1+c_{\rho}^2+c_{\rho}\sqrt{n/2})<2(1-\rho).                             \label{eq:crho}
\end{equation}
Indeed, such a choice is possible since $\rho<\frac12$.
Then for any $x = (z,t) \in [0, c_{\rho}]^{2n}\times [0,c_{\rho}^2]$, we have $$(x\cdot E)\cup E\subset Q_{\rho,n}:= [0,1+c_{\rho}\sqrt{2n}]^{2n}  \times \left[0,1+c_{\rho}^2+c_{\rho}\sqrt{n/2}\right].$$ 
Note that the Haar measure of $Q_{\rho,n}$ is at most $2(1-\rho)$, for the above choice of $c_{\rho}$ in \eqref{eq:crho}. Since $(x\cdot E) \cup E \subseteq Q_{\rho,n}$, we have
\begin{align*} &\bigl|(x \cdot E) \cup E \bigr| \leq |Q_{\rho,n}| < 2(1 - \rho) \leq |x \cdot E| + |E|.     \end{align*}
But this implies $ \bigl| (x \cdot E) \cap E \bigr| > 0$, and hence $(x \cdot E) \cap E$ is non-empty, that is, $x \in E \cdot E^{-1}$.  Since this is true for all $x \in [0, c_{\rho}]^{2n}\times [0,c_{\rho}^2]$, we conclude that $\Delta(E) \supseteq [0, c_{\rho}]$. 
\vskip0.1in
\noindent The proof of \eqref{part b} follows from \eqref{Steinhaus-Boardman-Hn} similar to the proof found in \cite[Section 10.1]{Pramanik-Raani2023}.\\
Given a set $A$ satisfying \eqref{Boardman-condition-Hn}, one can find a sequence $R_k \nearrow \infty$ and $x_k=(z_k,t_k) \in \mathbb H^n$ so that 
\begin{equation}
\frac{|(A \cap B(x_k,R_k)|}{|B(x_k,R_k)|} \geq  C_Q^{-1}(1 - \rho) ,  \quad \text{ for all $k$.} \label{eqdeduc}
\end{equation}
Let us define the transformation $\mathbb T_k: \R^{2n} \times \R \rightarrow \R^{2n} \times \R$ by \[ \mathbb T_k(w,u) := \delta_{R_k^{-1}}\left( x_k^{-1} \cdot (w,u)\right), \]  
where $``\cdot"$ is the group law of $\Ha$. 
Note that $E_k := \mathbb T_k(A \cap B(x_k,R_k))$ and $(z,t)\in E_k$ if and only if $x_{k} \cdot \delta_{R_k}(z,t)\in A\cap B(x_k, R_k)$. We have $E_k\subset B(0,1)$ and $|E_k| \geq 1 -\rho$. Invoking (\ref{Steinhaus-Boardman-Hn}), we obtain $ \Delta(E_k) \supseteq [0, c_{\rho}]$, and hence
\[ \Delta(A) \supseteq \bigcup_{n=1}^{\infty} \Delta \bigl[\mathbb T_k^{-1}(E_k) \bigr] \supseteq \bigcup_{n=1}^{\infty} R_k \Delta(E_k)  \supseteq \bigcup_{k=1}^{\infty} \bigl[0, c_{\rho} R_k\bigr] = [0, \infty), \] 
as claimed. 
\end{proof}

\subsection{Example}\label{Example Rice based}
We shall now turn to the example mentioned in the introduction and show why the positive upper Kor\'anyi density cannot be qualitatively improved. Thus strengthening the hypothesis in our main theorem. This is an easy variant, in the Heisenberg group context, of a parallel example by A. Rice, \cite{2020Rice}, formulated for general metric on $\R^n$. We closely follow the construction given in \cite{2020Rice} and \cite{Pramanik-Raani2023}.

{\emph{Proof of \Cref{prop:Example}}.}
The set  $\mathbb Z^{2n}\times \mathbb Z$ is  discrete with no limit points.  Hence there exists a sequence $ R_m \nearrow \infty$ and $\epsilon_m \searrow 0$ in the following order of choice $ R_1, \epsilon_1,  R_2, \epsilon_2, \ldots$ and obeying the properties: 
\begin{align} 
 \bigl| |(z,t)|_K -  R_j \bigr| > 4\epsilon_m \text{ for all $(z,t) \in \mathbb Z^{2n}\times\mathbb Z$ and } 1 \leq j \leq m,  \label{Rice-conditions-1} \\ \label{Rice-conditions-2}   R_m \geq 100  R_{n-1} \quad \text{ and } \quad f( R_{m})\leq \left(\frac{\epsilon_{m-1}}{16}\right)^2.
 \end{align}  

 By \eqref{Rice-conditions-2}, there exists $(z_m,t_m)$ such that 
 \begin{equation*}
     L_m:=\{(z_m,t_m)\cdot (z,t)^{-1}: (z,t) \in \mathbb Z^{2n}_{\lfloor \frac{ R_m}4\rfloor}\times Z_{\lfloor \frac{ R_m}4\rfloor}\} \subset B\left(0,{\textstyle\frac{ R_m}{2} }\right)\backslash B(0,10 R_{m-1})
 \end{equation*}
Consider $P_m=\{(z,t)\in \mathbb H^n: |(z,t)\cdot (w_m,u_m)^{-1} |_K<\frac14\epsilon_{m-1} \text{ for some }(w_m,u_m)\in L_m\}$ and define $A=\cup_mP_m$. It is easy to see that $$|A\cap B(0, R_m)|\geq f(R_m) \, |B(0, R_m)|.$$

Suppose $A\cap B(0, R_{m-1})$ has no $R_j$ distances for all $j$. By the definition of $P_m$ we have 
\begin{align*}
    d_K((z',t'),(w,u)) \geq  9 R_{m-1},\ \forall (z',t')\in A\cap B(0,  R_{m-1})\text{ and }(w,u)\in P_m
\end{align*}
If there exists $(w_1,u_1),(w_2,u_2)\in P_m$ such that $|(w_1,u_1)\cdot(w_2,u_2)^{-1} |_{K} =  R_j$ for some $1\leq j\leq m-1$ then we have $R_j\leq \frac{\epsilon_{m-1}}2+10 R_{m-1}$, further giving a contradiction to \eqref{Rice-conditions-1}.

\section*{Acknowledgments} 
We thank Malabika Pramanik for helpful suggestions.

\newcommand{\etalchar}[1]{$^{#1}$}
\providecommand{\bysame}{\leavevmode\hbox to3em{\hrulefill}\thinspace}
\providecommand{\MR}{\relax\ifhmode\unskip\space\fi MR }
\providecommand{\MRhref}[2]{%
  \href{http://www.ams.org/mathscinet-getitem?mr=#1}{#2}
}
\providecommand{\href}[2]{#2}

\end{document}